\newtheorem{theorem}{Theorem}[section]
\newtheorem{corollary}[theorem]{Corollary}
\newtheorem{proposition}[theorem]{Proposition}
\newtheorem{lemma}[theorem]{Lemma}
\theoremstyle{definition}
\newtheorem{remark}[theorem]{Remark}
\def\irr{\mbox{\rm Irr}}
\def\C{{\mathbb C}}
\begin{document}

\overfullrule=0pt
% \magnification\magstep1

\title{The  depth of subgroups of Suzuki groups}

\author{L.~H\'ethelyi}
\address{\'Obuda University, H-1034 Budapest, B\'ecsi \'ut 96/B.}
\email{fobaba@t-online.hu}

\author{E.~Horv\'ath}
\address{Department of Algebra, Budapest University of Technology and Economics, H-1521 Budapest, M\H uegyetem rkp. 3--9.}
\email{he@math.bme.hu}

\author{F.~Pet\'enyi}
\address{Department of Algebra, Budapest University of Technology and Economics,
 H-1521 Budapest, M\H uegyetem kp. 3--9.}
\email{petenyi.franciska@gmail.com}
%%%%%%%%%%%%%%%%%%%%%%%%%%%%%%%%%%%%%%%%%%%%%%%%%%%%%%%%%%%%%%%%%%%%%%%%%%%%%%%%
\abstract{% 
We determine the combinatorial depth of certain subgroups of simple Suzuki
 groups $Sz(q)$,
among others,
 the depth of their maximal subgroups. We apply these results to determine
the ordinary depth of these subgroups.
}
}

\maketitle
%%%%%%%%%%%%%%%%%%%%%%%%%%%%%%%%%%%%%%%%%%%%%%%%%%%%%%%%%%%%%%%%%%%%%%%%%%%%%%%%%

\section{The combinatorial depth and ordinary depth}

In this paper we shall study the  depth of subgroups of Suzuki groups.
The notion of combinatorial depth was defined in \cite{BDK}. The related concept
of ordinary depth has its origins in von-Neumann algebras, see \cite{GHJ}.

 First we remind the reader to some of the basic definitions,
 results and notations  on combinatorial depth, see \cite{BDK}.\\

Let $G$ be a finite group, $H$ a subgroup of $G$.
To define the combinatorial depth of $H$ in $G$,  we need some facts about bisets.

 Let $J,K,L$ be finite groups, let  $X$ be  a $(J,K)$-biset, let $Y$ be a $(K,L)$-biset. Then $X\times Y $ will be a $(J,L)$-biset. The group $K$ also acts on $X\times Y$
by $k\cdot (x,y):=(xk^{-1},ky)$ for $x\in X,y\in Y, k\in K$.
The set of $K$-orbits of this action is denoted by $X\times_K Y$.
This set also inherits a $(J,L)$-biset structure. Let 
$\Theta_1(H,G)$ be the
$(G,G)$-biset $G$, using left and right multiplication of $G$ on itself.
Let $\Theta_{i+1}(H,G):=\Theta_i(H,G)\times_H G$ for $i\geq 1$.
We denote by $\Theta'_i(H,G)$ the set $\Theta_i(H,G)$ considered as a $(H,H)$-biset.
We denote by $\Theta^{l}_i(H,G)$ and $\Theta^{r}_i(H,G)$, the set $\Theta_i(H,G)$
considered as a 
 $(H,G)$-biset or $(G,H)$-biset, respectively. 
 Furthermore, $\Theta_0'(H,G):=H$, as a 
$(H,H)$-biset.
The subgroup $H$ is said to be of combinatorial depth $2i$ in $G$ if there exist
 natural numbers $a_1,a_2$ such that  $\Theta^r_{i+1}(H,G)$  
 is a direct summand of $a_1\cdot \Theta^r_i(H,G)$ 
for some $i\geq 1$ and $\Theta^l_{i+1}$ is a direct summand of $a_2\cdot \Theta^l_{i}(H,G)$, respectively.
Moreover $H$ has combinatorial depth $2i+1$ if ${\Theta^{'}}_{i+1}(H,G)$ is a direct
summand of $a \cdot {\Theta^{'}}_i(H,G)$ for some natural number $a$.
The minimal combinatorial depth $d_c(H,G)$ is the smallest positive integer  $i$ 
such that
$H$ has combinatorial depth $i$ in $G$. This  number is well defined.

 Let us denote by
 $H^x:=x^{-1}Hx$ and $H_{x_1,\ldots ,x_n}:=H\cap H^{x_1}\cap \ldots \cap H^{x_n}$, for
$x_1,\ldots ,x_n\in G$.
Let  ${\mathcal{U}}_i:={\mathcal{U}}_i(H,G):=\{ H_{x_1,\ldots x_i} : x_1,\ldots x_i \in G \}$  and ${\mathcal{U}}_{\infty}:={\mathcal{U}}_{\infty}(H,G):=
\cup_{i\geq 0}{\mathcal{U}}_i$, where ${\mathcal{U}}_0:=\{H\}$.
In \cite{BDK} the following characterization of combinatorial depth $d_c(H,G)$ 
is proved:\\

\noindent
\begin{theorem}\cite[Thm~3.9]{BDK}\label{BDK3.9}
Let $H$ be a subgroup of the finite group $G$ and let  $i\geq 1$.
Then:
\begin{itemize}
\item [(i)] $d_c(H,G)\leq 2i$ $\leftrightarrow$  ${\mathcal{U}}_{i-1}={\mathcal{U}}_i$
$\leftrightarrow$ ${\mathcal{U}}_{i-1}={\mathcal{U}}_{\infty}$ $\leftrightarrow$  for any
$x_1,\ldots ,x_i\in G$, there exist $y_1,\ldots ,y_{i-1}\in G$ with
$H_{x_1,\ldots ,x_i}=H_{y_1,\ldots ,y_{i-1}}$.\\
\item [(ii)] Let $i>1$. Then  $d_c(H,G)\leq 2i-1 $ $\leftrightarrow $ for any $x_1,\ldots x_i \in G$
there exist $y_1,\ldots y_{i-1}\in G$ with 
$H_{x_1,\ldots x_i}=H_{y_1,\ldots ,y_{i-1}}$ and $x_1hx_1^{-1}=y_1hy_1^{-1}$ for all
$h\in H_{x_1,\ldots x_i}$.\\
\item [(iii)] $d_c(H,G)=1$ $\leftrightarrow $  for any $x\in G$ there exists a $y\in H$ with
$xhx^{-1}=yhy^{-1}$ for  all $h\in H$ $\leftrightarrow$ $G=HC_G(H)$.
\end{itemize}
\end{theorem}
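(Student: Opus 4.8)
I would prove the characterization by passing through the biset formalism and reducing everything to a bookkeeping of transitive constituents.

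The plan is first to record the elementary translation. A $(J,K)$-biset is the same datum as a finite $(J\times K)$-set, and for finite $(J\times K)$-sets $X,Y$ one has that $X$ is isomorphic to a direct summand of $a\cdot Y$ for some $a\ge 1$ if and only if every transitive constituent of $X$ is isomorphic to a constituent of $Y$; equivalently, the set of conjugacy classes of point stabilizers occurring in $X$ is contained in that occurring in $Y$ (take $a$ large). Applying this, the three clauses of the theorem become assertions comparing the transitive constituents of $\Theta_i$ and $\Theta_{i+1}$ in the $(G,H)$-, $(H,G)$-, resp.\ $(H,H)$-form. So the real content is to compute those constituents, which I would do by induction on $i$ from the recursion $\Theta_{i+1}=\Theta_i\times_H G$ together with the standard Mackey/composition and restriction formulas for transitive bisets (as in \cite{BDK}).

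Concretely I would establish: (a) as a $(G,H)$-biset, $\Theta^r_i$ has free right $H$-action, so each transitive constituent is $(G\times H)/\Delta_\beta(L)$ with $L\le H$ and $\beta\colon L\to G$ a monomorphism which is \emph{conjugation by some element of $G$}; such a twist is killed by conjugating with that element (inverted) in the left factor $G$, so the isomorphism type of the constituent depends only on the $H$-conjugacy class of $L$. (b) As the constituents range, $L$ runs exactly through $\mathcal{U}_{i-1}$: the inductive step is that restricting the right $G$-action of $\Theta_{i+1}=\Theta_i\times_H G$ back to $H$ and decomposing à la Mackey replaces a constituent with subgroup $L=H_{x_1,\dots,x_{i-1}}$ by the constituents with subgroups $L\cap H^{t}=H_{x_1,\dots,x_{i-1},t}$, $t\in G$, i.e.\ precisely the members of $\mathcal{U}_i$. (c) The map reversing the $i$ factors and inverting each of them is an anti-isomorphism $\Theta^l_i\cong(\Theta^r_i)^{\mathrm{op}}$, so the $(H,G)$-side produces the same family $\mathcal{U}_{i-1}$. (d) Viewing $\Theta'_i$ as an $(H,H)$-biset means restricting the \emph{left} action of $\Theta^r_i$ from $G$ to $H$; a further Mackey decomposition splits each constituent according to double cosets in $H\backslash G/H$, the new subgroups are again members of $\mathcal{U}_i$, but now the twist $\beta$ can only be normalized up to left and right multiplication by $H$, so each constituent of $\Theta'_i$ retains, besides a subgroup in $\mathcal{U}_i$, the extra datum of how the ``first'' conjugator acts on that subgroup.

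With this in hand, part (i) follows: $\Theta^r_{i+1}\mid a_1\Theta^r_i$ and $\Theta^l_{i+1}\mid a_2\Theta^l_i$ together say $\mathcal{U}_i\subseteq\mathcal{U}_{i-1}$; since always $\mathcal{U}_{i-1}\subseteq\mathcal{U}_i$ (append $x_i=1$) and every $\mathcal{U}_j$ is closed under $H$-conjugacy, this is equivalent to $\mathcal{U}_{i-1}=\mathcal{U}_i$. A one-line substitution ($H_{x_1,\dots,x_{i+1}}=H_{x_1,\dots,x_i}\cap H^{x_{i+1}}$, with $H_{x_1,\dots,x_i}$ rewritten by the hypothesis) shows $\mathcal{U}_{i-1}=\mathcal{U}_i$ forces $\mathcal{U}_j=\mathcal{U}_{i-1}$ for all $j\ge i-1$, hence $\mathcal{U}_{i-1}=\mathcal{U}_\infty$, and the last equivalent form is the set-theoretic unfolding of $\mathcal{U}_i\subseteq\mathcal{U}_{i-1}$. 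For part (ii), $\Theta'_i\mid a\Theta'_{i-1}$ says every constituent of $\Theta'_i$, carrying a subgroup $H_{x_1,\dots,x_i}\in\mathcal{U}_i$ together with the first‑conjugator datum recorded by $x_1$, is isomorphic to a constituent of $\Theta'_{i-1}$, carrying a subgroup $H_{y_1,\dots,y_{i-1}}\in\mathcal{U}_{i-1}$ and a datum recorded by $y_1$; unwinding the isomorphism and using the freedom in choosing the $y_j$ turns this into exactly: for all $x_1,\dots,x_i$ there exist $y_1,\dots,y_{i-1}$ with $H_{x_1,\dots,x_i}=H_{y_1,\dots,y_{i-1}}$ and $x_1hx_1^{-1}=y_1hy_1^{-1}$ for all $h$ in that subgroup. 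Finally, part (iii) is the base case: $\Theta'_1=G$ and $\Theta'_0=H$ as $(H,H)$-bisets, and $G\mid a\cdot H$ holds iff every double coset $HxH$, as an $(H,H)$-biset, is isomorphic to the regular biset $H$; a direct check shows this holds iff $H^x=H$ and conjugation by $x$ is trivial on $H$, i.e.\ $x\in HC_G(H)$, so the condition over all $x$ is $G=HC_G(H)$, which in turn is equivalent to: for each $x$ there is $y\in H$ with $xhx^{-1}=yhy^{-1}$ for all $h\in H$. I expect the main obstacle to be the Mackey bookkeeping of step (d): tracking the twist $\beta$ through the amalgamation and the two restrictions, and checking that on the $(H,H)$-side it degenerates to precisely the conjugation datum responsible for the extra clause distinguishing (ii) from (i). Once that is settled, everything else is formal or elementary.
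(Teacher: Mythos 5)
The paper gives no proof of this statement at all — it is quoted verbatim from \cite[Thm.~3.9]{BDK} — so there is no in-paper argument to compare against; what you have written is essentially a reconstruction of the proof in that reference, namely computing the transitive constituents of $\Theta_i$ as quotients of $G\times H$ (resp.\ $H\times H$) by twisted diagonal subgroups, identifying the subgroups that occur with $\mathcal{U}_{i-1}$ (resp.\ $\mathcal{U}_i$ together with the first-conjugator twist) by Mackey decomposition, and then translating ``direct summand of $a\cdot(-)$'' into containment of constituent types. Your reductions in (i)--(iii) from the constituent data are correct, as is the observation that $\mathcal{U}_{i-1}=\mathcal{U}_i$ propagates to $\mathcal{U}_\infty$ by the substitution $H_{x_1,\ldots,x_{i+1}}=H_{x_1,\ldots,x_i}\cap H^{x_{i+1}}$. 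One small slip in (iii): the double coset $HxH$ is isomorphic to the regular $(H,H)$-biset iff $H^x=H$ and conjugation by $x$ restricted to $H$ is \emph{inner}, i.e.\ agrees with conjugation by some $y\in H$ — not trivial — but the condition $x\in HC_G(H)$ that you then state is the correct one, so the conclusion is unaffected.
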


In \cite{BDK}  the {\it depth of a partially ordered set} $X$ is introduced,
 as the length of a largest chain in $X$, and $\delta $ is defined as the depth
of $\mathcal{U}_{\infty}$$(H,G)$. The symbol $\delta_*$  denotes the smallest positive integer $k$
such that $Core_G(H)$ can be written as the intersection of $k$ conjugates of $H$,
and $\delta ^*$ denotes the smallest positive  integer $k$ such that the intersection
of any $k$ distinct conjugates of $H$ is equal to $Core_G(H)$.

\noindent
\begin{theorem}\cite[Thm~3.11]{BDK}\label{BDK3.11}
Let $G$ be a finite group, $H$ a subgroup of $G$ and $K:=Core_G(H)$.Then:
\begin{itemize}
\item [(a)]$ 2\delta_{*}-1\leq d_c(H,G)\leq 2\delta $
\item [(b)] $\delta_{*}\leq \delta \leq \delta^{*}\leq |G:N_G(H)|$
\item [(c)] If $\delta_{*}=\delta $ and $K\leq Z(G)$ then $d_c(H,G)=2\delta -1$.
\end{itemize}
\end{theorem}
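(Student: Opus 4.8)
\medskip
\noindent\textbf{Proof plan.}
The plan is to deduce everything from Theorem~\ref{BDK3.9} together with a few elementary observations about the poset $\mathcal{U}_\infty=\mathcal{U}_\infty(H,G)$ ordered by inclusion; for $U\in\mathcal{U}_\infty$ write $h(U):=\min\{\,j\ge 0:U\in\mathcal{U}_j\,\}$. The first (and key) step is to show that a shortest representation of $U$ has strictly descending partial intersections: if $h(U)=j\ge 1$ and $U=H_{x_1,\ldots,x_j}$, then
\[
H\supset H_{x_1}\supset H_{x_1,x_2}\supset\cdots\supset H_{x_1,\ldots,x_j}=U
\]
is strictly descending. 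Indeed, an equality $H_{x_1,\ldots,x_{r-1}}=H_{x_1,\ldots,x_r}$ would give $H_{x_1,\ldots,x_{r-1}}\le H^{x_r}$, so the factor $H^{x_r}$ could be omitted and $U$ would lie in $\mathcal{U}_{j-1}$, against $h(U)=j$. Thus $\mathcal{U}_\infty$ contains a chain of $h(U)+1$ members descending from $H$ to $U$, so $\delta\ge h(U)+1$ for every $U$; equivalently $h(U)\le\delta-1$ for all $U$, that is, $\mathcal{U}_{\delta-1}=\mathcal{U}_\infty$. I would also record two trivialities: $K:=Core_G(H)$ is the least element of $\mathcal{U}_\infty$ (it is contained in every conjugate of $H$, hence in every member of $\mathcal{U}_\infty$, and it is itself such a member), and $h(K)=\delta_*-1$ (writing $K$ as an intersection of $\delta_*$ conjugates and conjugating so that $H$ itself occurs on the list shows $K\in\mathcal{U}_{\delta_*-1}$, while $K\in\mathcal{U}_j$ exhibits $K$ as an intersection of $j+1$ conjugates, so $j\ge\delta_*-1$).

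With this, (a) is immediate from Theorem~\ref{BDK3.9}(i): $\mathcal{U}_{\delta-1}=\mathcal{U}_\infty$ gives $d_c(H,G)\le 2\delta$, whereas $K\in\mathcal{U}_{\delta_*-1}\setminus\mathcal{U}_{\delta_*-2}$ shows $\mathcal{U}_{\delta_*-2}\ne\mathcal{U}_{\delta_*-1}$ and hence $d_c(H,G)\ge 2\delta_*-1$ (the case $\delta_*=1$ being trivial, since $d_c\ge 1$ always). For (b), the chain statement applied to $U=K$ yields a chain of $h(K)+1=\delta_*$ members from $H$ to $K$, so $\delta_*\le\delta$; and $\delta^*\le|G:N_G(H)|$ because $H$ has exactly $|G:N_G(H)|$ conjugates and their total intersection is $K$. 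For $\delta\le\delta^*$ I would show that every maximal (unrefinable) chain $H=U_0\supset U_1\supset\cdots\supset U_r$ in $\mathcal{U}_\infty$ has $r+1\le\delta^*$: for each $i$ choose a conjugate $H^{g_i}$ not containing $U_{i-1}$; then $U_{i-1}\cap H^{g_i}$ is a member of $\mathcal{U}_\infty$ lying properly between $U_{i-1}$ and $U_i$ unless it equals $U_i$, so maximality forces $U_i=U_{i-1}\cap H^{g_i}=H\cap H^{g_1}\cap\cdots\cap H^{g_i}$. The conjugates $H,H^{g_1},\ldots,H^{g_r}$ are pairwise distinct (a coincidence would make a step non-strict, as above), so if $r\ge\delta^*$ the first $\delta^*$ of them already intersect to $K$, forcing $U_{\delta^*-1}=K$, which contradicts the strict descent below $U_{\delta^*-1}$. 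As every chain extends to a maximal one, $\delta\le\delta^*$.

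For (c), assume $\delta_*=\delta$ and $K\le Z(G)$. By (a) we have $2\delta-1\le d_c(H,G)\le 2\delta$, so it remains to prove $d_c(H,G)\le 2\delta-1$. If $\delta=1$ this is direct: then $\mathcal{U}_\infty=\{H\}$, so $K=H\le Z(G)$, hence $G=HC_G(H)$ and $d_c(H,G)=1$ by Theorem~\ref{BDK3.9}(iii). If $\delta\ge 2$ I apply Theorem~\ref{BDK3.9}(ii) with $i=\delta$. Given $x_1,\ldots,x_\delta\in G$, put $U:=H_{x_1,\ldots,x_\delta}$; then $h(U)\le\delta-1$, and I split into two cases. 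If $h(U)\le\delta-2$, write $U=H_{w_1,\ldots,w_{\delta-2}}$; since $U\le H^{x_1}$ one gets $U=H_{x_1,w_1,\ldots,w_{\delta-2}}$, so we may take $y_1:=x_1$ and the relation $x_1hx_1^{-1}=y_1hy_1^{-1}$ holds trivially. If $h(U)=\delta-1$, then by the first paragraph $\mathcal{U}_\infty$ has a chain of $\delta$ members from $H$ to $U$; being of maximal length it must end at the least element $K$, so $U=K$, and now centrality of $K$ makes $x_1hx_1^{-1}=h=y_1hy_1^{-1}$ for every $h\in K$ and any $y_1$, while $K=H_{y_1,\ldots,y_{\delta-1}}$ for a suitable choice of the $y_i$ since $K\in\mathcal{U}_{\delta-1}$ (here $h(K)=\delta_*-1=\delta-1$ uses $\delta_*=\delta$). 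Thus $d_c(H,G)\le 2\delta-1$, so that $d_c(H,G)=2\delta-1$.

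I expect the main obstacle to be the step in the first paragraph — proving that a shortest representation automatically has strictly descending partial intersections, and extracting from it the identity $\mathcal{U}_{\delta-1}=\mathcal{U}_\infty$; everything afterwards is essentially bookkeeping with Theorem~\ref{BDK3.9}. A secondary delicate point is the dichotomy $h(U)\le\delta-2$ versus $h(U)=\delta-1$ in (c), together with the fact that the latter forces $U=Core_G(H)$ — which is exactly what lets the hypothesis $K\le Z(G)$ do its work.
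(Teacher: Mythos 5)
This theorem is imported from \cite{BDK} and the paper gives no proof of its own, so there is nothing internal to compare against; judged on its own terms, your argument is correct and follows essentially the route taken in \cite{BDK}, namely reducing everything to Theorem~\ref{BDK3.9} via the poset $\mathcal{U}_\infty$. The key lemma (a shortest representation $U=H_{x_1,\ldots,x_j}$ with $j=h(U)$ has strictly descending partial intersections, whence $\mathcal{U}_{\delta-1}=\mathcal{U}_\infty$ and $h(K)=\delta_*-1$), the unrefinable-chain argument for $\delta\le\delta^*$, and the dichotomy $h(U)\le\delta-2$ versus $U=K$ in part (c) are all sound and consistent with the convention, used throughout the paper, that the depth $\delta$ of a poset counts the elements of a longest chain.
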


Now let us remind the reader to the notion of \emph{ordinary depth} of a subgroup
 $H$ in the finite group $G$.
We say that the {\em depth of the group algebra inclusion } ${\C}H \subseteq {\C}G$ is $2n$
if ${\C}G\otimes_{{\C}H}\cdots\otimes_{{\C}H}{{\C}G}$ ($n+1$-times ${\C}G$)
 is isomorphic to a direct summand of 
$\oplus_{i=1}^a{\C}G\otimes_{{\C}H}\cdots \otimes_{{\C}H}{\C}G$ ($n$ times ${\C}G$)
as ${\C}G-{\C}H$-bimodules for some positive integer $a$.

Furthermore ${\C}H$ is said to have depth $2n+1$ in ${\C}G$ if the same assertion holds for  ${\C}H-{\C}H$-bimodules. Finally ${\C}H$ has depth $1$ in ${\C}G$ if
${\C}G$ is isomorphis to a direct summand of $\oplus_{i=1}^a{\C}H$ as
 ${\C}H-{\C}H$ bimodules.
The minimal depth of group algebra inclusion ${\C}H\subseteq {\C}G$ is called
(minimal) ordinary depth of $H$ in $G$, which we denote by $d(H,G)$.
This is well defined.

 %This is the minimal natural number $n$, such that
 %${\C}G\otimes_{{\C}H}\cdots\otimes_{{\C}H}{\C}G$ (the $n+1$-fold tensor product of the group algebra ${\C}G$ over ${\C}H$)
%is a direct summand  for some $a$ of $\oplus_{i=1}^a{\C}G\otimes_{{\C}H}\cdots\otimes_{{\C}H}{\C}G$  
%(direct power of the $n$-fold tensor product of ${\C}G$ over ${\C}H$) as ${\C}H$-${\C}H$-bimodule or
%${\C}G$-${\C}H$-bimodule, respectively. The ordinary depth of $H$ in $G$
%will be denoted by $d(H,G)$. 
 This depth can be obtained from
the so called inclusion matrix $M$. If $\chi_1,\ldots ,\chi_s$ are
all irreducible characters of $G$ and $\psi_1,\ldots ,\psi_r$ are all irreducible characters of $H$, then $m_{i,j}:=(\psi_i^G,\chi_j)$.
The "powers" of $M$ are defined by $M^{2l}:=M^{2l-1}M^T$ and $M^{2l+1}:=M^{2l}M$
The odinary depth $d(H,G)$ can be obtained as the smallest integer $n$
such that $M^{n+1}\leq aM^{n-1}$. This is well defined.
The  results on characters in \cite{BKK}   help
to determine $d(H,G)$. Two irreducible characters $\alpha,\beta \in \irr(H)$ are related,
$\alpha \sim \beta$, if they are constituents of some $\chi_H$, for 
 $\chi \in \irr(G)$. The distance $d(\alpha,\beta)=m$
is the smallest integer $m$ such that there is a chain of irreducible
 characters of $H$ such that $\alpha=\psi_0\sim \psi_1\ldots \sim
\psi_m=\beta $. If there is no such chain then 
$d(\alpha,\beta)=-\infty $ and if $\alpha=\beta $ then the distance is zero.
 If $X$ is the set of irreducible constituents of $\chi_H$ then 
$m(\chi):=\max_{\alpha \in \irr(H)}\min_{\psi \in  X} d(\alpha,\psi)$.
The following results from \cite{BKK}
will be useful.

\begin{theorem}\cite[Thm~3.9, Thm~ 3.13]{BKK}\label{distance}
\begin{itemize}
\item[(i)] Let $m\geq 1$. Then $H$ has ordinary depth $\leq 2m+1$ in $G$
if and only if the distance between two irreducible characters of $H$ is at most $m$.
\item[ (ii)] Let $m\geq 2$. Then $H$ has ordinary depth $\leq 2m$ in $G$
if and only if $m(\chi)\leq m-1$ for all $\chi \in Irr(G)$.
\end{itemize}
\end{theorem}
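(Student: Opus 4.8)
The plan is to work throughout with the inclusion matrix $M$, re-indexing its rows by $\psi\in\irr(H)$ and its columns by $\chi\in\irr(G)$, so that $M_{\psi,\chi}=(\psi^G,\chi)=(\psi,\chi_H)$ by Frobenius reciprocity, and to read off both equivalences from the combinatorics of the supports of the powers of $M$. First, setting $A:=MM^T$, the recursion $M^{2l}=M^{2l-1}M^T$, $M^{2l+1}=M^{2l}M$ gives $M^{2l}=A^l$ and $M^{2l+1}=A^lM$ for $l\ge1$. Second, every entry of $M$, of $A$, and of all their products is a non-negative integer, and for two such matrices $N,N'$ one has $N\le aN'$ for some constant $a$ exactly when $\mathrm{supp}(N)\subseteq\mathrm{supp}(N')$; hence the inequalities defining ordinary depth become \emph{support containments}. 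Third, $A_{\psi,\psi'}=(\psi^G,(\psi')^G)_G=\sum_{\chi\in\irr(G)}(\psi,\chi_H)(\psi',\chi_H)$, which is nonzero if and only if $\psi=\psi'$ or $\psi\sim\psi'$; thus $A$ is the adjacency matrix, with all loops present, of the graph on $\irr(H)$ whose edge relation is $\sim$. The crucial computation, proved by an easy induction using non-negativity of entries and reading matrix multiplication as concatenation of walks in this graph, is
\[
(A^l)_{\psi,\psi'}\neq0 \iff d(\psi,\psi')\le l,\qquad (A^lM)_{\psi,\chi}\neq0 \iff \min_{\psi'\in X}d(\psi,\psi')\le l,
\]
where $X=X(\chi)$ is the set of irreducible constituents of $\chi_H$.

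With these preliminaries, part (i) is the odd-index case. By definition $H$ has ordinary depth $\le 2m+1$ iff $M^{2m+2}\le aM^{2m}$, i.e.\ $\mathrm{supp}(A^{m+1})\subseteq\mathrm{supp}(A^m)$. Since $A$ has strictly positive diagonal, the reverse containment $\mathrm{supp}(A^m)\subseteq\mathrm{supp}(A^{m+1})$ always holds, so this is an equality, and by the first displayed equivalence it says precisely that no two irreducible characters of $H$ lie at finite distance $m+1$; as the finite distances realized within a connected component of the $\sim$-graph form an interval $[0,\mathrm{diam}]$, this amounts to $d(\psi,\psi')\le m$ for all $\psi,\psi'\in\irr(H)$. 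A short additional remark — that $\mathrm{supp}(NN')$ depends only on $\mathrm{supp}(N)$ and $\mathrm{supp}(N')$, so that once $\mathrm{supp}(A^{m+1})=\mathrm{supp}(A^m)$ holds it holds with $m$ replaced by any larger integer — shows the minimal such $m$ is well defined, legitimising the ``$\le$'' phrasing.

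Part (ii) is the even-index case and runs in parallel: $H$ has ordinary depth $\le 2m$ iff $M^{2m+1}\le aM^{2m-1}$, i.e.\ $\mathrm{supp}(A^mM)\subseteq\mathrm{supp}(A^{m-1}M)$, again an equality because $A$ has loops. By the second displayed equivalence this reads: for every $\chi\in\irr(G)$ and every $\psi\in\irr(H)$, if $\min_{\psi'\in X(\chi)}d(\psi,\psi')\le m$ then in fact $\le m-1$; taking the maximum over $\psi$, this is exactly $m(\chi)\le m-1$ for all $\chi\in\irr(G)$, as asserted. The points requiring genuine care are: the non-negativity reduction (it is essential that $a$ is a single scalar and all entries are $\ge0$, so that ``$\le$'' is really a statement about zero-patterns); the stabilisation argument that makes the minimal depth well defined; the bookkeeping of the value $-\infty$ when the $\sim$-graph is disconnected; and the low cases $m=0$ in (i) and $m=1$ in (ii), where $M^0$ and the degenerate inequalities lie outside the support calculus and must be handled by hand — these boundary cases are where I expect the main obstacle to lie.
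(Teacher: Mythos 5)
This is a background result quoted from [BKK, Thms.\ 3.9 and 3.13]; the paper gives no proof of it, so there is nothing internal to compare against. Your reconstruction via the inclusion matrix is exactly the standard argument of the cited reference and is essentially correct: $M^{2l}=A^{l}$ and $M^{2l+1}=A^{l}M$ with $A=MM^{T}$ the loop-augmented adjacency matrix of the relation $\sim$ on $\irr(H)$, the inequalities $M^{n+1}\leq aM^{n-1}$ reduce to support containments because all entries are non-negative integers, and the positive diagonal of $A$ upgrades the containments to equalities of supports, which the walk-counting identities translate into the stated distance conditions. Two small points: the intermediate-value argument (shortest paths realize every smaller distance) is needed in part (ii) as well as in part (i) -- a vertex at distance $>m$ from $X(\chi)$ would force one at distance exactly $m$, which is what rules out the apparent gap between ``no $\psi$ at distance exactly $m$ from $X(\chi)$'' and ``$m(\chi)\leq m-1$'' -- and you state it only for (i); and the boundary cases you flag as the likely obstacle ($m=0$ in (i), $m=1$ in (ii)) are excluded by the hypotheses $m\geq 1$ and $m\geq 2$, which are imposed precisely so that the exponent $n-1$ in $M^{n-1}$ stays within the support calculus, so no hand treatment is required.
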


\begin{theorem}\cite[Thm~6.9]{BKK}\label{inter}
Suppose that  $H$ is a subgroup of a finite group $G$ and $N=Core_G(H)$
is the intersection  of $m$ conjugates of $H$. Then $H$ has ordinary depth
$\leq 2m$ in $G$. If $N\leq Z(G)$ then $d(H,G)\leq 2m-1$.
\end{theorem}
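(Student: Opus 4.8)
\medskip\noindent\textbf{Proof strategy.}
The plan is to deduce both bounds from the character-theoretic criterion of Theorem~\ref{distance}; arguing through combinatorial depth would only give the in general weaker estimate $d(H,G)\le 2\delta$. Write $N=Core_G(H)$. Since $N$ is normal in $G$ we may conjugate the given $m$ conjugates simultaneously and so assume $N=\bigcap_{i=1}^{m}H^{x_i}$ with $x_1=1$. If $m=1$ then $N=H^{x}$ for a single $x$, so $H=N^{x^{-1}}$ is normal in $G$; then $\mathcal U_0(H,G)=\mathcal U_1(H,G)$, so $d_c(H,G)\le 2$ by Theorem~\ref{BDK3.9}(i), hence $d(H,G)\le 2=2m$ since ordinary depth is bounded above by combinatorial depth \cite{BDK}; and if in addition $N=H\le Z(G)$ then $G=HC_G(H)$, so $d_c(H,G)=1$ by Theorem~\ref{BDK3.9}(iii) and $d(H,G)=1=2m-1$. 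So assume $m\ge 2$. It then suffices to prove: (i) $m(\chi)\le m-1$ for every $\chi\in\irr(G)$, which gives $d(H,G)\le 2m$ by Theorem~\ref{distance}(ii); and (ii) if $N\le Z(G)$, then $d(\alpha,\beta)\le m-1$ for all $\alpha,\beta\in\irr(H)$, which gives $d(H,G)\le 2m-1$ by Theorem~\ref{distance}(i).

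The technical core is the following reformulation of distance, obtained by iterating Mackey's formula for $\mathrm{Res}^{G}_{H}\circ\mathrm{Ind}_{H}^{G}$ and applying Frobenius reciprocity: for $\alpha,\psi\in\irr(H)$ and $k\ge 0$, one has $d(\alpha,\psi)\le k$ if and only if there exist $c_1,\dots,c_k\in G$ (put $c_0=1$) such that $\psi_U$ and $({}^{c_k}\alpha)_U$ have a common irreducible constituent, where $U:=\bigcap_{i=0}^{k}c_iHc_i^{-1}$. We use this with $k=m-1$ and with $(c_1,\dots,c_{m-1})$ an arbitrary ordering of $(x_2^{-1},\dots,x_m^{-1})$: since intersection is order-independent, $U=H\cap\bigcap_{i=2}^{m}H^{x_i}=N$, while $c_{m-1}$ may be prescribed to be $x_j^{-1}$ for any chosen $j\in\{2,\dots,m\}$. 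Thus, for every such $j$,
\[
d(\alpha,\psi)\le m-1 \qquad\text{whenever}\qquad \psi_N \text{ and } ({}^{x_j^{-1}}\alpha)_N \text{ have a common constituent.}
\]
Since $N\trianglelefteq G$, if $\alpha$ lies over $\theta\in\irr(N)$ then $({}^{x_j^{-1}}\alpha)_N$ is, up to a positive multiple, a $G$-conjugate (by $x_j^{-1}$) of the orbit sum $\alpha_N$, and in particular it contains the character ${}^{x_j^{-1}}\theta$, which is $G$-conjugate to $\theta$.

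Now fix $\chi\in\irr(G)$. By Clifford's theorem $\chi_N$ is a positive multiple of the sum over a single $G$-orbit $\mathcal O\subseteq\irr(N)$, and since $(\chi_H)_N=\chi_N$, every member of $\mathcal O$ lies below some constituent of $\chi_H$; in particular $X(\chi)$ contains a constituent lying over every element of $\mathcal O$. Let $\alpha\in\irr(H)$. If $\alpha$ lies over some $\theta\in\mathcal O$, fix $j\in\{2,\dots,m\}$ and pick $\psi\in X(\chi)$ lying over ${}^{x_j^{-1}}\theta\in\mathcal O$; then $\psi_N$ and $({}^{x_j^{-1}}\alpha)_N$ both contain ${}^{x_j^{-1}}\theta$, so $d(\alpha,\psi)\le m-1$ by the displayed implication. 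If $\alpha$ lies over a $G$-orbit other than $\mathcal O$, then $d(\alpha,\psi)=-\infty$ for all $\psi\in X(\chi)$, because $\sim$ preserves the $G$-orbit in $\irr(N)$ lying below a character of $H$. Either way $\min_{\psi\in X(\chi)}d(\alpha,\psi)\le m-1$, so $m(\chi)\le m-1$; this is (i). For (ii): if $N\le Z(G)$ then $G$ acts trivially on $\irr(N)$, so ${}^{x_j^{-1}}\theta=\theta$; applying the displayed implication with $\psi=\beta$ and $\theta$ a common constituent of $\alpha_N$ and $\beta_N$ gives $d(\alpha,\beta)\le m-1$ whenever $\alpha,\beta$ lie over a common character of $N$, and $d(\alpha,\beta)=-\infty$ otherwise; this is (ii).

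The step I expect to require the most care is the iterated-Mackey reformulation of $d(\alpha,\psi)\le k$ as a statement about a \emph{single} intersection $U$ of $k+1$ conjugates of $H$ together with the compatible twist ${}^{c_k}\alpha$, i.e.\ keeping honest track of which conjugations accumulate on $\alpha$ and which on the successive induced characters. Granting that, everything is driven by the freedom to arrange $U=N$ while letting $c_{m-1}$ range over the $x_j^{-1}$. This same freedom explains why the two bounds differ: in general $X(\chi)$ may contain constituents lying over different $H$-orbits inside one $G$-orbit, and the argument only connects $\alpha$ to a constituent over a (possibly different, though $G$-conjugate) character of $N$, using all $m-1$ steps after an implicit orbit adjustment, so only $m(\chi)\le m-1$ and hence $d(H,G)\le 2m$ is available; when $N$ is central the orbit adjustment is vacuous and the extra step is saved, giving $d(H,G)\le 2m-1$.
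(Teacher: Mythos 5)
The paper does not actually prove Theorem~\ref{inter}: it is imported verbatim from \cite[Thm~6.9]{BKK} and used as a black box, so there is no in-paper argument to compare yours against. Judged on its own, your proof is correct and is a genuine derivation from the character-theoretic criteria (Theorem~\ref{distance}) that the paper does state. The reduction to $x_1=1$ is legitimate precisely because $N=Core_G(H)$ is normal; the $m=1$ case is handled correctly; and the technical core --- that $\bigl\langle(\mathrm{Res}^G_H\,\mathrm{Ind}_H^G)^k\alpha,\psi\bigr\rangle\neq0$ as soon as some choice of $c_1,\dots,c_k$ makes $\psi_U$ and $({}^{c_k}\alpha)_U$ share a constituent, where $U=\bigcap_{i=0}^k c_iHc_i^{-1}$ --- does hold: it follows by induction on $k$ from Mackey's formula $\mathrm{Res}^G_H\,\mathrm{Ind}_U^G\,\theta=\bigoplus_x\mathrm{Ind}_{H\cap xUx^{-1}}^H\bigl(({}^x\theta)\vert\bigr)$, which replaces $U$ by $H\cap xUx^{-1}$ and multiplies the accumulated twist by $x$ on the left, combined with the fact that $(MM^T)_{\gamma\gamma}>0$ for all $\gamma$, so a walk of length $k$ can be shortened to a path and gives $d(\alpha,\psi)\le k$. (You state this as a biconditional but only use the one direction just described, which is the unproblematic one.) The order-independence of the intersection then lets you arrange $U=N$ with a prescribed last twist $x_j^{-1}$, and your Clifford-theoretic bookkeeping --- over the single $G$-orbit below $\chi$ for the bound $2m$, and over the $G$-invariant characters of a central $N$ for the bound $2m-1$ --- is carried out correctly; the $d=-\infty$ cases are consistent with the conventions behind Theorem~\ref{distance}, since unrelated characters never obstruct the inequalities $M^{n+1}\le aM^{n-1}$. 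Your opening remark is also apt: routing through combinatorial depth would only yield $d(H,G)\le d_c(H,G)\le 2\delta$, which is weaker than $2\delta_*$ in general, so the detour through $m(\chi)$ and the distance function is genuinely needed to get the stated bounds. In short: a correct, self-contained proof of a result the paper only cites.
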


Similarly as in the case of combinatorial depth, $d(H,G)\le 2$ if and only if $H$ is normal in $G$. The ordinary depth does not change if we replace the complex number field by any field of characteristic zero.
By \cite[Remark 4.5]{BDK}  the ordinary depth $d(H,G)$ is bounded from above by
the combinatorial depth $d_c(H,G)$.

For papers  computing combinatorial or ordinary
 depth in some other groups, see
 \cite{Frit1},\cite{Frit2}\cite{FKR}, where $PSL(2,q)$ and $S_n$ are considered.

\section{The Suzuki groups}

Let $q$ always denote an odd power of $2$, say $q=2^{2m+1}$, where $m>0$.
The  Suzuki groups can be defined in various ways. They are twisted simple 
groups
of Lie type ${}^2B_2(q)$, see e.g. \cite[p. 43]{CSM}.
(In fact $Sz(2)={}^2B_2(2)$ is also defined, but it is not simple. It is a Frobenius group
of order $20$.)

As permutation groups, the Suzuki groups belong to the class of Zassenhaus groups,
i.e. doubly transitive permutation groups 
without any regular normal subgroups, where any non-identity element has at most
two fixed points, and there are elements with exactly two fixed points.
According to a well-known theorem of Feit, see \cite[Thm. 6.1 in Ch.~XI]{H-Bl_III},
the degree of a Zassenhaus group is always $q_1+1$, where $q_1$ is a prime power.
For even $q_1$,  there are two series of such groups, $PSL(2,2^m)$ and
$Sz(2^{2m+1})$. Suzuki proved that these are all the Zassenhaus groups for even $q_1$.
See e.~g.~ \cite{SuzI,SuzII}.

The Suzuki groups can also be defined  as subgroups of $GL(4,q)$.
See e.g. \cite[Ch XI. p. 182]{H-Bl_III}.

Let $\pi $ be the unique automorphism of the field $K=GF(q)$ with $x\pi^2=x^2$ for
every $x\in K$.

Then $Sz(q):=\langle S(a,b),M(\lambda), T |
a,b\in GF(q), \lambda \in GF(q)^{\times}\rangle$, where

\[S(a,b):=
\left(\begin{array}{rrrr}
1 & 0 & 0 & 0 \\
a & 1 & 0 & 0 \\
b & a\pi& 1& 0 \\
a^2(a\pi) + ab+ b\pi & a(a\pi) + b & a & 1
\end{array}\right)\]

\[M(\lambda):=
\begin{pmatrix} \lambda^{1+2^m} & 0 & 0 & 0\\
	       0 & \lambda^{2^m} & 0 & 0\\
	       0 & 0 & \lambda^{-2^m} & 0\\
	       0 & 0 & 0 & \lambda^{-1-2^m}
\end{pmatrix}\] and

\[T:=
\begin{pmatrix} 0 & 0 & 0 & 1 \\
      0 & 0 & 1 & 0 \\
      0 & 1 & 0 & 0\\
      1 & 0 & 0 & 0
\end{pmatrix}\]

The $q^2+1$ points $\{p_{\infty}, p(x,y)| x,y\in K\}$, where
$p(x,y)=[xy+(x\pi)x^2+y\pi,y,x,1]$,  form an ovoid $\mathcal O$, in the projective space $P(3,q)$.
It means that no $3$ points of $\mathcal O$ are on one line.
The group $Sz(q)$ is the group of all projective collineations on $P(3,q)$ which
leave invariant the set of $q^2+1$ points of this ovoid. This action is faithful and $Sz(q)$ as a permutation group on these points is a Zassenhaus group of order $(q^2+1)q^2(q-1)$.

Let $G=Sz(q)$. Elements which only fix the  point $p_{\infty}$
constitute the nontrivial elements of  a Sylow $2$-subgroup
$F$ of $G$. The pointwise  stabilizer of
the set  $\{p(0,0),p_{\infty}\}$ is a cyclic subgroup $H$ of order $q-1$, which normalizes $F$.

Thus the stabilizer of $p_{\infty}$ is $FH=N_G(F)$ which is a Frobenius group.

The normalizer $N_G(H)$ is a dihedral group of order $2(q-1)$.

We note that the numbers $q^2+1,q^2,q-1$ are pairwise relatively prime.

For odd prime $p$ the Sylow $p$-subgroups of $G$ are cyclic.
$Sz(q)$ is a $CN$-group, i.e.  every
nontrivial element has a nilpotent centralizer.

For the proofs of the above statements, see e.g.
\cite[Ch.~XI p.~182--194]{H-Bl_III}.

The Sylow $2$-subgroup  $F$ of $G$ is a  Suzuki
$2$-group. This means that it is a non-Abelian $2$-group, having 
more than one involution,
and having a solvable group of automorphisms which permutes the set of involutions
of $F$ transitively.

See  \cite[p.~299]{H-Bl_II} for details.
It is a class $2$ group  of order $q^2$ and exponent
$4$. Moreover its center $Z(F)$ is of order $q$.
The involutions in $F$ together   with the identity element constitute
$Z(F)$. 
%Recall that an element $x$ of a finite group is real if it is conjugate
% to its inverse.
%A nontrivial element of $F$ is real if and only if it is an involution.
The subgroup $H$ acts  sharply 1-transitively on the involutions of $F$.
The centralizer in $G$ of every nontrivial element of $F$ is a  subgroup of $F$.
%Suzuki groups are simple and their order is congruent to $2$ mod $3$.

\section{Subgroups of Suzuki groups}

We collect some of the basic facts about the subgroups of Suzuki groups.
For these  and their proofs, see \cite[Thm.~4.12]{Lun}                 
and \cite[Thm~3.10 in Ch.~XI]{H-Bl_III}.

\noindent
\begin{theorem}[Suzuki]\label{Suzuki}
Let $G=Sz(q)$, where $q=2^{2m+1}$, for some positive integer $m$. Then $G$
has the following subgroups:
\begin{itemize}
\item [1.] the Hall subgroup  $N_G(F)=FH$, which is a Frobenius group
of order $q^2(q-1)$.
\item [2.] the dihedral group $B_0=N_G(H)$ of order $2(q-1)$.
\item [3.] the  cyclic Hall subgroups $A_1,A_2$ of orders $q+2r+1,q-2r+1$, respectively, where
$r=2^m$ and $|A_1||A_2|=q^2+1$.
\item [4.] the Frobenius subgroups $B_1=N_G(A_1), B_2=N_G(A_2)$ of orders $4|A_1|,4|A_2|$, respectively.
\item [5.] the  subgroups of form $Sz(s)$, where $s$ is an odd power of $2$, $s\geq 8$, and
$q=s^n$ for some positive integer $n$. Moreover, for every odd $2$-power
$s$, where $s^n=q$ for some positive
integer $n$,
there exists a subgroup isomorphic to $Sz(s)$.  
\item [6.] Subgroups (and their conjugates) of  the above groups.

\end{itemize}
\end{theorem}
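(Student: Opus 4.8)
\medskip
\noindent\textbf{Sketch of a proof.} The plan is to obtain this classical result from the doubly transitive action of $G=Sz(q)$ on the ovoid $\mathcal O$ ($|\mathcal O|=q^2+1$) together with the Sylow and centralizer data recorded in Section~2. Since item~6 of the statement absorbs every subgroup contained in one of the others, it suffices to show that a maximal subgroup $M<G$ is always conjugate to $FH$, to $B_0$, to $B_1$, to $B_2$, or to a subfield subgroup $Sz(s)$ with $q=s^p$ for a prime $p$.

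\smallskip
First I would record the fixed-point dictionary. Since $G$ is a Zassenhaus group, a non-identity element fixes at most two points of $\mathcal O$; comparing orders with the point stabilizer $FH$ (of order $q^2(q-1)$, with $q^2$, $q-1$ and $|\mathcal O|=q^2+1$ pairwise coprime) refines this to: every non-trivial $2$-element fixes exactly one point, every element of order dividing $q-1$ fixes exactly two points, and every element of order dividing $q+2r+1$ or $q-2r+1$ is fixed-point-free. I would also prove the rigidity lemma: \emph{every non-trivial $2$-subgroup of $G$ lies in a unique Sylow $2$-subgroup of $G$}. Such a subgroup contains an involution $t$; the involutions of any Sylow $2$-subgroup lie in its centre, so each Sylow $2$-subgroup containing $t$ lies in $C_G(t)$, which is itself a Sylow $2$-subgroup by Section~2. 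It follows that $C_G(U)=F$ and hence $N_G(U)\le N_G(F)=FH$ whenever $1\neq U\le Z(F)$.

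\smallskip
Now let $M<G$ be maximal. \emph{If $M$ fixes a point} $p$, then $M\le G_p=FH$, so $M=FH$. \emph{If $M$ has even order and fixes no point}, then $M$ contains an involution, and its involutions cannot all fix a common point $p$: otherwise they would all lie in $Z(F)$ for the Sylow $2$-subgroup $F$ fixing $p$, the subgroup $U$ they generate would be a non-trivial normal subgroup of $M$ inside $Z(F)$, and then $M\le N_G(U)\le FH$ would fix $p$. So $M$ has involutions $t,t'$ with distinct fixed points. Then $s:=tt'$ has odd order $>1$: otherwise the central involution $z$ of the dihedral group $\langle t,t'\rangle$ would generate, with $t$ and separately with $t'$, a Klein four $2$-subgroup of $G$, which by the rigidity lemma forces $t$ and $t'$ into a common Sylow $2$-subgroup and hence onto a common fixed point. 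Hence $s$ lies in a conjugate $T$ of one of $H$, $A_1$, $A_2$; since $t$ inverts $s$ and $C_G(s)=T$ (here we use that $G$ is a $CN$-group and each of $H$, $A_1$, $A_2$ is self-centralizing), we get $\langle s,t\rangle\le N_G(T)$, which is a conjugate of $B_0$, $B_1$ or $B_2$. One now separates two subcases. If $M$ normalizes some conjugate of $H$, $A_1$ or $A_2$, then $M$ lies in, hence equals, $B_0$, $B_1$ or $B_2$ (up to conjugacy). Otherwise a more delicate analysis of the $M$-orbits on $\mathcal O$ and of how the dihedral subgroups $\langle t,t'\rangle$ interlock shows that $M$ is doubly transitive on a subovoid of size $s^2+1$ for some odd power $s$ of $2$ dividing $q$; Feit's degree theorem and Suzuki's classification of Zassenhaus groups (Section~2) then force $M\cong Sz(s)$ (the alternative $PSL(2,s^2)$ is excluded by comparing orders with $|G|$), a conjugacy argument puts $M$ in the standard subfield copy, and maximality gives $q=s^p$ with $p$ prime. \emph{If $M$ has odd order and fixes no point}, then every element of $M$ has order dividing exactly one of the pairwise coprime integers $q-1$, $q+2r+1$, $q-2r+1$, and since the associated cyclic Hall subgroups are self-centralizing trivial-intersection sets one shows $M$ is contained in a single conjugate of $H$, $A_1$ or $A_2$, hence in a conjugate of $B_0$, $B_1$ or $B_2$; so $M$ is not maximal and this case does not occur.

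\smallskip
The main obstacle is the even-order, no-fixed-point case, specifically the dichotomy: proving that a maximal $M$ either normalizes one of $H$, $A_1$, $A_2$ up to conjugacy, or else acts doubly transitively on a subovoid of size $s^2+1$ (whence $M\cong Sz(s)$). This is precisely the step at which the subfield subgroups are produced, and it needs a genuine orbit analysis; the point-stabilizer case and the odd-order case, by contrast, are routine consequences of the fixed-point dictionary and the rigidity lemma.
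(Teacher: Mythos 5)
First, note that the paper does not prove this theorem at all: it is Suzuki's classical subgroup theorem, quoted with references to L\"uneburg [Thm.~4.12] and Huppert--Blackburn XI [Thm.~3.10]. So the only question is whether your sketch would stand on its own, and it does not yet: there is a genuine gap at exactly the step you flag yourself. The reduction to maximal subgroups, the fixed-point dictionary, the rigidity lemma ($C_G(U)=F$ for $1\neq U\le Z(F)$, hence $N_G(U)\le FH$), the point-stabilizer case, the ``$tt'$ has odd order'' argument via the central involution of a dihedral group, and the odd-order case (a minimal normal subgroup of $M$ is cyclic of odd prime order, lies in a unique conjugate $T$ of $H$, $A_1$ or $A_2$, and $M\le N_G(C_G(N))=N_G(T)$) are all correct and genuinely routine given the facts in Section~2. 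But the dichotomy in the even-order fixed-point-free case --- either $M$ normalizes a conjugate of $H$, $A_1$, $A_2$, or $M$ acts doubly transitively on a subovoid of size $s^2+1$ --- is the entire content of Suzuki's theorem, and ``a more delicate analysis of the $M$-orbits on $\mathcal O$'' is not an argument. This is where one must rule out, for instance, that $M$ is some other fixed-point-free even-order group glued together from several dihedral pieces $N_G(T)$ with $T$ of different types, and it is where the subovoid has to be constructed (as the set of fixed points of the involutions of $M$, say) and shown to be an ovoid for $M$ of the right cardinality. In Suzuki's paper and in Huppert--Blackburn this occupies many pages of counting; a proof with this step left as a black box is an outline, not a proof.

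Two smaller points. The theorem also asserts \emph{existence} of the listed subgroups, and your sketch only addresses the classification half; the existence of $A_1$, $A_2$, $B_1$, $B_2$ and of the subfield subgroups $Sz(s)$ for every $s$ with $s^n=q$ should be stated separately (the latter is immediate from the matrix generators $S(a,b)$, $M(\lambda)$, $T$ restricted to $GF(s)\subseteq GF(q)$, and the former from the partition of $G$ and the order $|G|=q^2(q^2+1)(q-1)$). Also, invoking Feit's degree theorem plus the classification of Zassenhaus groups of even degree to identify $M$ is legitimate and not circular, but you should exclude $PSL(2,2^k)$ by observing that $3\nmid|Sz(q)|$ rather than by a vague order comparison. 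If your intent is to cite Suzuki's theorem rather than reprove it, that is exactly what the paper does, and you should say so explicitly instead of sketching.
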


\noindent
\begin{theorem}\label{info}
Let $q=2^{2m+1}$, $m>0$, $r=2^m$ and $G=Sz(q)$.
\begin{itemize}
\item [a)] Let $i\in \{1,2\}$ and let  $u_i\in A_i$, $u_i\ne 1$. Then $C_G(u_i)=A_i$.
If
$B_i=N_G(A_i)$  then 
$B_i=\langle A_i,t_i\rangle$, where $t_i$ is an element of order $4$,
and $u^{t_i}=u^q$, for all $u\in A_i$.
Moreover, $N_G(A_i)$ is a Frobenius group with kernel $A_i$.
\item [b)] Let $F,H,A_1,A_2$ as in Theorem \ref{Suzuki}. Then the
conjugates of $F,H,A_1,A_2$ form a partition of $G$. 
In particular  $F,H,A_1,A_2$, their conjugates and the conjugates of their
characteristic subgroups  are $TI$ sets in $G$. 
\end{itemize}
\end{theorem}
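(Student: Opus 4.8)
The plan is to establish (a) first and then read off the partition in (b) from it, together with the description of $F$ and $H$ as point stabilizers in the action on the ovoid. For (a), note that $A_i$ is cyclic, so $A_i\subseteq C_G(u_i)$, and that $C_G(u_i)$ is nilpotent because $G$ is a $CN$-group. Since $q^2+1$ is coprime to $q^2(q-1)$, and since $|A_1|-|A_2|=4r$ forces $\gcd(|A_1|,|A_2|)=1$ (both numbers being odd), the subgroup $A_i$ is a cyclic Hall subgroup of $G$: it contains a full Sylow $p$-subgroup of $G$ for every prime $p\mid|A_i|$. In the nilpotent group $C_G(u_i)$ the Hall subgroup on the primes dividing $|A_i|$ is normal, and by the Hall property just noted it must coincide with $A_i$; hence $A_i\trianglelefteq C_G(u_i)$, and therefore $C_G(u_i)\le N_G(A_i)=B_i$.

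By Theorem~\ref{Suzuki}, $B_i$ is a Frobenius group of order $4|A_i|$. Its Frobenius kernel is the Fitting subgroup, hence the unique normal subgroup of order $|A_i|$; since $A_i\trianglelefteq B_i$ has order $|A_i|$, the kernel is exactly $A_i$. As every element of a Frobenius group centralizing a nontrivial kernel element lies in the kernel, this gives $C_G(u_i)=C_{B_i}(u_i)=A_i$. The Frobenius complement $\langle t_i\rangle$ has order $4$, and Frobenius complements having cyclic or quaternion Sylow subgroups, it is cyclic, so $o(t_i)=4$. The involution $t_i^2$ acts fixed-point-freely on the cyclic group $A_i$, so it inverts $A_i$; thus $u\mapsto u^{t_i}$ is a square root of inversion in $\operatorname{Aut}(A_i)$. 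Since $|A_i|\mid q^2+1$ we have $q^2\equiv-1\pmod{|A_i|}$, so $u\mapsto u^q$ is such a square root; choosing $t_i$ (or its inverse) appropriately and matching it with the explicit description of $Sz(q)$ (see, e.g., \cite[Ch.~XI]{H-Bl_III}) yields $u^{t_i}=u^q$ for all $u\in A_i$. This proves (a).

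For (b) I would first check that distinct conjugates of each of $F,H,A_1,A_2$ meet only in $1$. For $A_i$ this is immediate from (a): if $1\ne g\in A_i^x\cap A_i^y$ then $A_i^x=C_G(g)=A_i^y$. For $F$ and $H$ I would argue inside the action on the $q^2+1$ points of the ovoid, using that $G$ is a Zassenhaus group: a nontrivial element of $F$ fixes exactly the point $p_\infty$, and a nontrivial element of $H$ fixes exactly the two points $p(0,0)$ and $p_\infty$. Hence if $1\ne g$ lies in two conjugates of $F$ (respectively of $H$), the corresponding one-point (respectively two-point) fixed sets coincide, so the two conjugating elements differ by an element of the stabilizer of that point, namely $N_G(F)=FH$ (respectively of the setwise stabilizer of that pair, which has order $2(q-1)$ and hence equals $N_G(H)=B_0$); in either case the two conjugates are equal. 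Since $N_G(F)$, $N_G(H)$, $N_G(A_i)$ normalize the characteristic subgroups of $F$, $H$, $A_i$ respectively, the same comparison shows that conjugates of those characteristic subgroups are $TI$ sets as well.

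It remains to see that the conjugates of $F,H,A_1,A_2$ together with $\{1\}$ exhaust $G$. A nonidentity element cannot lie simultaneously in conjugates of two different members of $\{F,H,A_1,A_2\}$, since the orders of elements of these subgroups divide the pairwise coprime numbers $q^2$, $q-1$, $|A_1|$, $|A_2|$. Now count: $G$ has $[G:FH]=q^2+1$ conjugates of $F$, each with $q^2-1$ nonidentity elements; $[G:B_0]=\tfrac12 q^2(q^2+1)$ conjugates of $H$, each with $q-2$; $[G:B_1]=\tfrac14 q^2(q-1)|A_2|$ conjugates of $A_1$, each with $|A_1|-1$; and $[G:B_2]=\tfrac14 q^2(q-1)|A_1|$ conjugates of $A_2$, each with $|A_2|-1$. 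Adding these contributions and $1$, and using $|A_1||A_2|=q^2+1$ together with $|A_1|+|A_2|=2q+2$, one checks that the total equals $(q^2+1)q^2(q-1)=|G|$; hence the union is all of $G$ and these conjugates form a partition. The real content is part (a): once $C_G(u_i)=A_i$ and the orders of $N_G(F),N_G(H),N_G(A_i)$ are in hand, both the $TI$ property and the partition drop out of the counting identity above, and the only place one genuinely has to return to the explicit model of $Sz(q)$ (or to the literature) is to pin down the exact power in $u^{t_i}=u^q$.
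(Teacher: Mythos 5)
Your proposal is correct, but it takes a genuinely different route from the paper: the paper gives no proof of Theorem~\ref{info} at all, deferring (together with Theorem~\ref{Suzuki}) to \cite[Thm.~4.12]{Lun} and \cite[Ch.~XI]{H-Bl_III}, whereas you derive nearly everything from facts the paper has already put on record. Your chain for (a) is sound: $\gcd(|A_1|,|A_2|)$ divides $4r=2^{m+2}$ and is odd, hence $A_i$ is a cyclic Hall subgroup of $G$; the $CN$-property makes $C_G(u_i)$ nilpotent, so its Hall subgroup on the primes of $|A_i|$ is normal and must equal $A_i$, giving $C_G(u_i)\le B_i$; the Frobenius kernel of $B_i$ is $F(B_i)=A_i$ by the coprimality of kernel and complement orders, the complement is cyclic of order $4$ because a Frobenius complement has a unique involution, and $C_{B_i}(u_i)=A_i$ follows. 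Your proof of (b) via TI-ness of each family (centralizers for $A_i$, fixed-point sets in the Zassenhaus action for $F$ and $H$), coprimality of element orders across families, and the count
$1+(q^2+1)(q^2-1)+\tfrac12 q^2(q^2+1)(q-2)+\tfrac14 q^2(q-1)\bigl(2|A_1||A_2|-(|A_1|+|A_2|)\bigr)=q^5-q^4+q^3-q^2=|G|$
also checks out (I verified the identity using $|A_1||A_2|=q^2+1$ and $|A_1|+|A_2|=2q+2$), and the passage to characteristic subgroups is handled correctly via $xy^{-1}\in N_G(F)$ etc. What the paper's citation buys is brevity and the single fact your abstract argument cannot reach: the exact exponent in $u^{t_i}=u^q$. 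You only show that conjugation by $t_i$ is \emph{some} square root of inversion in $\mathrm{Aut}(A_i)$; since $k^2\equiv-1\pmod{|A_i|}$ has $2^{\omega(|A_i|)}$ solutions, swapping $t_i$ for $t_i^{-1}$ only toggles $k\leftrightarrow -k$ and cannot pin down $k\equiv q$ when $|A_i|$ has more than one prime divisor. You flag this honestly and send the reader to the explicit matrix model — which is exactly where the paper sends the reader for the whole theorem — so the one non-self-contained step in your write-up coincides with the paper's wholesale appeal to the literature.
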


\section{The combinatorial depth of subgroups of $Sz(q)$}

The main result of this paper is the following:

\begin{theorem}\label{main}
\begin{itemize}
\item [a)] Let us list the representatives of conjugacy classes of the maximal
subgroups of the Suzuki group $G=Sz(q)$. By Theorem \ref{Suzuki} they
are the following:
$N_G(F)$, $B_0=N_G(H)$, $B_1=N_G(A_1)$, $B_2=N_G(A_2)$ and $Sz(s)$ 
for maximal $s$ such that $s^t=q$ for some
positive integer $t>1$. The combinatorial
depths of these subgroups are:

$d_c(N_G(F),G)=5$, $d_c(B_0,G)=4$, $d_c(B_1,G)=4$, $d_c(B_2,G)=4$,
and  $d_c(Sz(s),G)=4$.
\item [b)] The following often used subgroups and their conjugates have combinatorial depth $3$:

\begin{itemize}
\item [-]  $F$ or    characteristic subgroups of $F$
\item [-]  subgroups of $H$
\item [-]  subgroups of $A_1$
\item [-]  subgroups of $A_2$
\item [-] subgroups of order $2$
\item [-] cyclic subgroups of order $4$
\item [-] $S_1\in Syl_2(Sz(s))$ for some Suzuki subgroup $Sz(s)$.
\end{itemize}
\item [c)] Some  $2$-subgroups (and their conjugates) have the following combinatorial depths:
\begin{itemize}
\item [-] the Klein four subgroups $K_4$ have $d_c(K_4,G)=4$
\item [-] $L\leq Z(F)$ of order $2^{f-1}$, where $|Z(F)|=2^f$ have
$d_c(L,G)=2f-2$.
\end{itemize}

\item [d)] $d_c(Sz(s),G)=4$ for any $s$ such that $s^t=q$ for some positive
integer $t>1$.
\item [e)] Viewing $G$ as a twisted group of Lie type, some subgroups of $G$
are of special importance. Using the notations of \cite{CSM} they are
$B^1=N_G(F)$, $H^1=H$, $N^1=N_G(H)$, $U^1=F$. Their depths can be read off
from the above lists. 
\end{itemize}
\end{theorem}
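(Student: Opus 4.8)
The plan is to compute, for each maximal subgroup $M$ of $G=Sz(q)$, the lattice $\mathcal U_\infty(M,G)$ of iterated intersections of conjugates, extract the invariants $\delta_*$, $\delta$, $\delta^*$ of Theorem \ref{BDK3.11}, and then pin down $d_c(M,G)$ exactly using Theorems \ref{BDK3.9} and \ref{BDK3.11}. The basic tool throughout is the partition statement in Theorem \ref{info}(b): since the conjugates of $F,H,A_1,A_2$ partition $G$, and $F,H,A_1,A_2$ together with their characteristic subgroups are TI-sets, the intersection of two distinct conjugates of any of these groups (or their characteristic subgroups) is trivial. This collapses $\mathcal U_\infty$ drastically and is what makes the computation feasible.

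For part (b) one argues as follows. Take $U$ to be $F$ (or a characteristic subgroup of $F$), or a subgroup of $H$, $A_1$, or $A_2$, or a subgroup of order $2$ or $4$, or a Sylow $2$-subgroup of some $Sz(s)\le G$. In each case $U$ lies in exactly one member of the partition, so for $x\notin N_G(U)$ the intersection $U\cap U^x$ is trivial (using TI-ness of the ambient partition class), hence already $\mathcal U_1=\{U\}\cup\{1\}$ and $\mathcal U_1=\mathcal U_\infty$. Thus $\delta_*=1$, $\delta=1$ whenever the core is trivial. Now $\operatorname{Core}_G(U)=1\le Z(G)=1$, and $\delta_*=\delta=1$, so by Theorem \ref{BDK3.11}(c) we get $d_c(U,G)=2\delta-1=1$? — no: one must check that $U$ is \emph{not} normal and that $G\ne U\,C_G(U)$, so that $d_c>1$; since these $U$ are non-normal with $C_G(U)$ small (for nontrivial $U$, $C_G(U)$ lies in one partition class, so $U\,C_G(U)\subsetneq G$), Theorem \ref{BDK3.9}(iii) rules out depth $1$, and the chain $1<U$ in $\mathcal U_\infty$ gives $\delta=2$, whence $d_c=3$. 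This is the argument schema; the only care needed is the precise value of $\delta$ (length of the longest chain), which here is $2$.

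For parts (a), (c), (d) the subgroups $M$ are the normalizers $N_G(F)$, $B_0=N_G(H)$, $B_1=N_G(A_1)$, $B_2=N_G(A_2)$, the Suzuki subgroups $Sz(s)$, the Klein four groups $K_4$, and the index-$2$ subgroups $L\le Z(F)$. Here $\operatorname{Core}_G(M)=1$ (as $G$ is simple and $M\ne G$), so $Z(G)=1$ contains the core and one hopes to invoke Theorem \ref{BDK3.11}(c) once $\delta_*=\delta$ is established. The work is to compute the intersections $M\cap M^x$ and iterate: for $M=N_G(F)=FH$, two distinct conjugates meet in a conjugate of $H$ (since the Sylow $2$-parts are TI), and then a further conjugate brings the intersection to $1$; so $\mathcal U_\infty$ has chains $1<(\text{conj.\ of }H)<N_G(F)$ of length $3$, giving $\delta=3$, and one checks $\delta_*=2$ (the core $1$ is an intersection of two conjugates) versus $\delta=3$, so Theorem \ref{BDK3.11}(c) does \emph{not} apply directly and one must instead verify $d_c(N_G(F),G)=5$ via the criterion in Theorem \ref{BDK3.9}(i),(ii): $\mathcal U_2=\mathcal U_3$ gives $d_c\le 6$, failure of the depth-$5$ refinement in (ii) gives $d_c=5$. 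For the Borel-type subgroups $B_0,B_1,B_2$, a similar analysis gives $\mathcal U_1=\mathcal U_\infty$ with $\delta_*=\delta=2$, hence $d_c=2\cdot 2-1=3$? — again one must be careful: the dihedral/Frobenius structure means $B_i\cap B_i^x$ can be a subgroup of order $2$ or $4$ for suitable $x$, not just $A_i$ and $1$, so the chain length is $\delta=3$, and with $\delta_*=2$ one lands on $d_c=4$ by checking $\mathcal U_2=\mathcal U_3$ (depth $\le 4$) and $\mathcal U_1\ne\mathcal U_2$ (depth $>3$), using Theorem \ref{BDK3.9}(i). For $Sz(s)\le Sz(q)$ one uses that two conjugates of $Sz(s)$ intersect in a smaller Suzuki-type or partition-class subgroup, iterating to a short lattice with $\delta=3$, $\delta_*=2$, so $d_c=4$. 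For $K_4$ one computes that $K_4\le Z(F)$ for a unique conjugate of $F$, two conjugates of $K_4$ meet in a subgroup of order $\le 2$, giving $\delta=3$, $\delta_*=2$, $d_c=4$. For $L\le Z(F)$ of index $2$ in $Z(F)$ with $|Z(F)|=2^f$, the key point is that $H$ acts on the $2^f-1$ involutions of $Z(F)$ sharply transitively, so the conjugates $L^h$ ($h\in H$) are $2^f$ distinct index-$2$ subgroups of $Z(F)$ whose pairwise and higher intersections form a chain of length exactly $f$ inside $Z(F)$ down to $1$; this yields $\delta=f$ and, by a counting argument on how many conjugates are needed to reach the core, $\delta_*=f-1$, so Theorem \ref{BDK3.9} gives $d_c(L,G)=2f-2$.

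The main obstacle is this last item — the subgroups $L\le Z(F)$ of index $2$. Everything else reduces to "two distinct conjugates meet trivially or in one well-understood smaller group, then stop," so the lattices are tiny and the depth is read off immediately. But for $L$ the lattice $\mathcal U_\infty(L,G)$ is genuinely $f$-dimensional: it is (essentially) the lattice of intersections of the hyperplanes $L^h$ inside the $\mathbb F_2$-vector space $Z(F)\cong \mathbb F_2^{f}$ under the transitive $H$-action, and one must (i) show that $f$ suitably chosen conjugates intersect in $1$ while no $f-2$ do (to get $\delta^*$ and the chain length $\delta=f$), (ii) show the core $1$ is already the intersection of $f-1$ conjugates (to get $\delta_*=f-1$), and (iii) confirm there is no "half-step" improvement, i.e.\ rule out depth $2f-3$ using the refined condition $x_1hx_1^{-1}=y_1hy_1^{-1}$ in Theorem \ref{BDK3.9}(ii). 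Step (i)–(ii) hinge on the precise module structure of $Z(F)$ as an $\mathbb F_2 H$-module (it is the natural module on which the cyclic group $H$ of order $q-1$ acts), and getting the exact numbers $f$ versus $f-1$ right is where the real care is needed.
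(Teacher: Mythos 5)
Your overall framework (compute the lattice $\mathcal{U}_\infty$ of iterated intersections of conjugates, then apply Theorems \ref{BDK3.9} and \ref{BDK3.11}) is exactly the paper's, but several of your concrete steps are wrong or do not suffice. First, for $N_G(F)$: you correctly note that two distinct conjugates of $N_G(F)$ meet in a (nontrivial) two-point stabilizer, but then claim $\delta_*=2$; this is self-contradictory, since the trivial core is then \emph{not} an intersection of two conjugates. In fact $\delta_*=\delta=\delta^*=3$, Theorem \ref{BDK3.11}(c) applies directly, and $d_c=2\delta-1=5$ falls out immediately — your fallback via \ref{BDK3.9} is both unnecessary and garbled (failure of the depth-$5$ condition would give $d_c=6$, not $5$). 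Second, your lower-bound arguments of the form ``$\delta=3$, $\delta_*=2$, hence $d_c=4$'' are invalid: Theorem \ref{BDK3.11}(a) only yields $3\le d_c\le 6$. Ruling out depth $3$ for $B_0,B_1,B_2$, $K_4$ and $Sz(s)$ requires exhibiting $x_1,x_2$ for which no $y_1$ satisfies \emph{both} $M_{x_1,x_2}=M\cap M^{y_1}$ and the conjugation-compatibility condition of Theorem \ref{BDK3.9}(ii); the paper does this via explicit constructions exploiting the TI-property of $H$, $A_i$ and the Frobenius structure. Your alternative ``$\mathcal{U}_1\ne\mathcal{U}_2$ gives depth $>3$'' is off by one ($\mathcal{U}_1\ne\mathcal{U}_2$ would give depth $>4$, contradicting your own upper bound $d_c\le 4$).

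Third, your schema for part (b) — ``$U$ lies in one partition class, so $U\cap U^x$ is trivial for $x\notin N_G(U)$'' — fails for cyclic subgroups of order $4$ and for $S_1\in Syl_2(Sz(s))$, neither of which is TI: for $x\in N_G(F)\setminus N_G(C_4)$ one can have $|C_4\cap C_4^x|=2$, and $S_1\cap S_1^u$ can equal $Z(S_1)$. These cases need separate arguments (the paper's Propositions \ref{normalizer} and \ref{S_1cap S_1^x} and a verification of the \ref{BDK3.9}(ii) condition); note that $K_4\le Z(F)$ sits in exactly the same partition class yet has depth $4$, so the schema cannot distinguish the depth-$3$ from the depth-$4$ cases. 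Fourth, for $L\le Z(F)$ of index $2$ you assert $\delta_*=f-1$, but in $G$ one has $\delta_*(L,G)=2$ (take $y\notin N_G(F)$, then $L\cap L^y=1$ by TI-ness of $F$), and in any case $\delta$ and $\delta_*$ alone cannot give the exact value $2f-2$; the paper instead proves $\mathcal{U}_{f-3}\ne\mathcal{U}_{f-2}=\mathcal{U}_{f-1}$ using Lemma \ref{intersection} and rules out $2f-3$ by a centralizer argument. Finally, for $Sz(s)\le Sz(q)$ the real content is the classification of the intersections $G_1\cap G_1^x$ (which requires ruling out smaller Suzuki subgroups and dihedral groups as intersections, and a nontrivial counting argument to show that trivial intersections occur at all); your one-sentence treatment does not engage with this, and the claimed list ``smaller Suzuki-type or partition-class subgroup'' is not correct.
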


We will prove this theorem in a series of propositions and lemmas.

Note that by \cite[Thm.~3.12~(b),(c)]{BDK} every $TI$ subgroup of a group $G$
has combinatorial depth at most $3$ and has combinatorial
depth at most $2$ if and only if it is normal.

\begin{remark}\label{TI} It follows  that every nontrivial subgroup $L$ of $Sz(q)$
for $q\geq 8$ which is a $TI$ set has combinatorial depth $3$,  $\delta_*(L)=\delta(L)=\delta^*(L)=2$. In particular, by Theorem \ref{info} this holds for  $F,H,A_1,A_2$ and for the conjugates of
their nontrivial characteristic subgroups.
\end{remark}

From now on let $G=Sz(q)$, $G_1=Sz(s)$ a Suzuki subgroup of $G$.

\begin{proposition} Let $N=N_G(F)$. The combinatorial depth $d_c(N,G)$
of the $Hall$-subgroup $N$ is $5$,
$\delta_*(N)=\delta(N)=\delta^*(N)=3$.
\end{proposition}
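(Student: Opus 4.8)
The plan is to reduce everything to the combinatorics of point stabilizers in the $2$-transitive action of $G=Sz(q)$ on the $q^2+1$ points of the ovoid $\mathcal O$, in which $N=N_G(F)$ is the stabilizer $G_{p_\infty}$ of $p_\infty$. Since $G$ is simple and $N$ is proper, $\mathrm{Core}_G(N)=1$; moreover $N$ is maximal, hence self-normalizing, so its conjugates are exactly the $q^2+1$ point stabilizers $G_p$ ($p\in\mathcal O$), distinct conjugates corresponding to distinct points. Thus $\delta_*(N)$, $\delta(N)$ and $\delta^*(N)$ all measure how many point stabilizers one must intersect to reach the trivial subgroup.

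The key input is the Zassenhaus property of $Sz(q)$ together with the description of the two-point stabilizer recalled in Section~2: for $p\ne p'$ the intersection $G_p\cap G_{p'}$ is a conjugate of the cyclic subgroup $H$ of order $q-1$ (all two-point stabilizers are conjugate by $2$-transitivity, and $G_{p_\infty}\cap G_{p(0,0)}=H$), while for three pairwise distinct points $G_p\cap G_{p'}\cap G_{p''}=1$, since a nonidentity element of a Zassenhaus group has at most two fixed points. Because $q\ge 8$ forces $q-1>1$, any two distinct conjugates of $N$ meet nontrivially while any three distinct conjugates meet in $1$; hence $\delta^*(N)=3$. Likewise $1=G_{p_1}\cap G_{p_2}\cap G_{p_3}$ for three distinct points, while $1$ is not the intersection of one or two conjugates of $N$ (such an intersection is a conjugate of $N$ or of $H$, in either case nontrivial), so $\delta_*(N)=3$. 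By Theorem~\ref{BDK3.11}(b) we have $\delta_*(N)\le\delta(N)\le\delta^*(N)$, whence $\delta(N)=3$ as well.

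With $\mathrm{Core}_G(N)=1\le Z(G)$ and $\delta_*(N)=\delta(N)$, Theorem~\ref{BDK3.11}(c) then yields $d_c(N,G)=2\delta(N)-1=5$, completing the proof. As a cross-check (and an alternative endgame), one can instead note that $\mathcal U_1(N,G)$ consists of $N$ together with the two-point stabilizers while $\mathcal U_2(N,G)$ in addition contains $1$, so $\mathcal U_1\ne\mathcal U_2=\mathcal U_\infty$ and Theorem~\ref{BDK3.9}(i) gives $5\le d_c(N,G)\le 6$; the value $6$ is then excluded via Theorem~\ref{BDK3.9}(ii): given $x_1,x_2,x_3$ with $N_{x_1,x_2,x_3}$ a two-point stabilizer $G_{p_\infty}\cap G_p$, take $y_1=y_2=x_1$ when $p_\infty^{x_1}=p$, and $y_1=x_1$, $y_2=x_j$ for an index $j\in\{2,3\}$ with $p_\infty^{x_j}=p$ otherwise, which matches the conjugation action of $x_1$ and realizes the same intersection with only two conjugates.

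I do not expect a genuine obstacle: the whole content lies in translating the statement into the language of point stabilizers, after which the Zassenhaus property and the known shape of the two-point stabilizer make the evaluation of $\delta_*,\delta,\delta^*$ immediate and Theorem~\ref{BDK3.11} does the rest. The only step demanding care is the bookkeeping that pins down $\mathcal U_\infty(N,G)$ exactly and rules out accidental intersections of intermediate size.
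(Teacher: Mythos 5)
Your argument is correct and follows essentially the same route as the paper: identify $N$ as a one-point stabilizer in the Zassenhaus action, observe that two distinct conjugates meet in a nontrivial two-point stabilizer while three distinct conjugates meet trivially, conclude $\delta_*(N)=\delta(N)=\delta^*(N)=3$, and finish with Theorem~\ref{BDK3.11}(c). The extra cross-check via Theorem~\ref{BDK3.9} is sound but not needed.
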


\begin{proof} The subgroup  $N$ is a one-point stabilizer in the Zassenhaus group $G$.
Let $N^{x_1}$ and $N^{x_2}$ be two conjugates of $N$, which
are different from each other and from $N$. Then
$N\cap N^{x_1}\cap N^{x_2}$  stabilizes $3$ points, hence it is the identity.
However,  if $y_1$ does not normalize $N$ then  $N\cap N^{y_1}$ 
is a two point stabilizer in $G$, which is nontrivial.
Thus by Theorem \ref{BDK3.11} (b), $\delta_*(N)=\delta(N)=\delta^*(N)=3$.
%$d_c(N,G)>2$, since $N$ is not normal in $G$ [Thm 3.12, BDK]. 
%It cannot be $3$ or $4$ since then by [Thm 3.9, BDK]
%a three point stabilizer have to be a two point stabilizer, which does not hold
%by the above argument. We prove that $d_c(N,G)=5$. We have to show that for any
%$x_1,x_2,x_3$ there exist $y_1,y_2$ with $N\cap N^{x_1}\cap N^{x_2}\cap N^{x_3}=
%N\cap N^{y_1}\cap N^{y_2} $
% and for every $h$ in this intersection
% $x_1hx_1^{-1}=y_1hy_1^{-1}$. If we have $4$ different conjugates,
% then their intersection is $1$, and of course
%every $3$-point stabilizer is also $1$. We may choose $y_1=x_1,y_2=x_2$.
% If there are just 3 different conjugates $N,N^{x_1},N^{x_2}$
% then we may choose $y_1=x_1, y_2=x_2$.
% And we are done.
%Since $\delta(N)=\delta_*(N)=
%\delta^*(N)$=3,
Since $Core_G(N)=1\leq Z(G)$, it follows from Theorem \ref{BDK3.11} (c)  that 
$d_c(N,G)=2\delta(N)-1=5$.
\end{proof}

\noindent
\begin{lemma}\label{intersection}
Let $1\leq m_1,m_2\leq f$ be natural numbers. Then
two subgroups $H_1,H_2$ of $Z(F)$ of orders $2^{m_1}$ and $2^{m_2}$ respectively have intersection of order at least $2^{m_1+m_2-f}$, where $|Z(F)|=q=2^f$ $(f=2m+1)$.
\end{lemma}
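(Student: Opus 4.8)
The plan is to use the elementary fact from linear algebra about dimensions of subspaces, transported to the present setting. The center $Z(F)$ has order $q = 2^f$ and, being an elementary abelian $2$-group (all its nonidentity elements are involutions, as stated in Section~2), it is a vector space of dimension $f$ over the field $\mathbb{F}_2$. Under this identification a subgroup of order $2^{m_i}$ is precisely an $\mathbb{F}_2$-subspace of dimension $m_i$, and the group-theoretic intersection $H_1 \cap H_2$ corresponds to the subspace intersection.

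First I would record that $Z(F)$ is elementary abelian of rank $f$, hence $\cong \mathbb{F}_2^f$ as a vector space, and that subgroups correspond to subspaces with $|H_i| = 2^{\dim H_i}$. Next I would invoke the standard dimension formula
\[
\dim(H_1 + H_2) = \dim H_1 + \dim H_2 - \dim(H_1 \cap H_2),
\]
together with $\dim(H_1 + H_2) \le f$ since $H_1 + H_2 \le Z(F)$. Rearranging gives $\dim(H_1 \cap H_2) \ge m_1 + m_2 - f$, and exponentiating (taking $2$ to this power) yields $|H_1 \cap H_2| \ge 2^{m_1 + m_2 - f}$, which is the claim. If $m_1 + m_2 - f \le 0$ the bound is vacuous (the intersection contains at least the identity), so no case distinction is really needed.

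There is essentially no obstacle here: the only thing to be careful about is justifying that $Z(F)$ is elementary abelian, which is exactly the assertion in Section~2 that the involutions of $F$ together with the identity constitute $Z(F)$ (so every element of $Z(F)$ has order dividing $2$), combined with $Z(F)$ being abelian by definition of a center. Once that is in hand, the proof is a one-line application of the Grassmann dimension identity; I would present it in that compressed form rather than belaboring it.
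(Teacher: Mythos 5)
Your argument is correct and is exactly the one the paper uses: the paper's proof simply cites ``the dimension formula for vector spaces,'' relying on the fact (recorded in Section~2) that $Z(F)$ is elementary abelian of order $2^f$, so your identification of $Z(F)$ with $\mathbb{F}_2^f$ and the Grassmann identity is the intended reasoning, merely written out in full.
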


\begin{proof}
This follows from the dimension formula for vector spaces,
(see Lemma 3.2 in \cite{Frit}).
\end{proof}

\begin{proposition}\label{F}
Let $L\leq Z(F)$ be a subgroup of order $2^{f-1}$, where $|Z(F)|=2^f$. Then
$d_c(L,G))=2f-2$. 
\end{proposition}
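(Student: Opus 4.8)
The plan is to pin down the combinatorial depth of $L$ by computing the three invariants $\delta_*(L),\delta(L),\delta^*(L)$ of the poset $\mathcal{U}_\infty(L,G)$ and then invoking Theorem \ref{BDK3.11}. The key structural facts I would use are: $L\le Z(F)$ is an $\mathbb{F}_2$-subspace of codimension $1$ in $Z(F)\cong(\mathbb{F}_2)^f$; by Theorem \ref{info}(b) the conjugates of $F$ (hence of $Z(F)$) are $TI$ in $G$, so any conjugate $L^x$ either lies inside the \emph{same} $Z(F)$ (when $x$ normalizes $F$, equivalently when $L^x\le F$) or meets $Z(F)$ trivially; and Lemma \ref{intersection}, which controls how small an intersection of two codimension-$1$ subspaces of $Z(F)$ can be. Since $\operatorname{Core}_G(L)=1$ (because $L\ne 1$ is not normal, as $N_G(L)\le N_G(F)$ is proper in the simple group $G$), once I know $\delta_*(L)=\delta(L)$ I get $d_c(L,G)=2\delta(L)-1$ from Theorem \ref{BDK3.11}(c); but the claimed answer is $2f-2$, an even number, so in fact I expect $\delta_*(L)<\delta(L)$ and will instead read off the depth from $2\delta_*-1\le d_c(L,G)\le 2\delta$ together with the combinatorial characterization in Theorem \ref{BDK3.9}.

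First I would show $\delta_*(L)=2$: since $\operatorname{Core}_G(L)=1$ and $L$ is a nontrivial subspace of $Z(F)$, picking any $x\in G$ with $F^x\ne F$ gives $L\cap L^x=1$ (the two relevant copies of $Z(F)$ are $TI$ and distinct), so the core is already an intersection of two conjugates. Next, the main point is to compute $\delta^*(L)$ — equivalently, the largest chain in $\mathcal{U}_\infty$ — and this is where Lemma \ref{intersection} does the work. Intersecting $L$ with conjugates $L^{x_1},\dots,L^{x_k}$ that all sit inside $F$ amounts to intersecting codimension-$1$ subspaces of $(\mathbb{F}_2)^f$; such an intersection can have dimension as low as $1$ but no lower while staying nontrivial, and Lemma \ref{intersection} (applied inductively) shows each new conjugate can cut the dimension by at most $1$, so the strictly descending chain $L=L_0\supsetneq L_1\supsetneq\cdots$ of successive partial intersections inside $Z(F)$ has length at most $f-1$ before reaching a $1$-dimensional subspace, and then one more conjugate (outside $N_G(F)$) drops to the trivial group. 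Conversely I would exhibit a flag of subspaces realizing each step, so that $\mathcal{U}_\infty(L,G)$ contains a chain $L \supsetneq \cdots \supsetneq (\text{line}) \supsetneq 1$ of length $f$ (i.e. $f+1$ terms), giving $\delta(L)=\delta^*(L)=f$. Hence $2\delta_*-1=3\le d_c(L,G)\le 2\delta=2f$, which is not yet sharp.

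To close the gap to the exact value $2f-2$ I would argue directly with Theorem \ref{BDK3.9}. For the upper bound $d_c(L,G)\le 2(f-1)$ I need: for any $x_1,\dots,x_{f-1}\in G$ there exist $y_1,\dots,y_{f-2}$ with $L_{x_1,\dots,x_{f-1}}=L_{y_1,\dots,y_{f-2}}$, i.e. $\mathcal{U}_{f-2}=\mathcal{U}_{f-1}=\mathcal{U}_\infty$. This holds because any element of $\mathcal{U}_\infty$ is either $1$, a line in $Z(F)$, or a subspace of $Z(F)$ of dimension $d$ with $2\le d\le f-1$; the line and the proper subspaces are clearly realized with at most $f-2$ conjugates (codimension $f-1-d$ inside $Z(F)$, so $f-1-d\le f-3$ extra conjugates suffice, plus possibly none), and $1=L\cap L^x$ needs only one conjugate — so $\mathcal{U}_{f-2}$ already contains everything, whence $d_c(L,G)\le 2f-2$. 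For the lower bound $d_c(L,G)\ge 2f-2$, i.e. $d_c(L,G)>2f-3=2(f-1)-1$, I would negate the two conditions in Theorem \ref{BDK3.9}: first $\mathcal{U}_{f-3}\ne\mathcal{U}_{f-2}$, because a $1$-dimensional subspace of $Z(F)$ requires at least $f-1$ conjugates (each conjugate of $L$ inside $F$ has codimension $1$, so by the dimension bound in Lemma \ref{intersection} the intersection of any $k$ of them has dimension $\ge f-k$, forcing $k\ge f-1$ to reach a line) and $f-1>f-3$ when... wait, $f-1>f-3$ always; more carefully, a line needs $\ge f-1$ of them whereas $\mathcal{U}_{f-3}$ only allows $f-3$, so the line is a new element at stage $f-2$, killing the $d_c\le 2f-3$ option via part (ii)'s failure of even the weaker $\mathcal{U}_{f-3}=\mathcal{U}_{f-2}$ condition; and this also defeats part (i) at that level. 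Combining, $d_c(L,G)=2f-2$, and along the way $\delta_*(L)=2$, $\delta(L)=\delta^*(L)=f$.

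The main obstacle I anticipate is the sharp counting in Lemma \ref{intersection}'s direction: I must be careful that a conjugate $L^x$ contributing to an intersection inside $Z(F)$ really does have the form of a codimension-$1$ subspace of \emph{this} fixed $Z(F)$ (which uses $TI$-ness of $F$ and the fact that $H$, or the relevant normalizer, acts on $Z(F)$ so that the various $L^x$ with $F^x=F$ are genuinely distinct hyperplanes), and that no conjugate with $F^x\ne F$ can help keep the intersection large — it can only drop it to $1$. Once the geometry "conjugates of $L$ are hyperplanes of $Z(F)$ or meet $Z(F)$ trivially'' is cleanly established, the rest is linear algebra over $\mathbb{F}_2$ plus bookkeeping with Theorem \ref{BDK3.9}.
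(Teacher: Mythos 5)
Your upper bound is sound and is essentially the paper's argument: by $TI$-ness of $F$ (Theorem \ref{info}(b)) every nontrivial member of $\mathcal{U}_\infty(L,G)$ is an intersection of hyperplane-conjugates of $L$ inside the fixed $Z(F)$, a member of dimension $d\ge 1$ needs only $f-1-d\le f-2$ non-redundant conjugates by Lemma \ref{intersection}, and the trivial group needs one conjugate taken outside $N_G(F)$; hence $\mathcal{U}_{f-2}=\mathcal{U}_{f-1}$ and $d_c(L,G)\le 2f-2$. Your claim $\mathcal{U}_{f-3}\ne\mathcal{U}_{f-2}$ is also correct, \emph{provided} you justify that a line actually occurs in $\mathcal{U}_\infty$: for that you need $Core_{N_G(F)}(L)=1$ (which the paper gets from the transitivity of $N_G(F)$ on the involutions of $F$), since $Core_G(L)=1$ witnessed by a single disjoint conjugate produces no intermediate terms. (Two non-load-bearing slips: a maximal chain in $\mathcal{U}_\infty$ has $f$ terms, not $f+1$; and $\delta^*(L)=f$ is doubtful, since more than $f$ distinct hyperplane-conjugates of $L$ can contain a common line.)

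The genuine gap is your final step. From $\mathcal{U}_{f-3}\ne\mathcal{U}_{f-2}$ and Theorem \ref{BDK3.9}(i) you get only $d_c(L,G)>2f-4$, i.e.\ $d_c\ge 2f-3$; it does not ``kill the $d_c\le 2f-3$ option.'' The bound $d_c\le 2f-3$ is the case $i=f-1$ of Theorem \ref{BDK3.9}(ii), whose set-theoretic part is $\mathcal{U}_{f-1}\subseteq\mathcal{U}_{f-2}$ --- and that condition \emph{holds}, by your own upper-bound argument. The only thing that can fail is the additional requirement that $y_1$ induce the same conjugation as $x_1$ on $L_{x_1,\dots,x_{f-1}}$, and your proposal never engages with it, so as written you have only shown $d_c(L,G)\in\{2f-3,\,2f-2\}$. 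The paper closes this by a centralizer argument specific to Suzuki groups: take $L_{x_1,\dots,x_{f-1}}$ of order $2$ with $x_1=1$; then an admissible $y_1$ must centralize an involution of $F$, hence lie in $F$, hence centralize $L\le Z(F)$, so $L^{y_1}=L$ and $L_{y_1,\dots,y_{f-2}}$ is an intersection of at most $f-2$ hyperplanes of $Z(F)$, which by Lemma \ref{intersection} has order at least $4$ --- a contradiction. You need an argument of this kind (or some other use of the conjugation clause in Theorem \ref{BDK3.9}(ii)) to reach the stated value $2f-2$.
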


\begin{proof}
Our argument is similar to that of   \cite[Satz 3.3]{Frit}.
It is easy to see that the depth $\delta $ of ${\mathcal{U}}_{\infty}:={\mathcal{U}}_{\infty}(L,G)$ is at most $f$.
Since $N_G(F)$ acts transitively on the involutions of $F$,
and these generate $Z(F)$, we have that $Core_{N_G(F)}(L)=1$. 
By Lemma \ref{intersection} we need at least $f$ conjugates of $L$ by elements of $N_G(F)$, i.e.
$L_1,\ldots,L_f$ so that their intersection should be $1$. Hence $\delta (L)$  is also
$f$, and this is the same as $\delta_*(L)$, where conjugation is considered
by elements of $N_G(F)$. By Theorem \ref{BDK3.11} (c), $d_c(L,N_G(F))=2f-1$.
It means among other things that $L\cap L^{x_1}\cap \ldots L^{x_{f}}=
L\cap L^{y_1}\cap \ldots L^{y_{f-1}}$ for suitable elements $y_i$, where $i=1,\ldots, f-1$ and
${\mathcal{U}}_{f-2}\ne {\mathcal{U}}_{f-1}={\mathcal{U}}_{f}$.
We want to show that in $G$ the following holds: ${\mathcal{U}}_{f-3}\ne {\mathcal{U}}_{f-2}={\mathcal{U}}_{f-1}$.
Let us consider $L_{x_1,\ldots ,x_{f-1}}=L\cap L^{x_1}\cap \ldots L^{x_{f-1}}$.
We have to find elements $y_1,\ldots y_{f-2}$ in $G$ such that
$L_{y_1,\ldots y_{f-2}}=L_{x_1,\ldots x_{f-1}}$.  If $L_{x_1,\ldots ,x_{f-1}}=1$  then
$y_1$ can be chosen outside $N_G(F)$, since $F$ is $TI$, thus if
$y_1\in G\backslash N_G(F)$ then  $L\cap L^{y_1}=1$. 
If the intersection $L_{x_1,\ldots ,x_{f-1}}$ is of order  $2$
then if we intersect one by one, in some step we do not get a
smaller subgroup. So one of the $L^{x_i}$ can be cancelled. Further cancellation
is impossible
by Lemma \ref{intersection}, 
so the intersection cannot belong to ${\mathcal{U}}_{f-3}$.
However,  there are subgroups $L,L^{x_1}\ldots,L^{x_{f-1}}$ such that  their 
intersection is of order $2$,  since the depth of ${\mathcal{U}}_{\infty}(L,G)$ is $f$.
%But then each order two subgroup of $F$ occurs as intersection, 
%since the involutions of $F$ are conjugate. 
The depth $d_c(L,G)$ cannot be $2f-3$, for  if we consider the situation when
$L_{x_1,\ldots,x_{f-1}}$ is of order $2$ and  
$x_1=1$, then by Theorem \ref{BDK3.9} (ii) $y_1$ has
to centralize an involution in the intersection, thus $y_1\in F$ must hold.
Hence 
it centralizes $L$. But it is impossible since  by Lemma \ref{intersection},
a subgroup of order $2$ cannot be the intersection of less than $f-1$ subgroups.  
Thus $d_c(L,G)=2f-2$.
\end{proof}

The next result is about some other subgroups of $F$.

\begin{proposition} 
\begin{itemize}

\item [(i)] For every  subgroup $F_1\leq F$ of  order $2$ we have   $d_c(F_1,G)=3$.
\item [(ii)] We have  $d_c(Z(F),G)=3$.
\item [(iii)] Let $K_4$ be a subgroup of type $(2,2)$ in $F$.  Then $d_c(K_4,G)=4$.
\item [ (iv)] Let $C_4$ be a cyclic subgroup of order $4$ in $F$. Then
$d_c(C_4, G)=3$. 
\end{itemize}
\end{proposition}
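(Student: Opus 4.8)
The plan is to treat each of the four subgroups via the $TI$-machinery established in Remark~\ref{TI} and Theorem~\ref{info}, since $F_1$, $Z(F)$ and $C_4$ are all nontrivial characteristic subgroups of $F$ (or conjugates thereof), while $K_4$ is the one case that fails to be $TI$ and must be handled by a direct intersection count. For (i), (ii) and (iv) the argument is essentially the same: each such subgroup $L$ is a nontrivial $TI$ set in $G$ for $q \geq 8$, so Remark~\ref{TI} gives immediately $d_c(L,G) = 3$ and $\delta_* = \delta = \delta^* = 2$. The only thing to check is that $F_1$, $Z(F)$ and $C_4$ really are characteristic in $F$: $Z(F)$ is characteristic by definition; the subgroups of order $2$ are all contained in $Z(F)$ and, since $H$ (hence $N_G(F)$) acts transitively on the involutions of $F$, they form a single conjugacy class of $TI$ sets; a cyclic subgroup $C_4$ of order $4$ is $TI$ because if $x \notin N_G(F)$ then $C_4 \cap C_4^x \leq F \cap F^x = 1$ as $F$ is $TI$, and if $x \in N_G(F)$ then the unique involution of $C_4$ lies in $Z(F)$ and is centralized, forcing $C_4 = C_4^x$ (using that the centralizer of a nontrivial element of $F$ lies in $F$, so the involution's centralizer controls things). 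I would spell out this last point carefully, as it is the only subtlety among (i), (ii), (iv).

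For (iii) the subgroup $K_4$ is \emph{not} $TI$: two distinct Klein four subgroups of $Z(F)$ meet in a subgroup of order $2$, so $\mathcal{U}_1 \neq \mathcal{U}_0$. The strategy is to show $d_c(K_4,G) = 4$ by proving $\mathcal{U}_1 \neq \mathcal{U}_2 = \mathcal{U}_\infty$ and then using Theorem~\ref{BDK3.9}(i) to conclude $d_c \leq 4$, together with $d_c \geq 4$ from the fact that $K_4$ is not normal (so $d_c \geq 3$) combined with an odd-depth obstruction. Concretely: the possible intersections $K_4 \cap K_4^{x_1} \cap \cdots \cap K_4^{x_i}$ are $K_4$ itself, a subgroup of order $2$ inside $Z(F)$, or the trivial group. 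An order-$2$ intersection is already achieved with one extra conjugate ($i=1$); the trivial intersection is achieved with two extra conjugates (e.g. one conjugate by an element of $N_G(F)$ to cut down to order $2$ inside $Z(F)$, then one conjugate by an element outside $N_G(F)$, which kills everything since $F$ is $TI$). Hence $\mathcal{U}_2 = \mathcal{U}_\infty$ and $\mathcal{U}_1 \neq \mathcal{U}_2$ (the trivial group is in $\mathcal{U}_2$ but, by Lemma~\ref{intersection}, an intersection of two copies of $K_4 \le Z(F)$ has order at least $2^{2+2-f} \ge 2$ for $f \ge 3$, wait — more simply, $K_4 \cap K_4^{x_1}$ always contains a common involution when both lie in $Z(F)$, and when they don't it is still never forced to be trivial from a single conjugate since $|K_4||K_4|/|Z(F)|$-type counting leaves a nontrivial overlap), so $d_c(K_4,G) \leq 4$ and $d_c(K_4,G) > 2$.

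It then remains to rule out $d_c(K_4,G) = 3$. For this I would invoke Theorem~\ref{BDK3.9}(ii): if $d_c \leq 3$ then for any $x_1, x_2 \in G$ there is $y_1$ with $K_4 \cap K_4^{x_1} \cap K_4^{x_2} = K_4 \cap K_4^{y_1}$ and $x_1 h x_1^{-1} = y_1 h y_1^{-1}$ for all $h$ in that intersection. Choosing $x_1 = 1$ and $x_2$ so that $K_4 \cap K_4^{x_2}$ has order $2$, the conjugation condition forces $y_1$ to centralize an involution of $F$, hence $y_1 \in F$ (the centralizer of a nontrivial element of $F$ lies in $F$), hence $y_1$ centralizes $Z(F) \supseteq K_4$, giving $K_4 \cap K_4^{y_1} = K_4$, a contradiction since the target intersection has order $2$. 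Therefore $d_c(K_4,G) = 4$.

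The main obstacle I anticipate is the fine bookkeeping in (iii): being careful that an intersection of two conjugates of $K_4$ is never trivial (so that $\mathcal{U}_1 \neq \mathcal{U}_2$ genuinely holds and the depth is not smaller), and correctly invoking Lemma~\ref{intersection} and the structure of $Z(F)$ as an $\mathbb{F}_2$-vector space to see that all order-$2$ intersections live inside $Z(F)$ and that cancellation stops there. The $TI$ cases (i), (ii), (iv) are routine once the characteristic/conjugacy observations are in place; the only place a small argument is needed is the cyclic $C_4$ case, where one must combine $F$ being $TI$ with the centralizer condition to verify the $TI$ property of $C_4$ itself.
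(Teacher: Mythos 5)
Parts (i) and (ii) of your argument are fine and coincide with the paper's: a subgroup of prime order is automatically a $TI$ set, and $Z(F)$ is characteristic in the $TI$ subgroup $F$, so Remark \ref{TI} applies directly. The genuine gaps are in (iv) and, to a lesser extent, in the bookkeeping of (iii).

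For (iv), the claim that $C_4=\langle c\rangle$ is a $TI$ set is false, and this is the heart of that case. You argue that if $x\in N_G(F)$ centralizes the unique involution $c^2\in Z(F)$ then $C_4^x=C_4$; it does not. Since $F$ has class $2$, for $x\in F$ one has $c^x=c[c,x]$ with $[c,x]\in Z(F)$, so $C_4^x=\{1,\,c[c,x],\,c^2,\,c^3[c,x]\}$ and $C_4^x=C_4$ only when $[c,x]\in\langle c^2\rangle$. The map $x\mapsto[c,x]$ is a homomorphism $F\to Z(F)$ with kernel $C_F(c)$ of order $2q$, so its image has order $q/2\geq 4$; hence there are $x\in F$ with $C_4^x\neq C_4$ but $C_4\cap C_4^x=\langle c^2\rangle$ of order $2$. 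So $C_4$ is not $TI$, Remark \ref{TI} does not apply, and your proof of (iv) collapses. The paper instead proves $d_c(C_4,G)=3$ by a case analysis on $|C_{x_1,x_2}|$ in which the order-$2$ case is handled by checking the extra conjugation condition of Theorem \ref{BDK3.9}(ii) explicitly (the order-$2$ intersection lies in $Z(F)$, and the relevant elements all act trivially on it); your proposal is missing exactly this case.

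For (iii), your closing obstruction argument (take $x_1=1$ and $x_2$ with $|K_4\cap K_4^{x_2}|=2$; Theorem \ref{BDK3.9}(ii) then forces $y_1$ to centralize an involution of $F$, hence $y_1\in F$, hence $K_4^{y_1}=K_4$, a contradiction) is correct and is essentially the paper's argument in a slightly cleaner configuration. But the surrounding logic is wrong in two places. First, ${\mathcal U}_1\neq{\mathcal U}_2$ is false, and if it were true it would prove $d_c(K_4,G)>4$ by Theorem \ref{BDK3.9}(i) — the opposite of what you want, since $d_c\leq 4$ is \emph{equivalent} to ${\mathcal U}_1={\mathcal U}_2$. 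Second, the supporting claim that two conjugates of $K_4$ never intersect trivially is false: for $x\notin N_G(F)$ one has $K_4\cap K_4^x\leq F\cap F^x=1$ because $F$ is $TI$ (this is precisely how the paper gets $\delta_*(K_4)=2$), and for $q\geq 32$ even two conjugates inside $Z(F)$ can meet trivially, Lemma \ref{intersection} only giving the useless lower bound $2^{4-f}\leq 1$. What you actually need, and what the paper proves, is ${\mathcal U}_1={\mathcal U}_2$: every triple intersection (of order $4$, $2$ or $1$) is already of the form $K_4\cap K_4^{y_1}$, the order-$2$ case because at least one of $K_4\cap K_4^{x_1}$, $K_4\cap K_4^{x_2}$ must itself equal that order-$2$ subgroup.
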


\begin{proof}
\begin{itemize}
\item [(i)] We note that  $F_1$ is a $TI$ set, so by Remark \ref{TI} we are done.

\item [(ii)] Follows from Remark \ref{TI}. 

\item [(iii)]  First we note that $\delta_{*}(K_4)=2$, since if we take an element $x$ outside 
$N_G(F)$ then $K_4\cap K_4^x=1$.
According to Theorem \ref{BDK3.11} (a), $3=2\delta_*(K_4)-1\leq d_c(K_4,G)$.
We prove that the combinatorial depth of $K_4$ is at most $4$. 
By Theorem \ref{BDK3.9},  we have to show that
for every $x_1,x_2\in G$ there  exists an element $y_1$ such that 
$K_4\cap K_4^{x_1}\cap K_4^{x_2}=K_4\cap K_4^{y_1}$ ($*$).
If the intersection on  the left hand side of ($*$) is $1$ then we can choose any $y_1$ outside $N_G(F)$.
We mention that $N_G(K_4)=C_G(K_4)=F$ since $3$ does not divide the order of $G$.
So if the intersection on
the left hand side of ($*$) is of order $4$ then $x_1,x_2\in F$, so we may choose 
$y_1=x_1$.
If the intersection on the left hand side of ($*$) is of order $2$
then  $x_1,x_2\in N_G(F)$. If $x_1\not\in F$
then we may choose $y_1=x_1$.
If the left hand side of ($*$) is of order $2$ and $x_2\not\in F$ then we may choose
$y_1=x_2$.

If the combinatorial depth would be $3$ then by Theorem \ref{BDK3.9} (ii) 
for every $k\in {K_4}_{\{x_1,x_2\}}$,   $x_1kx_1^{-1}=y_1ky_1^{-1}$ 
should also hold.
We mention that there exists always an intersection $K_4\cap K_4^{x_1}\cap K_4^{x_2}$
which is of order $2$. Since if we take an element $x\in N_G(F)$ that takes an
involution of $K_4$ to another, then $x\not \in F$, so it cannot normalize $K_4$, but since $K_4\cap K_4^x\ne 1$, it can be only of order $2$.
Let us suppose now that the above intersection is of order $2$. Let the elements of $K_4$
be $1,e_1,e_2,e_3$. Let us suppose that $x_1\in F$ and $x_2\not\in F$, but  $x_2\in N_G(F)$. Let us suppose that $e_1^{x_2}=e_2$, and the above intersection is
$\langle e_2\rangle$. Then the above intersection is $K_4\cap K_4^{x_2}$.
This is centralized by $x_1$.
But if $y_1$ acts on the intersection as $x_1$ then it should also centralize
this intersection. However, we know that the centralizer of every nontrivival element
is in $F$, so $y_1\in F$. Thus it centralizes $K_4$ and the intersections
$K_4\cap K_4^{x_1}\cap K_4^{x_2}$ and
$K_4\cap K_4^{y_1}$ cannot be equal. Thus $d_c(K_4,G)=4$.
\item [(iv)]
We first prove that the depth is  at most $4$.
Let $C$ be a cyclic subgroup of $G$ of order $4$. We may assume that
$C\leq F$.
If $C_{x_1,x_2}$ is of order $4$  then let us take $y_1=x_1$. 
If the order is $1$ then let $y_1\in G\backslash F$.
If the order is  $2$ and
$C_{x_1}$ is of order $2$ then let $y_1=x_1$. If $C_{x_1}$ is of order $4$ and 
$C_{x_2}$ is of order $2$ then let $y_1=x_2$. We have that $C$ has depth at most $4$.
To prove that the depth is $3$, we  only have to deal with the case when the order of the intersection $C_{x_1,x_2}$ is $2$.
If $C_{x_1,x_2}$ is of order $2$,
$C_{x_1}$ is of order $4$ and $C_{x_2}$ is of order $2$ then $x_2$ centralizes
the unique  subgroup of $C$ of order $2$ and $x_1$ centralizes the whole $C_{x_1,x_2}$.

Thus
$x_2=y_1$ is a good choice, since on the  intersection of order $2$ both of them are acting trivially. Thus the depth of $C$ is $3$ in $G$.
\end{itemize}
\end{proof}

\begin{remark} Note that elementary abelian subgroups of type $(2,2,2)$
have combinatorial depth $3$ in $Sz(8)$, since these subgroups are  centers
of some Sylow $2$-subgroups of $Sz(8)$.
Calculations with GAP \cite{GAP4}  show that elementary abelian subgroups of type
$(2,2,2)$ have combinatorial depth $4$ in $Sz(32)$ and in $Sz(128)$.
\end{remark}

\begin{proposition}\label{ngai}
The combinatorial depth of the Frobenius
groups $B_i=N_G(A_i)$  in $G$ is $4$, for $i=1,2$.
\end{proposition}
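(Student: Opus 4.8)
The plan is to bracket $d_c(B_i,G)$ between $3$ and $4$ using Theorem \ref{BDK3.9}, relying on two structural inputs from Theorems \ref{Suzuki} and \ref{info}: $A_i$ is a $TI$-set with $N_G(A_i)=B_i$, and $B_i=A_i\rtimes\langle t_i\rangle$ is a Frobenius group with cyclic complement $\langle t_i\rangle\cong C_4$ and kernel $A_i$ of odd order $n_i$. Since $A_i$ is characteristic in $B_i$, conjugates of $B_i$ correspond bijectively to conjugates of $A_i$ and $B_i$ is self-normalizing. First I would record a \emph{bound on intersections}: if $g\notin B_i$ then $|B_i\cap B_i^g|$ divides $4$. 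Indeed a nontrivial element of $(B_i\cap B_i^g)\cap A_i$ has odd order, hence as an odd-order element of the Frobenius group $B_i^g$ it lies in $A_i^g$; but $A_i\cap A_i^g=1$ because $A_i$ is $TI$ and $A_i^g\neq A_i$. So $(B_i\cap B_i^g)\cap A_i=1$ and $B_i\cap B_i^g$ embeds in $B_i/A_i\cong C_4$. Thus every proper intersection of conjugates of $B_i$ is trivial, of order $2$ (generated by an involution lying in a complement of $B_i$), or a full complement $C\cong C_4$ of $B_i$.

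For the lower bound, $B_i$ is a proper non-normal subgroup of the simple group $G$, so $\mathcal{U}_0\neq\mathcal{U}_1$ and $d_c(B_i,G)\geq 3$; moreover $Core_G(B_i)=1$ and, by a counting argument, two suitable conjugates already meet trivially, so $\delta_*(B_i)=2$. Hence $d_c(B_i,G)=4$ reduces to the two claims: (a) $d_c\leq 4$, and (b) $d_c\neq 3$.

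For (a) I would check $\mathcal{U}_1=\mathcal{U}_2$ through Theorem \ref{BDK3.9}(i): given $x_1,x_2$ find $y_1$ with $B_i\cap B_i^{x_1}\cap B_i^{x_2}=B_i\cap B_i^{y_1}$. Write $D$ for the triple intersection; if $D$ is $B_i$, $B_i\cap B_i^{x_1}$, or $B_i\cap B_i^{x_2}$ we are done, so assume $D$ is strictly smaller than both pairwise intersections. If $|D|=1$, take $y_1$ with $B_i\cap B_i^{y_1}=1$. If $|D|=4$, then $D=B_i\cap B_i^{x_1}$ (two order-$4$ subgroups, one inside the other) — excluded. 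So the only genuinely new case is $|D|=2$ with $B_i\cap B_i^{x_1}$ equal to the complement $C$ of $B_i$ through the involution $z$ generating $D$. Here I would use that $C$, as a $2$-group acting on the set of conjugates of $A_i$ (which has even cardinality, since the $2$-part of $|G:B_i|$ is $q^2/4$), fixes some conjugate $A_i^g\neq A_i$; then $C=B_i\cap B_i^g\in\mathcal{U}_1$. Likewise $\langle z\rangle=B_i\cap B_i^{g'}$ for a conjugate of $B_i$ through $z$ not containing $C$: one computes that only $|N_G(C)|/4=q/2$ conjugates of $B_i$ contain $C$ while $q^2/4$ contain $z$. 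Thus every admissible $D$ lies in $\mathcal{U}_1$, giving $d_c\leq 4$.

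The hard part is (b): by Theorem \ref{BDK3.9}(ii) I must exhibit $x_1,x_2$ for which \emph{no} $y_1\in G$ simultaneously satisfies $B_i\cap B_i^{y_1}=B_i\cap B_i^{x_1}\cap B_i^{x_2}$ and $x_1hx_1^{-1}=y_1hy_1^{-1}$ for all $h$ in that intersection. The natural candidate is the order-$2$ configuration above: an involution $z$, the complement $C$ of $B_i$ through $z$, an $x_1$ with $B_i\cap B_i^{x_1}=C$ and an $x_2$ with $B_i\cap B_i^{x_2}=\langle z\rangle$, so that the triple intersection is $\langle z\rangle$. The constraint then forces $y_1zy_1^{-1}=x_1zx_1^{-1}$, hence $y_1$ into a fixed coset of $C_G(z)$, while $B_i\cap B_i^{y_1}=\langle z\rangle$ forces $y_1$ to avoid the conjugates of $B_i$ through $z$ containing $C$. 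Showing these demands are incompatible is the crux, and it is here that one must invoke the $2$-local structure precisely: $C_G(z)$ is a Sylow $2$-subgroup $F_0$, $C_G(C)\leq F_0$ has order $2q$ with $N_G(C)=C_G(C)$ (no element of $G$ inverts an element of order $4$), and all involutions of $B_i$ are conjugate in $B_i$; quite possibly this also needs the distinction between the two $G$-classes of elements of order $4$, together with a direct GAP check of the smallest cases $Sz(8),Sz(32)$. I expect this incompatibility analysis, rather than any biset or inclusion-matrix bookkeeping, to be the main obstacle.
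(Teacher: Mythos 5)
Your reduction of the possible intersections $B_i\cap B_i^g$ (for $g\notin B_i$) to subgroups of a Frobenius complement $C_4$, via the $TI$ property of $A_i$, is exactly the paper's starting point, and your verification of $\mathcal{U}_1=\mathcal{U}_2$, hence $d_c(B_i,G)\le 4$, is sound and follows the same case analysis as the paper (indeed the order-$2$ case you labour over is vacuous under your own strictness assumption, since two distinct Frobenius complements meet trivially; and the counting argument you merely assert for the existence of $g$ with $B_i\cap B_i^g=1$ is carried out explicitly in the paper by bounding the number of $x$ with $B_i\cap B_i^x\ne 1$ by $q^2|A_i|^2<|G|$).

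The genuine gap is the lower bound $d_c(B_i,G)\ne 3$, which you explicitly leave unresolved and even suggest may require $2$-local analysis or GAP. Your candidate witness (triple intersection of order $2$) is also not the most tractable one. The paper instead takes the triple intersection to be a full complement $C$ of order $4$: choose $x_2$ with $B_i\cap B_i^{x_2}=C$ and $x_1\in B_i$ with $C^{x_1}\ne C$, so $B_i\cap B_i^{x_1}\cap B_i^{x_2}=C$. If $y_1$ satisfied both conditions of Theorem \ref{BDK3.9}(ii), then $C^{y_1}=C^{x_1}\ne C$ forces $y_1\notin N_G(C)$, while $C\le B_i^{y_1}$ forces $C=C_0^{y_1}$ for some complement $C_0\ne C$ of $B_i$. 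Writing $c,c_0$ for the involutions of $C,C_0$: both lie in the index-$2$ subgroup $\langle c_0\rangle A_i$ of $B_i$, so $cc_0\in A_i\setminus\{1\}$; likewise $c^{y_1}=c^{x_1}\in B_i$ and $c_0^{y_1}\in C\le B_i$ are distinct involutions of $B_i$, so $(cc_0)^{y_1}\in A_i\setminus\{1\}$. Hence $A_i\cap A_i^{y_1}\ne 1$, and the $TI$ property gives $y_1\in N_G(A_i)=B_i$, contradicting $B_i\cap B_i^{y_1}=C$. This is entirely elementary and needs no information about centralizers of involutions, conjugacy of order-$4$ elements, or machine computation; without some such argument your proposal establishes only $3\le d_c(B_i,G)\le 4$.
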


\begin{proof} We prove the statement  for $i=1$, the proof for $i=2$  is similar.
First we prove that the depth is at most $4$.
If  $B_1\cap B_1^{x_1}\cap B_1^{x_2}=1$ then we  
want to prove that there exists an element $y_1\in G$ such that 
$B_1\cap B_1^{y_1}=1$.  
Let us suppose that $B_1\cap B_1^x\ne 1$. Since $N_G(A_1)=N_G(B_1)=B_1$, if
$x\not\in B_1$ then by Theorem \ref{info} b) we have that $B_1\cap B_1^x\cap A_1=1$.  Thus $B_1\cap B_1^x$
contains an involution $e^x$ such that $e\in B_1$. 
Since the involutions in $B_1$ are conjugate inside $B_1$, there is an element
$y\in B_1$ such that $e^x=e^y$. Then $xy^{-1}\in C_G(e)$ and $x\in C_G(e)B_1$.
Thus the number of elements $x$ such that $e\in B_1 $ lies in the conjugate
$B_1^x$ is $|C_G(e)B_1|=|C_G(e)A_1|\leq |F||A_1|=q^2|A_1|$.
In order to get an upper bound on the number of elements $x\in G$ such that
$B_1\cap  B_1^x $ is nontrivial, we have to multiply this by the number
of involutions in $B_1$, which is $|A_1|$. Thus the upper bound is at most
$q^2|A_1|^2$ and since $|A_1|< 2q$, we have that $q^2|A_1|^2<q^2(q^2+1)(q-1)=|G|$.
Thus there is an element $y_1$ such that
$B_1\cap B_1^{y_1}=1$.

If both of $x_i$ are in $B_1$, then $y_1=x_1$ is a good choice.
If some of the $x_i$ is not inside $B_1$, then
the intersection cannot contain elements of $A_1$,
since this is a $TI$-set. Then $B_{x_1,x_2}:=
B_1\cap B_1^{x_1}\cap B_1^{x_2}$
is either of order $4$ or of order $2$. 
If it is of order $4$ then either $B_1\cap B_1^{x_1}$
or $B_1\cap B_1^{x_2}$ is of order $4$.
We choose $y_1=x_1$ or $y_1=x_2$ in this case to get the order $4$ intersection.
If there is no intersection of order $4$,
then there must be an  intersection of order $2$.
If the intersection $B_{x_1,x_2}$ is of order $2$, 
then if $B_1\cap B_1^{x_1}$ is of order $2$ then we
choose $y_1=x_1$. If it is of order  $4$, 
then the $x_2$ cannot be  in $B_1$ and $B_1\cap B_1^{x_2}$ is of order $2$, Then choose $y_1=x_2$. 
If $B_1\cap B_1^{x_1}$ and $B_1\cap B_1^{x_2}$ are both of order  $4$, then their intersection
cannot be of order $2$, since in a Frobenius group two complements  have  trivial 
intersection.
Thus we have proved that the depth of $B_1$ is at most $4$.
Now we prove that the depth cannot be $3$.
Let $B_1\cap B_1^{x_2}=C$ be of order $4$. This happens  e.g. if $x_2$ is inside the
centre of the Sylow $2$-subgroup of $G$ containing $C$, but not inside $C$.
Let $x_1\in B_1$ such that $C^{x_1}\ne C$. We want to prove that there is no $y_1$ such that 
$B_1\cap B_1^{y_1}=C$ and $c^{x_1}=c^{y_1}$ for every $c\in C$.
Let us suppose by contradiction that such $y_1$ would exist. Then $C^{y_1}\ne C$.
There exists a subgroup $C_0\leq B_1$ such that $C_0^{y_1}=C$.
Let $c_0, c$ be  involutions of $C_0$ and $C$ respectively. Then 
$c_0^{y_1},c^{y_1}\in B_1$, since $c^{y_1}=c^{x_1}\in B_1$. Since $\langle c_0\rangle A_1$ is a Frobenius
group of index $2$ in $B_1$, $c\in \langle c_0\rangle A_1$, and
$cc_0\in A_1$. Thus similarly $(cc_0)^{y_1}=c^{y_1}c_0^{y_1}\in A_1$.
Since $A_1$ is a $TI$-set
and $y_1\not\in B_1$, this is a contradiction. Hence the depth of $B_1$ is $4$.
\end{proof}

\begin{proposition} The combinatorial depth of 
$B_0=N_G(H)$ is  $4$ in $G$.
\end{proposition}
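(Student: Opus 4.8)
The plan is to imitate the two-step argument used for $B_1$ in Proposition~\ref{ngai}: first determine all intersections of conjugates of $B_0$, and then extract $d_c(B_0,G)$ from Theorem~\ref{BDK3.9}. Throughout I use that $B_0=N_G(H)$ is dihedral of order $2(q-1)$, with $H$ cyclic of the \emph{odd} order $q-1$ and with every element of $B_0\setminus H$ an involution (there are $q-1$ of them, forming a single $B_0$-class), and that $H$ is a $TI$-subgroup of $G$ by Theorem~\ref{info}~(b).

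First I would prove the analogue for $B_0$ of the divisibility $|B_1\cap B_1^x|\mid 4$ used in Proposition~\ref{ngai}, namely that $|B_0\cap B_0^x|\le 2$ whenever $x\notin B_0$. This follows from the $TI$-property of $H$: since $x\notin N_G(H)$ we have $H\cap H^x=1$, and because every element of $H$ has odd order while the coset $B_0^x\setminus H^x$ consists of involutions, $B_0\cap B_0^x$ meets $H$ trivially and hence is generated by at most one involution. Consequently every intersection of two or more distinct conjugates of $B_0$ is either trivial or a subgroup $\langle\tau\rangle$ of order $2$ with $\tau\in B_0$ an involution; conversely each such $\langle\tau\rangle$ does occur, since $C_G(\tau)$ is a Sylow $2$-subgroup of order $q^2$ with $C_G(\tau)\cap B_0=\langle\tau\rangle$, so that $B_0\cap B_0^x=\langle\tau\rangle$ for any $x\in C_G(\tau)\setminus\langle\tau\rangle$; and the trivial subgroup occurs as well, because $|\{x\in G:B_0\cap B_0^x\ne 1\}|\le (q-1)^2q^2<|G|$. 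Thus $\mathcal{U}_1(B_0,G)=\{B_0\}\cup\{\langle\tau\rangle:\tau\in B_0\text{ an involution}\}\cup\{1\}=\mathcal{U}_2(B_0,G)$, so $d_c(B_0,G)\le 4$ by Theorem~\ref{BDK3.9}~(i) (one reads off $\delta_*(B_0)=2$, $\delta(B_0)=3$); and $d_c(B_0,G)\ge 3$ because $B_0$ is a proper non-normal subgroup of the simple group $G$.

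It then remains to exclude $d_c(B_0,G)=3$ via Theorem~\ref{BDK3.9}~(ii), and this is the step I expect to be the real obstacle: since intersections of conjugates of $B_0$ only ever have order $1$, $2$ or $|B_0|$ — there being no analogue of the order-$4$ complements that drove the $B_1$ argument — one cannot copy Proposition~\ref{ngai}, and the contradiction must instead come from playing the dihedral structure of $B_0$ against the $TI$-ness of $H$. Concretely I would fix an involution $\tau\in B_0$, choose $x_2\in C_G(\tau)\setminus\langle\tau\rangle$ (so $B_0\cap B_0^{x_2}=\langle\tau\rangle$) and $x_1\in H\setminus\{1\}$ (so $x_1\in B_0$, and $\tau':=x_1^{-1}\tau x_1$ is an involution of $B_0$ with $\tau'\ne\tau$, since $C_{B_0}(\tau)=\langle\tau\rangle$ and $x_1$ has odd order); then $B_0\cap B_0^{x_1}\cap B_0^{x_2}=\langle\tau\rangle$. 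If some $y_1\in G$ satisfied $B_0\cap B_0^{y_1}=\langle\tau\rangle$ and induced on $\langle\tau\rangle$ the same conjugation as $x_1$ (as in the proof of Proposition~\ref{ngai}), i.e.\ $y_1^{-1}\tau y_1=\tau'$, then $\langle\tau'\rangle=y_1^{-1}\langle\tau\rangle y_1\le y_1^{-1}B_0y_1=B_0^{y_1}$ (as $\langle\tau\rangle\le B_0$) and $\langle\tau'\rangle\le B_0$ (as $\tau'=x_1^{-1}\tau x_1$ with $x_1,\tau\in B_0$), so $\langle\tau'\rangle\le B_0\cap B_0^{y_1}=\langle\tau\rangle$, forcing $\tau'=\tau$ — a contradiction. (Alternatively, in the $B_1$-style: $B_0\cap B_0^{y_1}\ne B_0$ gives $y_1\notin N_G(H)$, hence $H\cap H^{y_1}=1$, yet $\tau\tau'$ is a nontrivial element of $H$ that $y_1$ conjugates into a product of two involutions of $B_0$, hence back into $H$.) Thus the criterion of Theorem~\ref{BDK3.9}~(ii) fails for $(x_1,x_2)$, so $d_c(B_0,G)\ne 3$; combined with $d_c(B_0,G)\le 4$ this gives $d_c(B_0,G)=4$.
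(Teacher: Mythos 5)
Your proposal is correct and follows essentially the same route as the paper: the upper bound comes from classifying all intersections of conjugates of $B_0$ (order $1$, $2$, or $2(q-1)$, using the $TI$-property of $H$ and a counting argument for the trivial intersection), and the lower bound from the incompatibility of the required conjugation action with $B_0\cap B_0^{y_1}$ having order $2$. Your parenthetical ``alternative'' at the end (conjugating the product $\tau\tau'\in H$ of two involutions and invoking $TI$-ness of $H$ to force $y_1\in N_G(H)=B_0$) is in fact exactly the paper's argument; your primary version, with $x_1$ chosen in $H$, is a mild streamlining of it.
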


\begin{proof} We know from  Theorem \ref{Suzuki} that $|B_0|=2(q-1)$.
First we prove that the depth of $B_0$ is at most $4$.
By Theorem \ref{info}, $H$ is also a $TI$-subgroup of $G$, so $B_0\cap B_0^{x_1}\cap B_0^{x_2}$ is of order $1$ or $2$, if
$x_1,x_2$ are not both in $B_0$. If it is of order $1$ then
we have to find an element $y_1$ such that $B_0\cap B_0^{y_1}=1$.
We use similar calculations as in Proposition \ref{ngai}.
If $x\not\in B_0=N_G(H)$ then
$B_0\cap B_0^x\cap H=1$. 
Thus if $B_0\cap B_0^x\ne 1$ then this intersection contains an
involution $e^x$ such that $e\in B_0$.
Since the involutions of $B_0$ are conjugate in $B_0$,
there exists an element $y\in B_0$ such that $e^x=e^y$.
Then  $xy^{-1}\in C_G(e)$ and $x\in C_G(e)B_0$.
Since  the  number of involutions in $B_0$ is $|H|=q-1$
and $|C_G(e)B_0|(q-1)=q^2(q-1)^2<|G|$, there exists an element $y_1\in G$ such that
$B_0\cap B_0^{y_1}=1$.

If ${B_0}_{\{x_1,x_2\}}$ is of order $2$ then either $B_0\cap B_0^{x_1}$ or $B_0\cap B_0^{x_2}$ is of order $2$.
In the first case again  let $y_1=x_1$. In the second case let $y_1=x_2$. 
We prove that the depth cannot be $3$.
Let $C:=B_0\cap B_0^{x_2}$ be of order $2$, e.g.  let $x_2\in Z(S)\backslash B_0$,
where $S\in Syl_2(G)$ intersecting $B_0$. Let $x_1\in B_0\backslash C$. 
Assume by contradiction that there exists an element $y_1\in G$ such that
$C^{x_1}=C^{y_1}$ and $B_0\cap B_0^{y_1}=C$.
Since $C^{x_1}\ne C$ thus $C^{y_1}\ne C$.
There exists a subgroup $C_0$ of order $2$ in $B_0$   such that  $C_0^{y_1}=C$ and
$C^{y_1}=C^{x_1}\leq B_0$. Let $c_0,c$ be the
unique involutions in $C$ and $C_0$, respectively. Then $c_0c\in H$ and $(c_0c)^{y_1}={c_0}^{y_1}c^{y_1}\in H$. 
However, $H$ is a $TI$-subgroup and $H\cap H^{{y_1}^{-1}}$ contains a
nontrivial element, so $y_1\in B_0$, which is a contradiction.
Thus the depth of $B_0$ cannot be $3$.
\end{proof}

\begin{remark} For finding an element $y\in G $ such that $B_0\cap B_0^y=1$
we also could have used a similar argument to that of Lemma 2.1 in \cite{Frit}.
\end{remark}

We now study the depth of a Suzuki subgroup $G_1$ of the Suzuki group $G$.

\begin{remark}\label{123}
First we remark that $N_{G}(G_1)=G_1$, since  $G$
does not contain a subgroup, where $G_1$ is a normal subgroup, by Theorem \ref{Suzuki}.
The intersections of the form $G_1\cap G_1^u$ have the following properties:
\begin{itemize}
\item [1.] If $G_1\cap G_1^u$ contains
a nontrivial cyclic subgroup $C$ of
odd order then it also  contains 
a maximal cyclic subgroup $L$ of $G_1$ such that $C\leq L$.

To see this,  by Theorem \ref{info}, 
$C$ is contained in a maximal cyclic subgroup $L$ of $G_1$ of odd order.
It is also contained in a conjugate of $L^u$ in $G_1^u$, say in $L^{uy}$.
Since $L$ is a characteristic subgroup of a $TI$-subgroup of $G$ it is also
$TI$-subgroup in $G$.  
But then $L=L^{uy}$, and so $L$ is contained in the intersection.

\item [2.] The intersection $G_1\cap G_1^u$ 
cannot be a  Suzuki group $Sz(s_1)$  for $s_1<s$.

To see this, we assume by contradiction that the intersection would be $Sz(s_1)$, where $s_1>2$.
Then there would be a maximal cyclic subgroup in it
of order $s_1-1$ and this would be contained in a maximal cyclic subgroup of 
$G_1$. Since $s_1|s$, this maximal cyclic
subgroup is of order $s-1$ which cannot be
contained in the smaller $Sz(s_1)$, since $s-1\not||Sz(s_1)|$ , which contradicts the fact that
by 1. a maximal cyclic subgroup of this order  must be contained in the intersection.
If $s_1=2$ then $|Sz(s_1)|=20$ and its cyclic subgroup of order $5$ is contained in
a maximal cyclic group of odd order of $G_1$, which is contained in the intersection
$G_1\cap G_1^u$ by part 1. Since the intersection is $Sz(s_1)$ this maximal cyclic subgroup of $G_1$
should be of order $5$, which cannot happen.

\item [3.] The intersection $G_1\cap G_1^u$  cannot be a dihedral
group.

To see this, let us suppose by contradiction that $G_1\cap G_1^u$ is a dihedral
group. Then it cannot be a $2$-group of order at least $8$, because a
Suzuki $2$-group does not contain such a subgroup. 
Let  $e\in G_1\cap G_1^u$ be an involution. Let $e\in S_1\in  Syl_2(G_1)$
and $e\in S_2\in Syl_2(G_1^u)$ thus $S_1\ne S_2$.
Then $S_1,S_2$ are conjugate in $G$. Since the Sylow $2$-subgroups of $G$ are $TI$ sets and $S_1$ intersects $S_2$ they must be inside the same
Sylow $2$-subgroup $S$ of $G$.
If $S_1^k=S_2$ then $k\in N_G(S)$.
Moreover $k$ cannot be in $N_G(Z(S_1))$ since $Z(S_1)$ is an elementary abelian
subgroup of order at least $8$, which cannot lie in a dihedral group. 
Also $k$ cannot be a $2$-element, since then it would centralize all the involutions
in $S$, and then $Z(S_1)$ would be in the intersection. If $k$ is a $2'$-element,
then it belongs to some complement $H$ in the Frobenius group $N_G(S)=HS$. This group
contains $N_G(Z(S_1))$.  Thus 
$N_G(Z(S_1))=(SH)\cap N_G(Z(S_1))=S(H\cap N_G(Z(S_1)))=SH_1$,
by the Dedekind identity (modularity). 
So the complement $H_1$ can be chosen as
a subgroup of $H$.
Let us take an involution
$e_1\in S_1$ such that $e_1^k=e$.
Since $N_{G_1}(S_1)\leq N_G(Z(S_1))$,
by the Dedekind identity $N_{G_1}(S_1)=H_2S_1$ and $H_2\leq H_1^s\leq H^s$ for some $s\in S$.
Since $H_2$ is sharply $1$-transitive on
the involutions of $S_1$, there is an element $l\in H_2$ such that $e_1^l=e$.
However, since $e_1^k=e$ thus $e_1^{k^s}=e$ and $H^s$ is sharply $1$-transitive on the involutions of $Z(S)$, we have that $l=k^s$.
This is a contradiction, since $k^s$ does not normalize $Z(S_1)$, however
$l\in N_G(Z(S_1))$.
\end{itemize}
\end{remark}

\begin{proposition}\label{normalizer} Let $q=2^{2m+1}$, let $s=2^{2p+1}$ with $s^t=q$, let
$G_1=Sz(s)<G=Sz(q)$.
. Let $\pi\in Aut(GF(q))$  such that $\pi: x\mapsto x^{2^{m+1}}$.
Then the restriction of $\pi $ to $GF(s)$ is the automorphism of $GF(s)$
mapping $x$ to $x^{2^{p+1}}$.
Let \[u(a,b):=
\begin{pmatrix} 1 & 0 & 0 \\
	       a & 1 & 0\\
	       b & a\pi & 1
\end{pmatrix}\]
Let us suppose that $S:=\{u(a,b)\ a,b\in GF(q)\}$ and $S_1=\{u(a,b)| a,b\in GF(s)\}$.
(So  $S_1\in Syl_2(G_1)$,  $S\in Syl_2(G)$ and  $S_1< S$.)

Then $N_S(Z(S_1)u(1,0))=\{ u(c,d) |c,d\in GF(q), c+c\pi \in GF(s) \}=\{ u(c,d)|c\in GF(s), d\in GF(q)\}$
and
$N_S(S_1)=\{ u(c,d)| c,d \in GF(q), a(c\pi )+ c(a\pi)\in GF(s)$
${\rm for}$ $ {\rm every}$ $a \in GF(s)\}$.
Furthermore, $N_S(Z(S_1)u(1,0))=N_S(S_1)$ and $S>N_S(S_1)$.
Moreover, $N_S(S_1)=N_S(Z(S_1)x)$
also holds  for every nontrivial coset $Z(S_1)x$ in $S_1$.
\end{proposition}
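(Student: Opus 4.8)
The plan is to reduce the whole statement to one arithmetic fact about the automorphism $\pi$. First I would record the group law in $S$: multiplying the matrices $u(a,b)$ gives
\[
u(a,b)\,u(c,d)=u\bigl(a+c,\ b+d+(a\pi)c\bigr),
\]
using that $\pi$ is additive so that $(a\pi)+(c\pi)=(a+c)\pi$. From this $u(c,d)^{-1}=u\bigl(c,\ d+(c\pi)c\bigr)$, and a second short computation (in which the $d$-terms and the $(c\pi)c$-terms cancel) yields the conjugation rule
\[
u(c,d)^{-1}\,u(a,b)\,u(c,d)=u\bigl(a,\ b+a(c\pi)+c(a\pi)\bigr),
\]
valid for all $a,b,c,d\in GF(q)$. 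In particular $Z(S)=\{u(0,b):b\in GF(q)\}$ and $Z(S_1)=\{u(0,b):b\in GF(s)\}$, and the coset $Z(S_1)u(a,b)$ equals $\{u(a,b'):b'\in GF(s)\}$; thus such a coset depends only on $a\in GF(s)$ and is nontrivial precisely when $a\neq0$. (That $\pi$ maps $GF(s)$ onto itself with restriction $x\mapsto x^{2^{p+1}}$ is routine: $\mathrm{Gal}(GF(q)/GF(2))$ is cyclic, so $GF(s)$ is stable under every field automorphism, and $2^{m+1}\equiv2^{p+1}\pmod{s-1}$ because $2m+1=t(2p+1)$.)

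Next I would translate the normalizer conditions. By the conjugation rule, $u(c,d)$ carries the coset $Z(S_1)u(a,0)$ to $\{u(a,\,b'+a(c\pi)+c(a\pi)):b'\in GF(s)\}$, which equals $Z(S_1)u(a,0)$ iff $a(c\pi)+c(a\pi)\in GF(s)$; and $u(c,d)$ normalizes $S_1$ iff this holds for every $a\in GF(s)$ (for $a=1$ one gets the condition $c+c\pi\in GF(s)$, since $1\pi=1$). One inclusion is immediate: if $c\in GF(s)$ then every $a(c\pi)+c(a\pi)$ lies in $GF(s)$. Hence the proposition will follow at once from the

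\textbf{Key claim.} If $a\in GF(s)^{\times}$ and $a(c\pi)+c(a\pi)\in GF(s)$, then $c\in GF(s)$.

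I expect the Key claim to be the only real obstacle; I would prove it by Galois theory of $GF(q)/GF(s)$. Write $q=s^{t}$; since $q$ and $s$ are odd powers of $2$ we have $2m+1=t(2p+1)$, so $t$ is odd. Let $\tau$ generate the cyclic group $\mathrm{Gal}(GF(q)/GF(s))$, so $c\in GF(s)\iff c\tau=c$; note $\pi$ fixes $GF(s)$ setwise and commutes with $\tau$. Applying $\tau$ to $a(c\pi)+c(a\pi)=\lambda\in GF(s)$, and using $a\tau=a$ and $(a\pi)\tau=a\pi$, gives $a\bigl((c\tau)\pi\bigr)+(c\tau)(a\pi)=\lambda$; adding the two relations (characteristic $2$) yields $a(\delta\pi)=(a\pi)\delta$ with $\delta:=c+c\tau$. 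If $\delta\neq0$, dividing by $a\delta$ gives $(\delta/a)^{2^{m+1}-1}=1$ in $GF(q)^{\times}$; but $\gcd(2^{m+1}-1,\,2^{2m+1}-1)=2^{\gcd(m+1,2m+1)}-1=2^{1}-1=1$, so $\delta=a$, hence $c\tau=c+a$ and then $c\tau^{2}=(c+a)\tau=c$. Since $t$ is odd, $\tau^{2}$ again generates $\mathrm{Gal}(GF(q)/GF(s))$, so $c\tau^{2}=c$ forces $c\in GF(s)$, whence $c\tau=c\neq c+a$, a contradiction. Therefore $\delta=0$, i.e.\ $c\in GF(s)$.

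Finally I would assemble the conclusions. By the Key claim with $a=1$,
\[
N_{S}(Z(S_1)u(1,0))=\{u(c,d):c+c\pi\in GF(s)\}=\{u(c,d):c\in GF(s),\ d\in GF(q)\},
\]
and the same description holds for $N_S(S_1)$ (the trivial inclusion together with the Key claim applied to $a=1$), so $N_S(Z(S_1)u(1,0))=N_S(S_1)$. This subgroup has order $sq<q^{2}=|S|$, so $S>N_S(S_1)$. For an arbitrary nontrivial coset $Z(S_1)x$ we may take $x=u(a,0)$ with $a\in GF(s)^{\times}$, and then $N_S(Z(S_1)x)=\{u(c,d):a(c\pi)+c(a\pi)\in GF(s)\}=\{u(c,d):c\in GF(s),\ d\in GF(q)\}=N_S(S_1)$, again by the Key claim. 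All the remaining steps are the routine matrix computations and the two elementary facts that $t$ is odd and $\gcd(2^{m+1}-1,2^{2m+1}-1)=1$.
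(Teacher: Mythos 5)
Your proof is correct, and the matrix computations (group law, inverse, the conjugation formula $u(c,d)^{-1}u(a,b)u(c,d)=u(a,\,b+a(c\pi)+c(a\pi))$, and the identification of $Z(S_1)$ and its cosets) coincide with what the paper does. Where you genuinely diverge is in the arithmetic core. The paper proves only the special case $a=1$ of your Key claim — that $c+c\pi\in GF(s)$ forces $c\in GF(s)$ — and does so by analysing the $GF(2)$-linear map $id+\pi$: its kernel is $GF(2)$ because $\pi$ has no nontrivial fixed points, $1$ is not in its image, hence $GF(q)=\mathrm{Im}(id+\pi)\oplus GF(2)$ and likewise inside $GF(s)$, and the claim follows by comparing images. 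You instead prove the stronger statement for every $a\in GF(s)^{\times}$ by a Galois argument with a generator $\tau$ of $\mathrm{Gal}(GF(q)/GF(s))$, exploiting that $t$ is odd and that $\gcd(2^{m+1}-1,2^{2m+1}-1)=1$; both of these auxiliary facts check out, and the contradiction you derive from $\delta\neq0$ is sound. The payoff of your more general claim is that the final assertion $N_S(Z(S_1)x)=N_S(S_1)$ for every nontrivial coset drops out directly from the explicit normalizer condition, whereas the paper must invoke the transitivity of $N_{G_1}(S_1)$ on the nontrivial cosets of $S_1/Z(S_1)$ (citing Huppert--Blackburn) to transfer the $a=1$ case to the other cosets. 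Both routes are valid; the paper's is marginally more elementary, yours is more uniform and self-contained.
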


\begin{proof} The restriction of $\pi $ to $GF(s)$ is an automorphism
of it having the property that the square of it raises every element to 
the second power. Since such an automorphism is unique it has to act 
the way as it is written in the Proposition.

Since multiplication in $S$ is defined by $u(a,b)u(c,d)=u(a+c,c(a\pi )+b+d)$, thus
$u(a,b)^{-1}=u(a,b+a(a\pi ))$ and 
$Z(S_1)=\{ u(0,b)|b\in GF(s) \}$.  Furthermore $u(0,b)u(1,0)=u(1,b)$.
Similarly
$u(c,d+c(c\pi))u(1,b)u(c,d)=u(1,b+c+(c\pi))$, hence $N_S(Z(S_1)u(1,0))=\{
u(c,d)|c,d\in GF(q), c+(c\pi)\in GF(s) \}$.
To calculate the normalizer of $S_1$ we note that
$u(a,b)\in S_1$ iff $a,b\in GF(s)$.
We have to calculate when $u(c,d+c(c\pi))u(a,b)u(c,d)=
u(c+a,d+c(c\pi)+b+c(a\pi))u(c,d)=u(a,d+b+c(c\pi )+c(a\pi )+d+(c+a)(c\pi ))=u(a,b+c(a\pi )+a(c\pi ))$
belongs to $S_1$. 
This happens if and only if $c(a\pi )+a(c\pi )\in GF(s)$
for every $a\in GF(s)$, 
thus $N_S(S_1)=\{u(c,d)|c,d\in GF(q),c(a\pi )+a(c\pi) \in GF(s)$ 
${\rm for}$ ${\rm every}$ $ a\in GF(s) \}$.

Obviously $N_S(S_1)\leq N_S(Z(S_1)u(1,0))$. We want to prove that if  $c\in GF(q)$
and 
$c+(c\pi)\in GF(s)$ then $c\in GF(s)$. This would imply that the inequality 
cannot be strict. The map 
$id+\pi $ is  $GF(2)$-linear  from $GF(q)$ into itself.
Since $\pi $ has no nontrivial fixed points, the kernel of $id+\pi$ is $GF(2)$.
This map is not surjective, the identity of $GF(q)$ is not in the image of
$id+\pi $. For
if $x\pi +x=1$ then $x\pi =x+1$  and $x^2=x\pi^2 = x+2=x$ and thus $x\in \{0,1\}$.
Since the image of $0$ and $1$  under $id+\pi$ is zero, $1$ is not in  the 
image
of $id+\pi $.
Thus $GF(q)=Im(id+\pi)\oplus GF(2)$ and similarly
$GF(s)=Im(id+\pi)_{GF(s)}\oplus GF(2)$. 
However, $Im(id+\pi)_{GF(s)}=Im(id+\pi)\cap GF(s)$ by the Dedekind identity.
Let $c\in GF(q)\backslash GF(s)$. Then if $c(id +\pi) $ would  be in $GF(s)$
then there would be an element $d\in GF(s)$ such that $d(id+\pi)=c(id+\pi)$.
But then $d-c\in  GF(2)$, so $d\in GF(s)$, which is a contradiction. 

It is easy to see that $S>N_S(S_1)$, since $|S|=q^2$, however $|N_S(S_1)|=qs$.
If $Z(S_1)x$ is another nontrivial coset of $Z(S_1)$ in $S_1$ then
since by \cite[Thm.~6.8 in  Ch.~VIII]{H-Bl_II}, $N_{G_1}(S_1)$ acts transitively on 
cosets of $S_1/Z(S_1)$, there is an element $h\in N_{G_1}(S_1)$
such that $(Z(S_1)u(1,0))^h=Z(S_1)x$. Then $N_S(S_1)^h=N_S(Z(S_1)u(1,0))^h=N_S(Z(S_1)x)$. However, $N_{G_1}(S_1)\leq N_G(S)$, so it normalizes $N_S(S_1)$.
Thus $N_S(Z(S_1)x)=N_S(S_1)^h=N_S(S_1)$. So we are done.
\end{proof}

\begin{proposition}\label{S_1cap S_1^x} The intersection of two conjugates of $S_1\in Syl_2(G_1)$ in
$G$ 
can be either $1$ or $S_1$ or $Z(S_1)$, and all these intersections occur.
\end{proposition}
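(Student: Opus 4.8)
The plan is to reduce, using that the Sylow $2$-subgroups of $G$ are $TI$ sets (Theorem~\ref{info}), to the case in which the conjugating element lies in $N_G(S)$ for a fixed $S\in Syl_2(G)$ with $S_1<S$, and then to compute in the matrix model of Proposition~\ref{normalizer}; the only genuinely non-routine point will be a statement about an additive polynomial over $GF(q)$, to be settled with the trace form. So suppose $u\in G$ with $S_1\cap S_1^u\ne 1$. Then $1\ne S_1\cap S_1^u\le S\cap S^u$, and since $S$ is $TI$ this forces $S=S^u$, i.e.\ $u\in N_G(S)=SH$ with $H=\{M(\lambda):\lambda\in GF(q)^{\times}\}$ the cyclic complement. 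Conjugation by $M(\lambda)$ acts on $S$ by $u(a,b)\mapsto u(\lambda a,\ \lambda^{1+2^{m+1}}b)$; here $\gcd(1+2^{m+1},q-1)=1$ and $\gcd(1+2^{m+1},s-1)=1$ (short arguments on $\mathrm{ord}_d(2)$, using $\gcd(2m+2,2m+1)=1$ and $\gcd(2m+2,2p+1)=\gcd(1,2p+1)=1$), so the $(1+2^{m+1})$-power maps of $GF(q)^{\times}$ and $GF(s)^{\times}$ are bijections and $\lambda\in GF(s)\iff\lambda^{1+2^{m+1}}\in GF(s)$. Writing $u=u_0h$ with $u_0\in S$, $h=M(\lambda)$, the element $h$ acts on $S/Z(S)\cong GF(q)^{+}$ by multiplication by $\mu:=\lambda$ and on $Z(S)\cong GF(q)^{+}$ by $\mu^{1+2^{m+1}}$; under these identifications $Z(S_1)$ and $S_1Z(S)/Z(S)$ both correspond to the subfield $GF(s)$.

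Put $D:=S_1\cap S_1^u$. If $\mu\notin GF(s)$, the image of $D$ in $S/Z(S)$ lies in $GF(s)\cap\mu\,GF(s)=\{0\}$ (distinct ``$GF(s)$-lines'' in $GF(q)$ meet only in $0$), so $D\le Z(S)$; then $D\le Z(S_1)\cap Z(S_1)^{u}=GF(s)\cap\mu^{1+2^{m+1}}GF(s)=\{0\}$ since $\mu^{1+2^{m+1}}\notin GF(s)$, whence $D=1$. If $\mu\in GF(s)$, conjugation by $h=M(\lambda)$ preserves $S_1=\{u(a,b):a,b\in GF(s)\}$, so $S_1^{u}=(S_1^{u_0})^{h}=S_1^{\,h^{-1}u_0h}$ with $h^{-1}u_0h\in S$; replacing $u$ by $h^{-1}u_0h$, and then by $u(c,0)$ (the $Z(S)$-part of an element of $S$ acts trivially by conjugation), we may take $u=u(c,0)$, $c\in GF(q)$. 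Since conjugation by $u(c,0)$ fixes first coordinates, $u(a,b)^{u(c,0)}=u\bigl(a,\ b+c(a\pi)+a(c\pi)\bigr)$, so
\[
S_1\cap S_1^{u(c,0)}=\{\,u(a,b):a\in W_c,\ b\in GF(s)\,\},\qquad W_c:=\{\,a\in GF(s):c(a\pi)+a(c\pi)\in GF(s)\,\},
\]
a $GF(2)$-subspace of $GF(s)$. If $c\in GF(s)$ then $a\pi,c\pi\in GF(s)$ give $W_c=GF(s)$ and $D=S_1$. The crux is the claim that \emph{$c\notin GF(s)$ forces $W_c=\{0\}$}, which makes $D=\{u(0,b):b\in GF(s)\}=Z(S_1)$. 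Granting this, the possible intersections are exactly $1$, $Z(S_1)$, $S_1$, and all occur: $u\notin N_G(S)$ gives $1$; $u=1$ gives $S_1$; $u=u(c,0)$ with $c\in GF(q)\setminus GF(s)$ gives $Z(S_1)$.

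To prove the claim it suffices, for fixed $a\in GF(s)^{\times}$, to show the $GF(2)$-linear map $\ell_a(x):=x(a\pi)+a(x\pi)$ has $\ell_a^{-1}(GF(s))=GF(s)$; then $c\notin GF(s)$ forces $\ell_a(c)\notin GF(s)$, i.e.\ $a\notin W_c$. Write $\ell_a(x)=a\cdot m(x)$ with $m(x)=\rho x+x\pi$, $\rho:=a^{2^{m+1}-1}\in GF(s)^{\times}$, so $\ell_a^{-1}(GF(s))=m^{-1}(GF(s))$. As $\gcd(2^{m+1}-1,q-1)=1$ ($\gcd(m+1,2m+1)=1$), $x\mapsto x^{2^{m+1}-1}$ is a bijection of $GF(q)^{\times}$, so $\ker m=\{x:x\pi=\rho x\}$ has order $2$; since $x=a$ is a root, $\ker m=\{0,a\}\subseteq GF(s)$ and $\mathrm{Im}(m)$ is a $GF(2)$-hyperplane of $GF(q)$. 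Pairing with the trace form $\langle x,y\rangle=\mathrm{Tr}_{GF(q)/GF(2)}(xy)$ and using $\mathrm{Tr}_{GF(q)/GF(2)}(w)=\mathrm{Tr}_{GF(q)/GF(2)}(w^{2^{m}})$ gives $\mathrm{Im}(m)^{\perp}=\{z:z^{2^{m}}=\rho z\}$, again of order $2$ (as $\gcd(2^{m}-1,q-1)=1$), whose nonzero element $z_0$ lies in $GF(s)$ (the equation $z^{2^{m}-1}=\rho$ is solvable in $GF(s)$ since $\gcd(2^{m}-1,s-1)=1$, using $2m\equiv-1\pmod{2p+1}$, and by uniqueness in $GF(q)^{\times}$ that solution equals $z_0$). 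Now $\mathrm{Tr}_{GF(q)/GF(s)}(z_0)=[GF(q):GF(s)]\,z_0=z_0\ne 0$, since $[GF(q):GF(s)]=(2m+1)/(2p+1)$ is odd and the characteristic is $2$; hence $z_0\notin GF(s)^{\perp}=\ker\mathrm{Tr}_{GF(q)/GF(s)}$, so $GF(s)\not\subseteq\mathrm{Im}(m)$. Thus $GF(s)\cap\mathrm{Im}(m)$ is a proper subspace of $GF(s)$ containing $m(GF(s))$, which has codimension $1$ in $GF(s)$ (as $\ker m\cap GF(s)=\{0,a\}$); so $GF(s)\cap\mathrm{Im}(m)=m(GF(s))$, whence $\dim m^{-1}(GF(s))=\dim\ker m+\dim\bigl(\mathrm{Im}(m)\cap GF(s)\bigr)=\dim GF(s)$, and as $GF(s)\subseteq m^{-1}(GF(s))$ we conclude $m^{-1}(GF(s))=GF(s)$.

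The main obstacle is precisely this last claim — $W_c=\{0\}$ for $c\notin GF(s)$, equivalently that the additive polynomial $\rho x+x^{2^{m+1}}$ does not surject onto $GF(s)$. Everything else (the reduction to $N_G(S)$, then to $u=u(c,0)\in S$, and the bookkeeping with the matrix model) is routine given Proposition~\ref{normalizer} and the $TI$ property; the one place that needs real input is pairing $\mathrm{Im}(m)$ against the trace form and using that the index $[GF(q):GF(s)]$ is odd.
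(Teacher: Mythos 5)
Your proof is correct, but it takes a genuinely different route from the paper's. The paper argues group-theoretically throughout: after the same $TI$ reduction to $u\in N_G(S)$, it rules out a nontrivial intersection properly contained in $Z(S_1)$ by comparing the sharply $1$-transitive actions of the Frobenius complements of $N_G(S)$ and $N_{G_1}(S_1)$ on involutions, and rules out an intersection strictly between $Z(S_1)$ and $S_1$ by the sharply $1$-transitive action on $S/Z(S)$ together with the identity $N_S(Z(S_1)x)=N_S(S_1)$ from Proposition~\ref{normalizer}; the example $u\in S\setminus N_S(S_1)$ realizing $Z(S_1)$ is the same in both arguments. You instead push the computation all the way down to an explicit representative $u(c,0)$ and reduce everything to the single field-theoretic assertion that $W_c=\{0\}$ for $c\notin GF(s)$ — a statement strictly stronger than what Proposition~\ref{normalizer} records (which only says that the $c$ working for \emph{all} $a\in GF(s)$ form $GF(s)$), and genuinely needed for your route, since a nonzero proper $W_c$ would produce an intermediate intersection. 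Your trace-form argument for it checks out: the gcd computations ($\gcd(m+1,2m+1)=\gcd(m,2m+1)=1$, and $\gcd(m,2p+1)=\gcd(p,2p+1)=1$ via $m\equiv p\pmod{2p+1}$) are right, $\mathrm{Im}(m)^{\perp}=\{z:z^{2^m}=\rho z\}=\{0,z_0\}$ with $z_0\in GF(s)$ is correct, and the oddness of $[GF(q):GF(s)]$ gives $\mathrm{Tr}_{GF(q)/GF(s)}(z_0)=z_0\ne 0$, so $GF(s)\not\subseteq\mathrm{Im}(m)$ and the dimension count closes the argument. The trade-off: the paper's proof is shorter given the Frobenius machinery it reuses elsewhere and avoids any further field theory, while yours is more explicit — it identifies exactly which elements of $N_G(S)$ produce which intersection and isolates the one nontrivial arithmetic fact (the behaviour of the additive polynomial $\rho x+x\pi$ relative to the subfield) that makes the statement true.
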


\begin{proof} 
Since $S_1<S\in Syl_2(G)$ and $S$ is $TI$, if we take $u$ outside 
the normalizer of $S$, then $S_1\cap S_1^u=1$.
Let $u\in N_{G}(S_1)$. Then $S_1\cap S_1^u=S_1$.

Let us suppose that  $S_1\in Syl_2(G_1)$ and $S_1<S\in Syl_2(G)$,
$1\ne S_1\cap S_1^u=Z<Z(S_1)$.  Then $u\in N_G(S)$. In fact $u$ is of odd order, and it belongs to a complement $K$ in the Frobenius group $N_G(S)$.
By the Dedekind identity the complement $K_1$ of the subgroup $N_G(Z(S_1))=K_1S$
can be chosen to be a subgroup of $K$.
Let $1\ne x\in Z$. Then
$o(x)=2$. Thus there exists an element $y\in Z(S_1)$ such that $y^u=x$. 
Since the Frobenius complement $K_2$ of $N_{G_1}(S_1)=K_2S$ acts sharply $1$-transitively
on the involutions of $S_1$,
there exists an  element   $n\in K_2$ such that 
$y^n=x$. 
There is an element $s\in S$ such that
$n_1=n^s\in K_1\leq K$. Then $y^{n_1}=x$ and $y^u=x$. Thus $u{n_1}^{-1}\in C_G(y)\leq S$. On the other hand it belongs to $K$.
Hence $u=n_1$. However, $u=n_1\in N_G(Z(S_1))$ and hence $S_1\cap S_1^u\geq Z(S_1)$,
which contradicts to our assumption.

Thus the intersection cannot be a nontrivial  proper subgroup of $Z(S_1)$.

By the previous argument we also have that
if $S_1\cap S_1^u\ne 1$ then $S_1\cap S_1^u\geq Z(S_1)$. Thus $u\in N_G(S)$.
Now we want to show that if $S_1\cap S_1^u>Z(S_1)$ then $S_1\cap S_1^u=S_1$.

Let us suppose now that the $Z(S_1)<S_1\cap S_1^u =L<S_1$.
First we note that $Z(S_1)\leq Z(S_1)^u=Z(S_1^u)<S_1^u$, but since their order is the same $Z(S_1)=Z(S_1)^u$.
Let $Z(S_1)x\in S_1\cap S_1^u$, where $o(x)=4$.
Then since $u^{-1}$ fixes $Z(S_1)$ it takes the coset of $x$ to another coset
in $S_1$, say  ${Z(S_1)x}^{u^{-1}}=Z(S_1)y$,
where $y\in S_1$ has  order $4$.
By \cite[Thm.~6.8 in Ch.~VIII.]{H-Bl_II}
the complement of $N_{G}(S)$ acts sharply $1$-transitively on $S/Z(S)$ and
the complement of $N_{G_1}(S_1)$ acts sharply $1$-transitively on $S_1/Z(S_1)$.
Thus there is an element $h\in N_{G_1}(S_1)\leq N_G(S)$ such that $(Z(S_1)x)^h=Z(S_1)y$.
But then $hu\in N_G(Z(S_1)x)$ and hence $hu\in N_G(Z(S)x)$. However, since $hu\in N_G(S)$ and the elements of the complement 
act sharply $1$-transitively  on $S/Z(S)$, 
$hu\in S$. Thus $hu\in N_{S}(Z(S_1)x)=N_S(S_1)$  
by Proposition \ref{normalizer}. Thus $u$ normalizes $S_1$, which is a contradiction.

The intersection can also be $Z(S_1)$, e.g. let $u\in S\backslash N_S(S_1)$, then
$S_1>S_1\cap S_1^u\geq Z(S_1)$, so the intersection is $Z(S_1)$.
\end{proof}

\begin{corollary}\label{dcS_1} Let $S_1\in Syl_2(G_1)$, where $G_1=Sz(s)$ is a Suzuki subgroup
of $G=Sz(q)$, where $s^t=q$ and $q=2^{2m+1}$. Then the combinatorial depth $d_c(S_1,G)=3$.
\end{corollary}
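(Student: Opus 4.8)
The plan is to prove the two inequalities $d_c(S_1,G)\ge 3$ and $d_c(S_1,G)\le 3$. For the lower bound, Proposition~\ref{S_1cap S_1^x} exhibits two conjugates of $S_1$ meeting trivially, while $Core_G(S_1)=1$ since $G$ is simple and $1\ne S_1\ne G$; hence $\delta_*(S_1)=2$, and Theorem~\ref{BDK3.11}~(a) gives $d_c(S_1,G)\ge 2\delta_*(S_1)-1=3$. (Equivalently, $S_1$ is not normal in $G$, so $d_c(S_1,G)\ge 3$ already by Theorem~\ref{BDK3.9}~(i).)

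For the upper bound I would check the criterion of Theorem~\ref{BDK3.9}~(ii) with $i=2$: for arbitrary $x_1,x_2\in G$ one must produce $y_1\in G$ with $D:=S_1\cap S_1^{x_1}\cap S_1^{x_2}=S_1\cap S_1^{y_1}$ and $x_1hx_1^{-1}=y_1hy_1^{-1}$ for all $h\in D$. Let $S\in Syl_2(G)$ be the Sylow subgroup containing $S_1$. Since $D$ is contained in each pairwise intersection $S_1\cap S_1^{x_j}$, and each of these lies in $\{1,Z(S_1),S_1\}$ by Proposition~\ref{S_1cap S_1^x}, and since two nontrivial such pairwise intersections both contain $Z(S_1)$, we get $D\in\{1,Z(S_1),S_1\}$. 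I would then split into three cases. If $S_1\cap S_1^{x_1}=D$ (in particular if $D=S_1$), take $y_1=x_1$; both conditions are immediate. If $D=1$ (and $S_1\cap S_1^{x_1}\ne 1$), take any $y_1\in G\setminus N_G(S)$, which exists because $|N_G(S)|=q^2(q-1)<|G|$; then $S_1\cap S_1^{y_1}\le S\cap S^{y_1}=1$ since $S$ is a $TI$-set, and the second condition is vacuous. The only remaining case is $D=Z(S_1)$ with $S_1\cap S_1^{x_1}=S_1$, that is $x_1\in N_G(S_1)$, and $S_1\cap S_1^{x_2}=Z(S_1)$.

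In this last case I would argue as follows. The group $Z(S_1)$ is elementary abelian and consists of involutions of $S$ together with the identity, so $Z(S_1)\le Z(S)$; consequently, for each involution $e\in Z(S_1)$ one has $S\le C_G(e)$, and since in $G$ the centralizer of an involution of $S$ lies in $S$, also $C_G(e)=S$. Thus $C_G(Z(S_1))=S$. By Proposition~\ref{normalizer} we have $N_S(S_1)<S$, so we may pick $g\in S\setminus N_S(S_1)$, and by Proposition~\ref{S_1cap S_1^x} this $g$ satisfies $S_1\cap S_1^g=Z(S_1)$. Now set $y_1:=x_1g$. As $x_1$ normalizes $S_1$ we have $S_1^{y_1}=(S_1^{x_1})^g=S_1^g$, hence $S_1\cap S_1^{y_1}=Z(S_1)=D$; and as $g\in S=C_G(Z(S_1))$ we get $y_1hy_1^{-1}=x_1(ghg^{-1})x_1^{-1}=x_1hx_1^{-1}$ for every $h\in Z(S_1)=D$. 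This verifies the criterion, so $d_c(S_1,G)\le 3$, and together with the lower bound $d_c(S_1,G)=3$.

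I expect the last case to be the main obstacle: the single element $y_1$ must simultaneously collapse the triple intersection down to exactly $Z(S_1)$ and act on $Z(S_1)$ the way $x_1$ does, and it is precisely the identity $C_G(Z(S_1))=S$ — which lets us right-multiply $x_1$ by an arbitrary element of $S$ without changing its action on $Z(S_1)$ — together with $x_1\in N_G(S_1)$ that makes $y_1=x_1g$ accomplish both at once. Everything else is routine bookkeeping with Propositions~\ref{normalizer} and~\ref{S_1cap S_1^x} and the $TI$-property of $S$.
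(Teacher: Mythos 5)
Your proof is correct, and its skeleton --- lower bound from non-normality (or $\delta_*=2$), upper bound by checking the criterion of Theorem~\ref{BDK3.9}(ii) with $i=2$ against the classification of pairwise intersections in Proposition~\ref{S_1cap S_1^x} --- is the same as the paper's. The differences lie in two places and are genuine simplifications. First, you get $D\in\{1,Z(S_1),S_1\}$ immediately by writing $D$ as the intersection of the two pairwise intersections, which form a chain; the paper instead runs a separate contradiction argument to exclude $1<D<Z(S_1)$. Second, and more substantially, in the critical case $x_1\in N_G(S_1)$, $D=Z(S_1)$ the paper splits according to whether $x_1$ is a $2$-element or has odd order, and in the odd-order case builds $y_1$ as a conjugate $x_1^s$ lying in a Frobenius complement of $N_G(Z(S_1))$ not contained in $N_G(S_1)$, invoking sharp $1$-transitivity of the complement on involutions. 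Your uniform choice $y_1=x_1g$ with $g\in S\setminus N_S(S_1)$ avoids that split entirely: since every nontrivial element of $Z(S_1)$ is an involution of $S$ and hence lies in $Z(S)$, the element $g$ centralizes $Z(S_1)$, so right multiplication by $g$ changes neither the conjugation action on $D$ nor --- because $x_1$ normalizes $S_1$ --- the resulting conjugate $S_1^{y_1}=S_1^{g}$, and Proposition~\ref{normalizer} guarantees such a $g$ exists while Proposition~\ref{S_1cap S_1^x} pins the intersection down to $Z(S_1)$. Both arguments ultimately rest on the same two facts ($S\le C_G(Z(S_1))$ and $N_S(S_1)<S$); yours packages them more economically and in one stroke.
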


\begin{proof} Since $S_1$ is not normal in $G$, $d_c(S_1,G)>2$.
If $S_1\cap S_1^{x_1}\cap S_1^{x_2}=1$ then $y_1$ can be chosen outside $N_G(S)$, for
$S_1<S\in Syl_2(G)$.
If $S_1\cap S_1^{x_1}\cap S_1^{x_2}=S_1$ then $y_1=x_1$ is a good choice.
If $S_1\cap S_1^{x_1}\cap S_1^{x_2}=Z(S_1)$ then if $S_1\cap S_1^{x_1}=Z(S_1)$
then
we choose $y_1=x_1$ if $S_1\cap S_1^{x_1}=S_1$ and $x_1$ is a $2$-element in $N_G(S_1)$, then it centralizes $Z(S_1)$, so  we choose 
$y_1\in S\backslash N_S(S_1)$. This is possible since $S>N_S(S_1)$, by Proposition \ref{normalizer}.
On the other hand, if $x_1$ is an element of odd order in $N_G(S_1)=K_1S_2\leq N_G(S)=KS$, then
we observe that
$N_G(Z(S_1))=\tilde K_1S$, where $|K_1|=|\tilde K_1|=|Z(S_1)|-1$, by the 
sharply $1$-transitivity of the action on the involutions
by $K_1$ and $\tilde K_1$. We may suppose
that $x_1\in K_1$. Let us choose $y_1$
in a complement $\tilde K_1$  of $N_G(Z(S_1))$
such that $\tilde K_1\not\leq N_G(S_1)$, and $y_1$ is conjugate to $x_1$ in $N_G(Z(S_1))$. Such a complement $\tilde K_1 $ exists, since $N_G(Z(S_1))$ has more complements
than $N_G(S_1)$. Then $y_1={x_1}^s$, where
$s\in S$, thus the action of $y_1$ and $x_1$ 
on $Z(S_1)$ is the same. However $S_1\cap {S_1}^{y_1}=Z(S_1)$, since  this
intersection contains $Z(S_1)$ and does not contain $S_1$. Thus by Proposition 
\ref{S_1cap S_1^x} gives us that the intersection is $Z(S_1)$.

Now we show that  $1<{S_1}_{x_1,x_2}=Z<Z(S_1)$ cannot hold.
Let us suppose now that $1\ne S_1\cap S_1^{x_1}\cap S_1^{x_2}=Z<Z(S_1)$.
Then $S_1\cap S_1^{x_1}=Z(S_1)$ must hold  and $Z(S_1)\cap S_1^{x_2}=Z$.
Then $1<Z(S_1)^{x_2^{-1}}\cap S_1\leq S_1^{x_2^{-1}}\cap S_1$ and since it is a proper subgroup of $S_1$,
by  Proposition \ref{S_1cap S_1^x},
$S_1^{x_2^{-1}}\cap S_1=Z(S_1)$.
Thus $S_1\cap S_1^{x_2}=Z(S_1)^{x_2}=Z(S_1)$ which cannot be the case,
since ${S_1}_{\{x_1,x_2\}}$ is 
a proper subgroup of $Z(S_1)$.
Thus the above intersection cannot be $1\ne Z<Z(S_1)$.
It is easy to see that the chosen
elements $y_1$ in each case act the same way on
${S_1}_{\{x_1,x_2\}}$
as $x_1$. Thus
$d_c(S_1,G)=3$.
\end{proof}

\begin{corollary} Let the notations be as in Corollary \ref{dcS_1}.
If  the intersection $G_1\cap {G_1}^x$ contains $S_1$ 
then this intersection cannot contain an  element of odd order,
except for the case when
this intersection is $G_1$.
\end{corollary}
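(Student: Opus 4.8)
The plan is to suppose, for contradiction, that $D:=G_1\cap G_1^x$ contains $S_1$ and a nontrivial element of odd order while $D\neq G_1$, and to deduce that $G_1^x=G_1$, hence $D=G_1$, a contradiction.

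First I would identify $D$ precisely. By Remark~\ref{123}(1), $D$ contains a maximal cyclic subgroup $L$ of $G_1$ of odd order, so $D\supseteq\langle S_1,L\rangle$. Applying Theorem~\ref{Suzuki} to $G_1=Sz(s)$, the only maximal subgroup of $G_1$ whose order is divisible by $|S_1|=s^2$ is $N_{G_1}(S_1)$ (a one-point stabilizer of $G_1$, Frobenius with kernel $S_1$ and cyclic complement of order $s-1$); since $D\neq G_1$, this forces $D\le N_{G_1}(S_1)$. As $L\cap S_1=1$ we get $|L|\mid s-1$, and since the orders of the remaining maximal cyclic odd subgroups of $G_1$ divide $s^2+1$ and $\gcd(s-1,s^2+1)=1$, necessarily $|L|=s-1$; thus $L$ is a Frobenius complement of $N_{G_1}(S_1)$ and $D\supseteq S_1L=N_{G_1}(S_1)$, so $D=N_{G_1}(S_1)$. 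Running the same argument inside $G_1^x$ — in which $S_1$ is again a full Sylow $2$-subgroup and $D$ is a proper subgroup of order $s^2(s-1)$, the order of a one-point stabilizer — shows that $D$ is likewise a one-point stabilizer of $G_1^x$.

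Next I would prove $N_G(D)=D$. Since $S_1=O_2(D)$ is characteristic in $D$, $N_G(D)\le N_G(S_1)$. The involutions of $Z(S_1)$ all lie in the Sylow $2$-subgroup $S$ of $G$ containing $S_1$, hence all fix the unique point fixed by $S$; as any element of $N_G(Z(S_1))$ permutes these involutions it must fix that point, so $N_G(Z(S_1))\le N_G(S)=S\rtimes H$ and therefore $N_G(S_1)\le S\rtimes H$. Using Proposition~\ref{normalizer} (which gives $|N_S(S_1)|=qs$) together with the description of $N_G(Z(S_1))$ in the proof of Corollary~\ref{dcS_1} (its $2'$-part has order $|Z(S_1)|-1=s-1$), one obtains $N_G(S_1)=N_S(S_1)\rtimes R$ with $R$ cyclic of order $s-1$ lying inside a complement of $S$ in $N_G(S)$. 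Such an $R$ acts coprimely and fixed-point-freely on $S$, hence on the $R$-invariant section $N_S(S_1)/S_1$, so $N_G(S_1)/S_1$ is a Frobenius group with complement $D/S_1$; as Frobenius complements are self-normalizing, $N_{N_G(S_1)}(D)=D$, whence $N_G(D)=D$. I expect this determination of $N_G(S_1)$, and with it $N_G(D)=D$, to be the main point of difficulty.

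Finally, let $Y$ be the set of $G$-conjugates of $G_1$ having $D$ as a one-point stabilizer. Any two members of $Y$ are $G$-conjugate, and since all one-point stabilizers of a Suzuki group are conjugate inside it, the conjugating element may be chosen in $N_G(D)$; hence $N_G(D)=D$ acts transitively on $Y$, with stabilizer of $G_1$ equal to $D\cap N_G(G_1)=D\cap G_1=D$ (using $N_G(G_1)=G_1$, Remark~\ref{123}), so $|Y|=1$. Both $G_1$ and $G_1^x$ lie in $Y$, so $G_1^x=G_1$ and $D=G_1$, the desired contradiction. Alternatively, the last two paragraphs can be replaced by a geometric argument: $D$ fixes the point $\omega$ fixed by $S$ in each of the two sub-ovoids $\Omega_1,\Omega_1^x$ on which $G_1,G_1^x$ act, and either $\Omega_1=\Omega_1^x$, forcing $G_1=G_1^x$ by comparing minimal faithful permutation degrees (Theorem~\ref{Suzuki}), or $\Omega_1\cap\Omega_1^x=\{\omega\}$, in which case $\Omega_1\setminus\{\omega\}$ and $\Omega_1^x\setminus\{\omega\}$ would be two distinct regular $S_1$-orbits, a configuration excluded by a short computation with the multiplication of $S$ and the action of $H$ from Proposition~\ref{normalizer}.
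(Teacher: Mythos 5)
Your argument is correct, and while its first half coincides with the paper's, its second half takes a genuinely different route. Both proofs begin by pinning down $D=G_1\cap G_1^x$ as $N_{G_1}(S_1)=S_1K_1$ with $K_1$ cyclic of order $s-1$ (the paper argues that $\langle K_1,S_1\rangle=G_1$ unless $K_1\le N_{G_1}(S_1)$; you read the same fact off the maximal subgroup list of Theorem~\ref{Suzuki} together with the coprimality $\gcd(s-1,s^2+1)=1$ — essentially equivalent). From there the paper normalizes $x$ into $N_G(S_1)$ and runs a two-case, element-level analysis ($x$ a $2$-element versus $x$ of odd order), in each case producing an element that would have to normalize $K_1$ while lying in the kernel, respectively lying outside $G_1^x$, contradicting the Frobenius structure of $N_G(S)$. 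You instead prove the structural statement $N_G(D)=D$, by showing $N_G(S_1)=N_S(S_1)\rtimes R$ with $|R|=s-1$ acting fixed-point-freely on the coprime section $N_S(S_1)/S_1$, so that $D/S_1$ is a self-normalizing Frobenius complement in $N_G(S_1)/S_1$; you then conclude with an orbit argument showing that $D$ is a point stabilizer of exactly one conjugate of $G_1$. Your route costs a more careful determination of $N_G(S_1)$ (which the paper only needs implicitly) but buys a cleaner, case-free conclusion and the reusable facts $N_G(S_1)=N_S(S_1)\rtimes R$ and $N_G(N_{G_1}(S_1))=N_{G_1}(S_1)$. Two small remarks: the transitivity of $N_G(D)$ on your set $Y$ should be phrased via Sylow's theorem in $G_2$ (conjugacy of the subgroups $N_{G_2}(S_2)$, $S_2\in Syl_2(G_2)$) rather than via the permutation action, though these amount to the same thing; and the alternative ``geometric'' ending you sketch is not worked out and should be dropped in favour of the orbit argument, which is complete as it stands.
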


\begin{proof} Let $G_1\cap G_1^x\geq S_1$. Then
$S_1,S_1^{x^{-1}}\leq G_1$. Thus there exists an element $g_1\in G_1$  such that
$S_1^{g_1}=S_1^{x^{-1}}$ and so $S_1^{g_1x}=S_1$ and  $n=g_1x\in N_G(S_1)$. Thus
${G_1}^x={G_1}^n$. So we may assume that  already at the beginning $x\in N_G(S_1)$.
If the intersection $G_1\cap G_1^x$ would contain an 
element of odd order, then  by Remark \ref{123}/1, it would also contain a maximal cyclic subgroup 
of odd order in $G_1$. Let us denote such a maximal cyclic subgroup  by $K_1$.
If $K_1\not\leq N_{G_1}(S_1)$ then $\langle K_1,S_1 \rangle=G_1$,
since no proper  subgroup of $G_1$
contains both $K_1$ and $S_1$.  Thus if the intersection is a proper subgroup of $G_1$
then $K_1\leq N_{G_1}(S_1)$ and $N_{G_1}(S_1)=K_1S_1$.
So we may assume that $G_1\cap G_1^x=K_1S_1$.
We know that $x\in N_G(S_1)$. If $x$ is a $2$-element, then since 
$K_1^{x^{-1}}\leq N_G(S_1)$
and $K_1^{x^{-1}}\leq G_1$, thus $K_1^{x^{-1}}\leq N_{G_1}(S_1)=K_1S_1$. Hence we have that 
${K_1}^{x^{-1}}={K_1}^{s_1}$ for some $s_1\in S_1$.
Thus $K_1=K_1^{s_1x}$. Since $x\in N_{G_1}(S_1)\leq N_G(S)$,
both $s_1$ and $x$ belong to $S$, so their product is
an element of $S$ normalizing $K_1$, which is impossible, since $N_G(S)$ is a Frobenius group.
Let now $x$ be of odd order in $N_G(S_1)$. We have that
$K_1^x\leq N_G(S_1)$ and $K_1^x\leq G_1^x$.
Since $K_1,K_1^x\leq N_{G_1^x}(S_1)$, they are conjugate, so there is an element
$y\in N_{G_1^x}(S_1)$ such that  $K_1=K_1^{xy}$. Thus $xy\in N_G(K_1)$. 
However $xy\in N_G(S_1)\leq N_G(S)$,
so it belongs to a complement $K_2$ of $N_G(S_1)$
containing $K_1$. Since $K_1$ acts sharply $1$-transitively on the involutions of $S_1$, $K_2$ cannot be bigger than $K_1$, otherwise some involution would be centralized
by a $2'$-element, which is not possible in a Suzuki group.
Hence $xy\in K_1$. But $x\not\in G_1^x, y \in G_1^x$ thus
$xy\not\in G_1^x$, contradicting the fact that $K_1\leq G_1^x$.
Thus $G_1\cap G_1^x$ is a $2$-group.
\end{proof}

\begin{proposition}\label{G_1capG_1^x} We use the notations of Corollary \ref{dcS_1}. 
If $G_1\cap G_1^x $ 
contains an involution $i$, then this intersection is either
$Z(S_1)$ or $S_1$ or $G_1$, where $i\in S_1\in Syl_2(G_1)$.
\end{proposition}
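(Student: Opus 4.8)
The plan is to set $D:=G_1\cap G_1^x$ and to split into two cases according to whether or not $D$ is a $2$-group; in the non-$2$-group case I would further locate $D$ among the subgroups of $G_1\cong Sz(s)$ listed in Theorem \ref{Suzuki}. A preliminary observation carries much of the weight: \emph{if $D$ contains an element $\tau$ of order $4$, then $D$ contains a Sylow $2$-subgroup of $G_1$.} Indeed $\tau$ lies in a unique Sylow $2$-subgroup $P$ of $G_1$ and in a unique Sylow $2$-subgroup $P''$ of $G_1^x$ (uniqueness since these are $TI$ in $G$), both $G$-conjugates of $S_1$; as $1\neq\tau\in P\cap P''$, Proposition \ref{S_1cap S_1^x} forces $P=P''$ — the alternative $P\cap P''=Z(P)$ is impossible because $Z(P)$ is elementary abelian whereas $o(\tau)=4$ — so $P=P''\leq G_1\cap G_1^x=D$. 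Together with the Corollary immediately preceding this Proposition this gives: \emph{if $D$ has an element of order $4$ and also a nontrivial element of odd order, then $D=G_1$.}

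If $D$ is a $2$-group, let $S_1$ and $S_1''$ be the unique Sylow $2$-subgroups of $G_1$ and $G_1^x$ containing $i$. Then $D\leq S_1\cap S_1''$, while $S_1\leq G_1$ and $S_1''\leq G_1^x$ give $S_1\cap S_1''\leq D$, so $D=S_1\cap S_1''$; by Proposition \ref{S_1cap S_1^x} applied to $S_1$ and its $G$-conjugate $S_1''$, and $1\neq i\in D$, we conclude $D=S_1$ or $D=Z(S_1)$.

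Suppose now $D$ is not a $2$-group; I would show $D=G_1$. By part 1 of Remark \ref{123}, $D$ contains a maximal cyclic subgroup $L$ of $G_1$ of odd order, and by Theorem \ref{Suzuki} either $|L|=s-1$ or $L$ is a conjugate of $A_1^{(1)}$ or $A_2^{(1)}$; by the observation I may assume $D$ has no element of order $4$. Write $M:=N_{G_1}(L)$ — dihedral of order $2(s-1)$, resp.\ a Frobenius group with cyclic Sylow $2$-subgroup of order $4$ (Theorem \ref{info}) — and examine $\langle L,i\rangle\leq D$. If $i\in M$, then any involution of $M$ inverts $L$, so $\langle L,i\rangle$ is dihedral of order $2|L|$: when $|L|=s-1$ this is the maximal subgroup $B_0^{(1)}$, so $D\in\{B_0^{(1)},G_1\}$ and the dihedral possibility is excluded by part 3 of Remark \ref{123}; when $L$ is an $A_j^{(1)}$-conjugate it has index $2$ in $B_j^{(1)}$, so $D\in\{\text{it},\,B_j^{(1)},\,G_1\}$, where the first is excluded by part 3 of Remark \ref{123} and $B_j^{(1)}$ by the observation (it has an element of order $4$ but is too small to contain a Sylow $2$-subgroup of $G_1$); so $D=G_1$. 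If $i\notin M$, then $\langle L,i\rangle$ contains $L$ but is not contained in $N_{G_1}(L)$; inspecting the subgroups of $G_1$ containing $L$ (Theorem \ref{Suzuki}) forces $\langle L,i\rangle=G_1$ when $L$ is an $A_j^{(1)}$-conjugate, and when $|L|=s-1$ it forces either $\langle L,i\rangle=G_1$, or — after conjugating $D$ by an element of $G_1$ so that $L=H$ lies in a Sylow-normalizer $F_1H$ — that $i$ is an involution of the Frobenius group $F_1H$, hence lies in its kernel $F_1$, whose involutions form $Z(F_1)$, so $i\in Z(F_1)$; then, $H$ being transitive on the involutions of $F_1$ and acting irreducibly on $Z(F_1)$ and on $F_1/Z(F_1)$, $\langle L,i\rangle\in\{Z(F_1)H,\,F_1H\}$, and $F_1H$ again yields $D=G_1$ by the observation. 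So everything comes down to showing $D=Z(F_1)H$ is impossible.

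Excluding $D=Z(F_1)H$ is, I expect, the main obstacle. Assume $D=Z(F_1)H$. Then $Z(F_1)\leq G_1^x$, and, being an elementary abelian subgroup of $G_1^x\cong Sz(s)$ of order $s$, it is the centre of a unique Sylow $2$-subgroup $F_1'$ of $G_1^x$; since $H$ normalises $Z(F_1)=Z(F_1')$ it normalises $F_1'$. Hence $H$ normalises two Suzuki $2$-subgroups $F_1\neq F_1'$ of order $s^2$ with the same centre $Z(F_1)$ (distinct because otherwise $D$ would be a $2$-group), both lying in $S:=C_G(i)\in Syl_2(G)$, with $F_1\cap F_1'=Z(F_1)$. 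To reach a contradiction I would compute inside $S$ as in Proposition \ref{normalizer}: normalising so that $S=\{u(a,b):a,b\in GF(q)\}$, $F_1=\{u(a,b):a,b\in GF(s)\}$, $H=\{M(\lambda):\lambda\in GF(s)^{\times}\}$, one checks that every Suzuki $2$-subgroup of $S$ of order $s^2$ with centre $Z(F_1)$ is $G$-conjugate to $F_1$ via an element of $N_G(S)=S\rtimes H^G$ and, after absorbing the torus part (which fixes $Z(F_1)$), has the form $F_1^{u(c,d)}$ (here Proposition \ref{S_1cap S_1^x} is used to see such a subgroup meets $F_1$ in $F_1$ or in $Z(F_1)$); and that $F_1^{u(c,d)}$ is $H$-invariant only when $c\in GF(s)$, i.e.\ only when it equals $F_1$ — the key point being that for $c\notin GF(s)$ and $\lambda\neq 1$ the element $u((\lambda^{-1}+1)c,\,\ast)$ arising when $M(\lambda)$ conjugates $F_1^{u(c,d)}$ has first coordinate outside $GF(s)$, hence lies outside $N_S(F_1)=\{u(c',d'):c'\in GF(s)\}$. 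This forces $F_1'=F_1$, a contradiction, so $D\neq Z(F_1)H$; combined with the previous cases, the Proposition follows.
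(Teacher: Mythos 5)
Your proof is correct, but it reaches the conclusion by a genuinely different route from the paper's. The paper first shows directly that $Z(S_1)\leq G_1\cap G_1^x$, by analysing how the Sylow $2$-subgroups of $G_1$ and of $G_1^x$ through $i$ are conjugate inside $N_G(S)$ and using sharp $1$-transitivity of the Frobenius complements on involutions; it then replaces $x$ by an element of $N_G(Z(S_1))$ and kills any odd-order element of the intersection abstractly: a maximal odd-order cyclic subgroup $H_1$ of the intersection and $H_1^x$ would be conjugate in the Frobenius group $N_{G_1^x}(Z(S_1))$, producing an element $xy\in N_G(H_1)\cap N_G(Z(S_1))$ with $x\notin G_1^x$ and $y\in G_1^x$, contradicting $H_1\leq G_1^x$. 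You instead split on whether the intersection is a $2$-group (where Proposition \ref{S_1cap S_1^x} finishes at once) and, in the mixed case, enumerate the overgroups of $\langle L,i\rangle$ in the subgroup lattice of $Sz(s)$; together with Remark \ref{123} and the corollary on intersections containing a Sylow $2$-subgroup, this isolates a single problematic configuration, $D=Z(F_1)H$, which you exclude by an explicit computation in $S$. Both arguments ultimately rest on the same phenomenon --- the torus normalising $Z(F_1)$ is too small to move a Sylow $2$-subgroup of $G_1$ while fixing its centre --- but you localise it as a matrix identity (namely that $(\lambda^{\pm 1}+1)c\in GF(s)$ for all $\lambda\in GF(s)^{\times}$ forces $c\in GF(s)$, with $N_S(F_1)$ identified by Proposition \ref{normalizer}), whereas the paper extracts it from sharp $1$-transitivity. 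Your route costs a longer case analysis, leans more heavily on the full subgroup classification of $Sz(s)$ (e.g.\ that the dihedral group of order $2(s-1)$ and the point stabilisers $F_jL$ are the only proper overgroups of the relevant $\langle L,i\rangle$), and leaves the decisive computation as a sketch; but the sketch is sound --- $H$-invariance of $F_1^{u(c,d)}$ is equivalent to $u(c,d)^{M(\lambda)}u(c,d)^{-1}\in N_S(F_1)$ for all $\lambda$, and the reduction of the conjugating element to one of the form $u(c,d)$ works because the $K$-part of any element of $N_G(S)$ fixing $Z(F_1)$ setwise already lies in $H$. What your approach buys is an explicit picture of where the critical obstruction lives, at the price of more bookkeeping.
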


\begin{proof} 
Let us suppose that the 
intersection $G_1\cap G_1^x$  contains the involution $i$ and this
intersection is a proper subgroup of $G_1$.
Let $S_2\in Syl_2(G_1^x)$ containing $i$.
Then $S_1,S_2$ are contained in the same Sylow $2$-subgroup $S$ of $G$ and
they are conjugate by an element  $k\in N_G(S)$.
If $k\in S$, then $k$ centralizes $Z(S_1)$, and hence $Z(S_1)=Z(S_1)^k=Z(S_2)$ and 
thus $Z(S_1)\leq G_1\cap G_1^x$.
If $k$ is of odd order, then let us take an involution $j\in S_1$  such that $j^k=i$.
The subgroup $N_G(Z(S_1))$ is contained in $N_G(S)=SK$. We may suppose that $k$ belongs to the complement $K$.  
By the Dedekind identity $N_G(Z(S_1))=S(K\cap N_G(Z(S_1)):=SK_2$.
However, $N_{G_1}(S_1)=S_1K_1\leq N_G(S_1)\leq  N_G(Z(S_1))$, thus there exists an element $s\in S$ such that ${K_1}^s\leq K_2$. 
We note that $K_1$ acts sharply $1$-transitively on the involutions of $Z(S_1)$, and $K_1^s$
is acting the same way, since elements of $S$ centralize $Z(S_1)$.
We  will show that $K_2=K_1^s$. An element in
$K_2\backslash K_1^s$ would take an involution to the same
place as an element in $K_1^s$ and their quotient would centralize that involution, 
which cannot happen in a Frobenius  group. Thus $N_G(Z(S_1))=SK_1^s$
and there is an element $h\in K_1^s$ such that
$j^h=i$. But then $hk^{-1}$ is in $K$ and centralizes $i$, which can
only happen if $h=k$, and hence $k\in N_G(Z(S_1))$.
Thus, $Z(S_1)=Z(S_1)^k\leq S_1^k=S_2$
and hence $G_1\cap {G_1}^x\geq Z(S_1)$.

We know that $Z(S_1),Z(S_1)^{x^{-1}}\leq G_1$.
So there is an element $g_1\in G_1$ such that $Z(S_1)^{g_1}=Z(S_1)^{x^{-1}}$. 
Thus, $Z(S_1)^{g_1x}=Z(S_1)$ and $g_1x\in N_{G_1}(Z(S_1))$. So we may assume that
already originally $x$  belongs to $N_G(Z(S_1))$. 
If $G_1\cap G_1^x$ would contain an element of odd order then by Remark \ref{123}/1,
it would also contain
a maximal cyclic subgroup
$H_1\leq G_1$ of odd order. Because
of the subgroup structure of $G_1$, 
if $x$ does not normalize $G_1$ then $H_1$ normalizes $Z(S_1)$.
Similarly, $H_1,H_1^x\leq N_{G_1^x}(Z(S_1))\leq N_G(S)$.
Hence $H_1,H_1^x$ are conjugate in the Frobenius group $N_{G_1^x}(Z(S_1))$, i.e.
there is an element $y\in N_{G_1^x}(Z(S_1))$ such that $H_1^{xy}=H_1$. Thus
$xy\in N_G(H_1)$. However, $xy\in N_G(Z(S_1))$.
If $xy$ is an involution then it belongs to $S$, and then it cannot normalize $H_1$.
If $xy$ is of odd order then  it belongs to $H_1$. However, $x\not\in G_1^x$ and
$y\in G_1^x$, thus $xy\not\in G_1^x$. This contradicts  the fact that $H_1\leq G_1^x$.

Thus if $G_1\cap G_1^x$ contains an involution then it is either $Z(S_1)$ or $S_1$, since
it cannot contain an element of odd order.
The intersection can be $Z(S_1)$  e.g. when $S_1\cap S_1^x=Z(S_1)$. 
It happens e.g. when $x\in S\in Syl_2(G)$ containing $S_1$, but not normalizing
$S_1$. Such an element exists, since  $S>N_S(S_1)$ holds by Proposition \ref{normalizer}. If for some
$S_2\in Syl_2(G_1)$ the equality $S_2^x=S_1$ would hold, then $S_2\leq S$ and since $G_1\cap S=S_1$,
$S_1=S_2$, which is not the case.  

The intersection is $S_1$
if $x$ normalizes $S_1$ but it does not normalize $G_1$.
We can get such an element from $Z(S)$ since it cannot normalize $G_1$.
To see, this, let us use \cite[3.12 Remarks c) in Ch.~XI]{H-Bl_III}, telling
that 
the outer automorphism group of $G_1$ is cyclic of odd order. We have that no element of $Z(S)$
centralizes $G_1$, since the centralizers of $2$-elements are $2$-groups.
If $Z(S)\leq N_G(G_1)$
then then  $Z(S)/Z(S_1)\leq Out(G_1)$, which is a contradiction. 
\end{proof}
%The intersection  $G_1^x\cap G_1$ can also be $1$. This we will see later, in
%Proposition 26.

\begin{proposition}\label{occur} We use notations of Cor. \ref{dcS_1}. The intersection $G_1\cap G_1^x$ can be up to conjugacy
either $1$, $Z(S_1)$, $S_1$, $K_1$, ${A_{1_1}}$, ${A_{2_1}}$ or $G_1$, where
$K_1$ is maximal cyclic of order $s-1$, $A_{1_1},A_{2_1}$ are maximal cyclic of orders 
$s+2r_1+1$ and $s-2r_1+1$, respectively, where $s=2^{2p+1}$, $r_1=2^p$ and $q=s^t$.
Moreover, these subgroups occur as intersections.
\end{proposition}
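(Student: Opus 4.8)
The plan is to pin down the $G_1$-conjugacy class of $I:=G_1\cap G_1^x$ by splitting into the cases $I=1$, $I$ of even order, and $I$ of odd order, leaning heavily on what has already been proved. If $I$ has even order it contains an involution, so Proposition~\ref{G_1capG_1^x} gives at once that $I$ is conjugate (in $G_1$) to $Z(S_1)$, to $S_1$, or to $G_1$; and $I=G_1$ occurs exactly when $x\in N_G(G_1)=G_1$ (Remark~\ref{123}). So the real work is the case $1\ne I$ of odd order.

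In that case $I$ contains a nontrivial cyclic subgroup of odd order, so by Remark~\ref{123}(1) it contains a maximal cyclic subgroup $L$ of $G_1$ of odd order. Since the element orders of $G_1=Sz(s)$ are $1,2,4$ and the divisors of $s-1$, of $s+2r_1+1$ and of $s-2r_1+1$ (Theorems~\ref{Suzuki} and~\ref{info}), up to conjugacy in $G_1$ the subgroup $L$ is one of $K_1$, $A_{1_1}$, $A_{2_1}$. I claim $I=L$. As $|I|$ is odd we have $I\ne G_1$, so $I$ lies in a maximal subgroup $M$ of $G_1$, and $L\le M$. Now $|A_{1_1}|=s+2r_1+1$ and $|A_{2_1}|=s-2r_1+1$ both divide $s^2+1$, hence are prime to $|N_{G_1}(F_1)|=s^2(s-1)$, to $|N_{G_1}(K_1)|=2(s-1)$ and to the order of the other $B_{j_1}$; and $|K_1|=s-1$ is prime to $4(s\pm 2r_1+1)$. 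Moreover a proper Suzuki subgroup $Sz(s')<G_1$ has all its cyclic subgroups of order at most $s'+2r'+1<s-2r_1+1\le|L|$, so $M$ is not a conjugate of such an $Sz(s')$. Hence $M$ is a conjugate of $N_{G_1}(F_1)=F_1K_1$ or of $N_{G_1}(K_1)$ (either only if $L=K_1$) or of $N_{G_1}(A_{j_1})=B_{j_1}$. In each case $M$ is dihedral or Frobenius with a $2$-group as kernel or complement, so the odd-order subgroup $I$ lies in the cyclic part: in the cyclic subgroup of order $s-1$ of the dihedral $N_{G_1}(K_1)$; in the Frobenius kernel $A_{j_1}$ of $B_{j_1}$; or, for $F_1K_1$, $I$ meets the kernel $F_1$ trivially and so embeds into the cyclic complement $K_1$. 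Thus $I$ is cyclic, and as $L\le I$ with $L$ maximal cyclic, $I=L$. This shows that up to conjugacy $I\in\{1,Z(S_1),S_1,K_1,A_{1_1},A_{2_1},G_1\}$.

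It remains to exhibit each value. $G_1$ arises from $x=1$. For $S_1$ pick $x\in Z(S)\setminus G_1$, where $S_1<S\in Syl_2(G)$; such $x$ exists because $Z(S)\cap G_1\le Z(S_1)$ has order $s<q=|Z(S)|$, and then $S_1=S_1^x\le I\subsetneq G_1$ is a proper even-order intersection, whence $I=S_1$. For $Z(S_1)$ pick $x\in S\setminus N_S(S_1)$, which is nonempty by Proposition~\ref{normalizer}; then $Z(S_1)=Z(S_1)^x\le I\subsetneq G_1$, while $S_1\le I$ would force $x\in N_S(S_1)$, so $I=Z(S_1)$. (These two are in essence the examples already given at the end of the proof of Proposition~\ref{G_1capG_1^x}.) For $I=1$ one counts as in the proof of Proposition~\ref{ngai}: the number of $x\in G$ with $G_1\cap G_1^x\ne1$ is at most $\sum_{1\ne g\in G_1}|C_G(g)|\,|g^G\cap G_1|=|G|\sum_{\mathcal C\ne\{1\}}|\mathcal C\cap G_1|^2/|\mathcal C|$, and a routine estimate (using that centralizers in $G$ of nontrivial elements have order $\le q^2$ and that $|G_1|$ is small compared with $|G|$ since $q=s^t$ with $t>1$) shows this is $<|G|$. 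Finally, for $L\in\{K_1,A_{1_1},A_{2_1}\}$: by the partition of $G$ in Theorem~\ref{info}(b) the cyclic odd-order subgroup $L$ lies in a unique maximal torus $T$ of $G$ (a conjugate of $H$ if $L=K_1$, since $s-1\mid q-1$ and $\gcd(s-1,q^2+1)=1$; a conjugate of $A_1$ or $A_2$ otherwise). As $L$ is characteristic in the cyclic group $T$ we get $N_G(T)\le N_G(L)$, and $|N_G(T)|\ge 2|T|>4|L|\ge|N_{G_1}(L)|\ge|N_G(T)\cap G_1|$, so $N_G(T)\not\subseteq G_1$. Picking $x\in N_G(T)\setminus G_1$ gives $L\le I\subsetneq G_1$; hence $I$ has odd order (a proper even-order intersection is a $2$-group by Proposition~\ref{G_1capG_1^x} and cannot contain $L$), and then the classification above, with $|K_1|,|A_{1_1}|,|A_{2_1}|$ pairwise coprime, forces $I=L$.

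The main obstacle is the odd-order case: showing that $I$ cannot sit strictly between a maximal torus $L$ of $G_1$ and $G_1$ itself. This forces one to combine Remark~\ref{123}(1) with the complete list of maximal subgroups of $G_1$ in Theorem~\ref{Suzuki}, and it rests on the numerical fact that every proper Suzuki subgroup $Sz(s')<Sz(s)$ has all cyclic subgroups of order $<s-2r_1+1$ --- equivalently $s-2r_1>s'+2r'$ for all admissible $s'<s$ --- so that none of $K_1,A_{1_1},A_{2_1}$ can be housed in such an $Sz(s')$.
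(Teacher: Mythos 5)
Your argument is correct and, for the classification half, takes essentially the same route as the paper: even-order intersections are handled by Proposition~\ref{G_1capG_1^x}, and a nontrivial odd-order intersection contains a maximal odd-order cyclic subgroup $L$ of $G_1$ by part~1 of Remark~\ref{123}. You are in fact more explicit than the paper at the key point: the paper dismisses the equality $I=L$ with ``because of the subgroup structure'', whereas you run $I$ through the maximal subgroups of $G_1$ using the pairwise coprimality of $s-1$, $s+2r_1+1$, $s-2r_1+1$ and the numerical fact that a proper $Sz(s')<Sz(s)$ (so $s\geq (s')^3$) has no cyclic subgroup of order $\geq s-2r_1+1$; a slightly shorter variant is to note that every odd-order subgroup of $Sz(s)$ is cyclic, so $I$ cyclic containing the maximal cyclic $L$ forces $I=L$. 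Your constructions of the occurrences $K_1,A_{1_1},A_{2_1}$ via $x\in N_G(T)\setminus G_1$ for the ambient torus $T$ of $G$ are the same in spirit as the paper's choice of $x$ in the large cyclic subgroup of $G$ containing $L$ but outside $N_G(G_1)$. The one place where you stop short of a complete proof is the occurrence of the trivial intersection: the paper defers this to Propositions~\ref{Z(S_1)} and~\ref{one}, which count explicitly, for each nontrivial intersection type, the elements $x$ realizing it and then verify the resulting inequality against $|G|$; you instead use the class-sum bound $|G|\sum_{{\mathcal C}\neq\{1\}}|{\mathcal C}\cap G_1|^2/|{\mathcal C}|$ and assert that a ``routine estimate'' makes it less than $|G|$. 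That bound is valid and the estimate does go through (the dominant contribution, from the classes meeting the odd-order tori of $G_1$, is at most of order $|G_1|^2/\min|{\mathcal C}|\leq 16s^{10}/q^4\leq 16/s^2$ since $q=s^t$ with $t\geq 3$), but as written it is an unverified claim; to make the proof self-contained you should either carry out the class-by-class arithmetic or, as the paper does, cite the separate counting proposition.
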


\begin{proof} If $G_1\cap G_1^x\ne G_1$ contains a maximal cyclic subgroup of $G_1$ of odd order then it cannot contain other maximal cyclic
subgroups, because of the subgroup structure.
In fact one can get such an intersection if we take a cyclic subgroup $K_1\leq G_1$
of  order $s-1$. Then it is contained in a maximal cyclic subgroup $K$ of order $q-1$
in $G$. Then $K\not\leq N_G(G_1)$, since otherwise  $G_1$ would contain all the involutions
of $S\in Syl_2(G)$ normalized by $K$, which is not the case. 
Let $x\in K\backslash N_G(G_1)$. Then $G_1\cap G_1^x=K_1$.
The subgroup $A_{1_1}$ (and similarly $A_{2_1}$) is contained in a  maximal cyclic subgroup of $G$ of
odd order. It is definitely bigger than $|A_{1_1}|$. Let us take an element
$x$ of such  a maximal cyclic subgroup of odd order in $G$ outside $G_1$, then
$G_1\cap G_1^x$ contains $A_{1_1}$. It cannot normalize $G_1$, since then together with $G_1$ it would generate a group, which cannot be a subgroup of $G$ by Theorem \ref{Suzuki}. 
Thus the intersection 
is equal  to $A_{1_1}$. Similarly one can contstruct the intersection to be $A_{2_1}$.
The fact that $G_1\cap {G_1}^x=1$ occurs will be shown later in Proposition \ref{one}. 
The
rest follows from  Proposition \ref{G_1capG_1^x}.
\end{proof}

\begin{corollary}\label{dcG_1} With the notations of Proposition \ref{occur}, the intersection $G_1\cap G_1^{x_1}\cap G_1^{x_2}$ can
be $1$, $Z(S_1)$, $S_1$, $K_1$, $A_{1_1}$, $A_{2_1}$, $G_1$ or some conjugate subgroups
of $G_1$. The combinatorial depth  $d_c(G_1,G)$ is $4$. 
\end{corollary}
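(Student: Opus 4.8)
The plan is to establish the list of possible triple intersections — which simultaneously yields $d_c(G_1,G)\le 4$ — and then separately to rule out $d_c(G_1,G)=3$. For the first part, write $W_k:=G_1\cap G_1^{x_k}$; by Proposition \ref{occur} each $W_k$ is, up to conjugacy in $G_1$, one of $1,\ Z(S_1),\ S_1,\ K_1,\ A_{1_1},\ A_{2_1},\ G_1$. Among these subgroups and their $G_1$-conjugates the only nontrivial inclusion is $Z(S_1)\le S_1$: distinct Sylow $2$-subgroups of $G_1$ are $TI$ in $G_1$ by Theorem \ref{info}(b) and hence meet trivially; since $Z(S_1)\le Z(S)$ for the Sylow $2$-subgroup $S\ge S_1$ of $G$, the centres $Z(S_1^a)$ of distinct Sylow $2$-subgroups of $G_1$ also meet trivially; the odd-order members have pairwise coprime orders; and an odd-order member meets a $2$-group trivially. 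Consequently $W_1\cap W_2$ is $1$, or equal to $W_1$, or equal to $W_2$; this proves the asserted list for $G_1\cap G_1^{x_1}\cap G_1^{x_2}$, and by Theorem \ref{BDK3.9}(i) it gives $\mathcal U_1=\mathcal U_2$, hence $d_c(G_1,G)\le 4$ — indeed if the triple intersection is $1$ we realise it as a double intersection by Proposition \ref{one}, and otherwise it already equals $W_1$ or $W_2$.

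It remains to show $d_c(G_1,G)>3$, which, since $G_1$ is not normal in $G$, is the same as excluding $d_c(G_1,G)=3$. By Theorem \ref{BDK3.9}(ii) with $i=2$ we must produce $x_1,x_2\in G$ such that no $y_1\in G$ satisfies simultaneously $G_1\cap G_1^{y_1}=G_1\cap G_1^{x_1}\cap G_1^{x_2}$ and $x_1hx_1^{-1}=y_1hy_1^{-1}$ for every $h$ in that intersection. Following the pattern of Proposition \ref{ngai} and its analogue for $B_0$, the natural candidates make the triple intersection a proper nontrivial subgroup $V\le G_1$ with $G_1\cap G_1^{x_1}\supsetneq V$ (so that $y_1=x_1$ is excluded) and with $x_1$ inducing a nontrivial conjugation on $V$; by the classification above the only such possibility is $V=Z(S_1)$ with $G_1\cap G_1^{x_1}=S_1$ and $G_1\cap G_1^{x_2}=Z(S_1)$ (see Propositions \ref{S_1cap S_1^x} and \ref{G_1capG_1^x} for the existence of such $x_1,x_2$), or the corresponding situation with $V$ one of $S_1,K_1,A_{1_1},A_{2_1}$ and $x_1\in G_1$. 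One then argues, as in Proposition \ref{ngai}, that a hypothetical $y_1$ realising $V$ as a double intersection while reproducing the action of $x_1$ on $V$ is forced to carry a conjugate of one of the $TI$ subgroups $S_1,Z(S_1),K_1,A_{1_1},A_{2_1}$ of $G_1$ into itself while lying outside its normaliser in $G$, contradicting Theorem \ref{info}(b) — exactly as the $B_1$-argument exploited the $TI$-property of $A_1$. Combined with the upper bound this gives $d_c(G_1,G)=4$; note that $S_1$ alone has depth $3$ in $G$ by Corollary \ref{dcS_1}, so the passage to depth $4$ has to be detected by the odd-order cyclic subgroups of $G_1$.

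I expect the last step — verifying that no admissible $y_1$ exists for the chosen pair — to be the main obstacle. The delicate point is that there is considerable freedom in choosing $y_1$: one may multiply $y_1$ on the right by any element of $C_G(V)$, which for $V=Z(S_1)$ is the whole Sylow $2$-subgroup $S\ge S_1$ of $G$, and when $x_1\in G_1$ one may absorb the factor $x_1$ into $G_1$; so $(x_1,x_2)$ must be selected so that these "repairs" are unavailable, i.e.\ so that the prescribed conjugation action on $V$ is genuinely incompatible with $G_1\cap G_1^{y_1}$ being exactly $V$. Making this rigorous rests on a careful analysis of how $N_G(Z(S_1))=S\rtimes\tilde K_1$ and $N_G(S_1)$ sit relative to $G_1$, and of the constraints this imposes linking the value $G_1\cap G_1^{y_1}$ to the conjugation that $y_1$ induces on $V$; this is the technical heart of the proof.
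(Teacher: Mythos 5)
Your first half --- the classification of the triple intersections and the bound $d_c(G_1,G)\le 4$ --- is correct and is essentially the paper's argument, though you spell out more carefully why two members of the list (or their $G_1$-conjugates) meet in $1$ or in one of the two, which is a genuine improvement in rigour over the paper's one-line justification.

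The second half, however, contains a genuine gap: you never actually prove $d_c(G_1,G)>3$. You correctly identify the strategy (exhibit a pair $x_1,x_2$ for which no admissible $y_1$ exists, via Theorem \ref{BDK3.9}(ii)) and you even list the configuration the paper uses, but you explicitly defer the verification as ``the main obstacle'' and ``the technical heart of the proof,'' offering only an analogy with Proposition \ref{ngai}. The paper closes this in two lines, and not by a $TI$/normaliser analysis: take the triple intersection to be $A_{1_1}$ with $x_1\in G_1\setminus N_{G_1}(A_{1_1})$ and $G_1\cap G_1^{x_2}=A_{1_1}$ (possible by Proposition \ref{occur}). An admissible $y_1$ would satisfy $A_{1_1}^{y_1}=A_{1_1}^{x_1}$; since $x_1\in G_1$ this conjugate lies in $G_1$, and since $A_{1_1}\le G_1$ we also have $A_{1_1}^{y_1}\le G_1^{y_1}$, whence $A_{1_1}^{y_1}\le G_1\cap G_1^{y_1}=A_{1_1}$ and, by comparing orders, $A_{1_1}^{x_1}=A_{1_1}$, contradicting the choice of $x_1$. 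Note that this argument only uses the subgroup equality $A_{1_1}^{y_1}=A_{1_1}^{x_1}$, so the freedom you worry about (replacing $y_1$ by $y_1c$ with $c\in C_G(A_{1_1})$) is irrelevant \emph{provided} the compatibility condition is read as $h^{x_1}=h^{y_1}$, which is how the paper applies it throughout; with the opposite reading $x_1hx_1^{-1}=y_1hy_1^{-1}$ your concern is real (an element $c\in C_G(A_{1_1})\setminus G_1$ would then repair $y_1=x_1c^{-1}$), so pinning down the direction of conjugation in Theorem \ref{BDK3.9}(ii) is not a pedantic point but exactly what makes the argument close. Also, your alternative candidate configuration $V=Z(S_1)$ with $G_1\cap G_1^{x_1}=S_1$ does not work: the witnesses $x_1$ for $G_1\cap G_1^{x_1}=S_1$ constructed in Proposition \ref{G_1capG_1^x} lie in $Z(S)$ and hence centralise $Z(S_1)$, and then $y_1\in S\setminus N_S(S_1)$ is admissible; so, as you yourself suspect, only the odd-order cyclic intersections detect depth $4$.
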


\begin{proof}
These subgroups occur as ${G_1}_{\{x_1,x_2\}}$, since by Proposition \ref{occur} they occur as $G_1\cap {G_1}^{x_1}$.
We have to show that there are no more possible triple intersections.
However, these would be some subgroups of groups of the above types. On the other 
hand,
$G_1\cap G_1^{x_1}$ is one of the above types, hence  $G_1\cap {G_1}^{x_2}$
is either disjoint to it, or  also one of these, hence their intersection is either
$1$ or one from the above list.

The combinatorial depth of $G_1$ cannot be $2$ since it is not normal in $G$.
We want to show that its combinatorial depth is 
at  most $4$. To see this, we observe that each subgroup which can be 
${G_1}_{\{x_1,x_2\}}$ is already $G_1\cap {G_1}^y$ for some suitable $y\in G$. 
This comes from  
Proposition \ref{occur}  and the first part of this Corollary.

We now show that $d_c(G_1,G)\geq 4$.
Let $G_1\cap G_1^{x_1}\cap G_1^{x_2}=A_{1_1}$.
Let us suppose that $A_{1_1}\ne A_{1_1}^{x_1}\leq G_1$. This happens e.g. when $x_1\in
G_1\backslash N_{G_1}(A_{1_1})$ and $G_1\cap {G_1}^{x_2}=A_{1_1}$.
If the combinatorial depth of $G_1$ in $G$ would be $3$,
then there would exist an element $y_1\in G$ such that 
$G_1\cap G_1^{y_1}=A_{1_1}$ and $A_{1_1}^{x_1}=A_{1_1}^{y_1}$.  However,
$A_{1_1}^{y_1}\leq G_1$,
so $A_{1_1}^{y_1}\leq G_1\cap G_1^{y_1}=A_{1_1}$, which is a contradiction.
Thus $d_c(G_1,G)$ is at least $4$.
\end{proof}

\begin{remark} We observe that $N_G(G_1)=G_1$, since in 
a Suzuki group there is no 
larger subgroup where a smaller Suzuki group would be a normal subgroup.
\end{remark}

\begin{proposition}\label{Z(S_1)} Let $G_1, S_1$ be as in Cor.~\ref{dcS_1}.
\begin{itemize}
\item [a)]
The set $\{ x\in G | G_1>G_1\cap G_1^x\geq Z(S_1)^u
,$ $u\in G \}$ 
has size  at most $(s-1)(q^2-s^2)(s^2+1)^2$
\item [b)]
With the notations of Prop. \ref{occur}, the size of the set
$\{x\in G | G_1\cap G_1^x=K_1^u,$ $ u \in G\}$ is
at most ${{(q-s)s^4(s^2+1)^2}\over 2}$.
\item [c)]
With the  notations of Prop. \ref{occur}, the sets 
$\{x\in G| G_1\cap G_1^x=A_{1_1}^u,$ $u \in G\}$ 
and $\{x\in G|G_1\cap G_1^x=A_{2_1}^u,$ $u \in G\}$ have sizes
${{s^4(s-1)^2}\over 4}(s-2r_1+1)^2(q-s+2(r-r_1))$ and
${{s^4(s-1)^2)}\over 4}(s+2r_1+1)^2(q-s-2(r-r_1))$, respectively.
\end{itemize}
\end{proposition}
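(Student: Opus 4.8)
The plan is to establish all three parts by one uniform counting argument, of the same flavour as the counts in Proposition~\ref{ngai} and its companions. Fix $L$ to be one of $Z(S_1),K_1,A_{1_1},A_{2_1}$, and for a subgroup $L'\le G_1$ that is $G$-conjugate to $L$ write $\mathcal X(L'):=\{x\in G\setminus G_1\ :\ L'\le G_1^x\}$. The first step is to recognise the set described in each part as the \emph{disjoint} union $\bigcup_{L'}\mathcal X(L')$, the union running over the $G$-conjugates $L'$ of $L$ contained in $G_1$. Here I would use Proposition~\ref{occur}: if $x\notin G_1$ and $L'\le G_1\cap G_1^x$, then $G_1\cap G_1^x$ is a proper intersection containing $L'$, and since the numbers $|Z(S_1)|=s$, $|K_1|=s-1$, $|A_{1_1}|=s+2r_1+1$, $|A_{2_1}|=s-2r_1+1$ are pairwise coprime, the list in Proposition~\ref{occur} forces $G_1\cap G_1^x=L'$ for $L\in\{K_1,A_{1_1},A_{2_1}\}$, while for $L=Z(S_1)$ it forces $G_1\cap G_1^x$ to be $L'$ or the Sylow $2$-subgroup of $G_1$ whose centre is $L'$ (an elementary abelian subgroup of order $s$ of a Suzuki $2$-group of order $s^2$ consists entirely of involutions, hence coincides with the centre). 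In every case $G_1\cap G_1^x$ contains exactly one $G$-conjugate of $L$, which yields both the identification and the disjointness.

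The second step is to count the number $c_L$ of $G$-conjugates of $L$ lying in $G_1$. The $G$-conjugates of $Z(S_1)$ inside $G_1$ are exactly the centres of the Sylow $2$-subgroups of $G_1$, so $c_{Z(S_1)}$ is the number of those, namely $s^2+1$. In $G_1=Sz(s)$ all cyclic subgroups of a given order $s-1$ or $s\pm 2r_1+1$ are conjugate (and a $G$-conjugate of such a subgroup lying in $G_1$ is automatically $G_1$-conjugate to it), so, using that $N_{G_1}(K_1)$ is dihedral of order $2(s-1)$, that $N_{G_1}(A_{i_1})$ is Frobenius of order $4\,|A_{i_1}|$, and the identity $(s+2r_1+1)(s-2r_1+1)=s^2+1$, one obtains $c_{K_1}=(s^2+1)s^2/2$, $c_{A_{1_1}}=(s-2r_1+1)s^2(s-1)/4$ and $c_{A_{2_1}}=(s+2r_1+1)s^2(s-1)/4$.

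The third step computes $|\mathcal X(L')|$ for a fixed $L'\le G_1$. Replacing $x$ by $x^{-1}$ turns the condition $L'\le G_1^x$ into $L'^{x^{-1}}\le G_1$, and $\{w\in G:L'^w\le G_1\}$ is a union of $c_L$ cosets of $N_G(L')$, so $|\{x\in G:L'\le G_1^x\}|=c_L\,|N_G(L)|$; the elements with $x\in G_1$ make up all of $G_1$ (then $G_1^x=G_1\supseteq L'$), hence $|\mathcal X(L')|=c_L\,|N_G(L)|-|G_1|$. I therefore need $|G_1|=(s^2+1)s^2(s-1)$ and the $G$-normalizer orders $|N_G(Z(S_1))|=q^2(s-1)$ (this is the description $N_G(Z(S_1))=S\tilde K_1$ already used in the proof of Corollary~\ref{dcS_1}), $|N_G(K_1)|=2(q-1)$, and $|N_G(A_{i_1})|$. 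For the latter two: by Theorem~\ref{info}, $C_G(K_1)$ is the cyclic subgroup of order $q-1$ containing $K_1$ and $C_G(A_{i_1})$ is the cyclic subgroup of order $|A_1|$ or $|A_2|$ containing $A_{i_1}$; the dihedral group $N_G(H)$ (order $2(q-1)$) and the Frobenius groups $N_G(A_i)$ (order $4|A_i|$) normalize $K_1$ and $A_{i_1}$ (characteristic subgroups of cyclic groups) and are maximal in $G$, so they are the full normalizers. Putting the pieces together, part (a) gives size $c_{Z(S_1)}\bigl(c_{Z(S_1)}q^2(s-1)-|G_1|\bigr)=(s-1)(q^2-s^2)(s^2+1)^2$, part (b) gives $c_{K_1}\bigl(2(q-1)c_{K_1}-|G_1|\bigr)=(q-s)s^4(s^2+1)^2/2$, and part (c), with $|N_G(A_{1_1})|=4|A_1|=4(q+2r+1)$, gives $c_{A_{1_1}}\bigl(4(q+2r+1)c_{A_{1_1}}-|G_1|\bigr)=\frac{s^4(s-1)^2}{4}(s-2r_1+1)^2\bigl((q+2r+1)-(s+2r_1+1)\bigr)=\frac{s^4(s-1)^2}{4}(s-2r_1+1)^2(q-s+2(r-r_1))$, and symmetrically for $A_{2_1}$. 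Since the unions are disjoint these are exact values, so the upper bounds asserted in (a) and (b) hold a fortiori.

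The step I expect to be the real obstacle is the last input for part (c): deciding, for each of $A_{1_1}$ and $A_{2_1}$, which $G$-class of maximal cyclic subgroups of odd order it meets, i.e.\ whether $|N_G(A_{1_1})|$ is $4(q+2r+1)$ or $4(q-2r+1)$ (and likewise for $A_{2_1}$). Since $|A_{i_1}|$ is odd and coprime to $q-1$, each $A_{i_1}$ does lie in a conjugate of $A_1$ or of $A_2$, but which one depends on the residue of $t$ modulo $8$: the stated formulas correspond to $A_{1_1}\le A_1$, $A_{2_1}\le A_2$ (equivalently $s\pm 2r_1+1\mid q\pm 2r+1$ with matching signs), which one should verify for the relevant $t$, interchanging the two expressions in (c) if the classes swap. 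Everything else is the routine arithmetic indicated above.
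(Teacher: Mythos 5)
Your counting scheme is the same as the paper's: for each $G$-conjugate $L'$ of $L\in\{Z(S_1),K_1,A_{1_1},A_{2_1}\}$ lying in $G_1$, the paper also counts the cosets $nG_1$ with $n\in N_G(L')\setminus G_1$ and multiplies by the number $|G_1:N_{G_1}(L)|$ of such $L'$; your expression $c_L\bigl(c_L|N_G(L)|-|G_1|\bigr)$ is algebraically identical to the paper's $\bigl(|N_G(L):N_{G_1}(L)|-1\bigr)\,|G_1|\,|G_1:N_{G_1}(L)|$, and your normalizer orders $q^2(s-1)$ and $2(q-1)$ are the ones the paper uses. So for parts a) and b) the proposal is correct and matches the paper; your additional observations (disjointness via Proposition \ref{occur} and coprimality, hence exactness) are sound refinements the paper does not spell out.

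The loose end you flag in part c) is real, and the paper does not address it: its proof of c) is literally ``similar to a) and b)''. The sign pattern does depend on $t$: for $s=8$, $q=s^3=512$ one has $s+2r_1+1=13$ dividing $q-2r+1=481=13\cdot 37$, so $A_{1_1}$ lies in a conjugate of $A_2$, not of $A_1$ (and likewise the classes swap for $t=5$, but not for $t=7$). Note, however, that your proposed remedy of ``interchanging the two expressions'' is not the correct fix: when $A_{1_1}\le A_2^g$ the normalizer has order $4(q-2r+1)$ while $|A_{1_1}|=s+2r_1+1$, so the bracket becomes $(q-2r+1)-(s+2r_1+1)=q-s-2(r+r_1)$ (e.g.\ $468$ for $s=8$, $q=512$), which is neither of the two printed brackets $q-s\pm 2(r-r_1)$; the prefactor $(s-2r_1+1)^2$ is unchanged. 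Since Proposition \ref{one} only needs upper bounds, and all four candidate expressions are dominated by the crude bound used there, this does not affect the application; but as an exact statement, part c) as printed (and as you derived it under the assumption $A_{1_1}\le A_1^g$) is only valid for those $t$ with $s+2r_1+1\mid q+2r+1$.
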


\begin{proof}
\begin{itemize}
\item[a)]
If the intersection $G_1\cap {G_1}^x$ contains $Z(S_1)$ then there must be
$S_2\in Syl_2(G_1)$ such that  $S_2^x=S_1$ and so $Z(S_2)^x=Z(S_1)$.
By Sylow's theorem there exists an element $g_1\in G_1$
such that $Z(S_1)^{g_1}=Z(S_2)$ and so $Z(S_1)^{g_1x}=Z(S_1)$. Thus such an element $x$ can
be chosen from cosets of $G_1$ represented by some element of $N_G(Z(S_1))$. 
Two such cosets are different if the representing elements
are in different cosets of $N_{G_1}(Z(S_1))$.
Since $Z(S_1)^u$ must be a subgroup of $G_1$, the conjugating element can be chosen
from $G_1$.
Thus the set $\{ x\in G| G_1>G_1\cap G_1\geq Z(S_1)^u,$ $u \in G \}$
has at most $(|N_G(Z(S_1)):N_{G_1}(Z(S_1))|-1)|G_1||G_1:N_{G_1}(Z(S_1))|$ elements.
We have seen in the proof of Prop.~\ref{G_1capG_1^x}  that $|N_G(Z(S_1))|=(s-1)q^2, |N_{G_1}(Z(S_1))|=(s-1)s^2$, and the above product
is $({{(s-1)q^2}\over {(s-1)s^2}}-1)s^2(s^2+1)(s-1)(s^2+1)$. This is exactly what was stated.
\item [b)]
By similar arguments as in part a), we have to calculate the number
$|N_G(K_1):N_{G_1}(K_1)|-1)|G_1||G_1:N_{G_1}(K_1)|=({{2(q-1)}\over {2(s-1)}}-1)s^2(s^2+1)(s-1)s^2(s^2+1)/2$. This is exactly what  was stated.
\item [c)]
Similar to the proofs of parts a) and b).
\end{itemize}
\end{proof}

\begin{proposition}\label{one} We use notations of Prop \ref{occur}.
There exists an element $x\in G$ such that $G_1\cap G_1^x=1$.
\end{proposition}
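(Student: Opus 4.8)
The plan is to prove existence by a counting argument: I will bound the number of $x\in G$ for which $G_1\cap G_1^x\neq 1$ and show that this number is strictly smaller than $|G|=q^2(q^2+1)(q-1)$, so that some $x$ must satisfy $G_1\cap G_1^x=1$. By Propositions \ref{S_1cap S_1^x}, \ref{G_1capG_1^x} and \ref{occur}, every proper nontrivial subgroup of $G_1$ of the form $G_1\cap G_1^x$ is $G$-conjugate to one of $Z(S_1)$, $S_1$, $K_1$, $A_{1_1}$, $A_{2_1}$, while $G_1\cap G_1^x=G_1$ holds precisely when $x\in N_G(G_1)=G_1$. A $G$-conjugate of $Z(S_1)$ lies in the corresponding conjugate of $S_1$ and in no conjugate of $K_1$, $A_{1_1}$ or $A_{2_1}$ (these having odd order), so the set $\{x\in G:G_1\cap G_1^x\neq 1\}$ is contained in the union of $N_G(G_1)$ with the four sets occurring in Proposition \ref{Z(S_1)}(a),(b),(c). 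Hence, writing $\Sigma$ for the number of $x\in G$ with $G_1\cap G_1^x\neq 1$, Proposition \ref{Z(S_1)} together with $|N_G(G_1)|=|G_1|=s^2(s^2+1)(s-1)$ yields
\begin{align*}
\Sigma &\leq (s-1)(q^2-s^2)(s^2+1)^2 + \frac{(q-s)\,s^4(s^2+1)^2}{2} + s^2(s^2+1)(s-1)\\
&\quad + \frac{s^4(s-1)^2}{4}\Bigl[(s-2r_1+1)^2\bigl(q-s+2(r-r_1)\bigr)+(s+2r_1+1)^2\bigl(q-s-2(r-r_1)\bigr)\Bigr].
\end{align*}

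It then remains to check $\Sigma<|G|$. Since $q=s^t$ and both $q$ and $s$ are odd powers of $2$, the exponent $t$ must be odd, so $t\geq 3$ and $q\geq s^3$; also $s\geq 8$ by Theorem \ref{Suzuki}. Using the crude estimates $s^2+1\leq 2s^2$, $s-1<s$, $q-s<q$, $2r_1+1<s$ and $2(r-r_1)<2r<q$ (all valid once $s\geq 8$), each of the first four summands above is at most a small explicit constant times $s^5q^2$ or $s^8q$; replacing $s^8q\leq s^5q^2$ by means of $s^3\leq q$ gives $\Sigma<10\,s^5q^2$. Finally $10\,s^5q^2<q^2(q^2+1)(q-1)$ is equivalent to $10\,s^5<(q^2+1)(q-1)$, which holds because $(q^2+1)(q-1)>q^3-q^2\geq q^3/2\geq s^9/2$ while $s^9/2>10\,s^5$ (the latter since $s^4\geq 8^4>20$). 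Therefore $\Sigma<|G|$, and some $x\in G$ must satisfy $G_1\cap G_1^x=1$.

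I do not expect a genuine obstacle here: the classification of the intersections $G_1\cap G_1^x$ and the cardinalities of the relevant sets are already supplied by Propositions \ref{occur} and \ref{Z(S_1)}, so the proof is essentially bookkeeping plus an elementary inequality. The one point requiring care is the assertion that the four sets of Proposition \ref{Z(S_1)} together with $N_G(G_1)$ exhaust $\{x:G_1\cap G_1^x\neq 1\}$ — this is exactly the content of Proposition \ref{occur}, and overlaps among these sets are harmless since only an upper bound is needed. The final inequality is comfortable, as $\Sigma=O(s^5q^2)$, which is $O(q^{11/3})$ because $s\leq q^{1/3}$, whereas $|G|=\Theta(q^5)$.
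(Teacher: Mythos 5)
Your proposal is correct and follows essentially the same route as the paper: both bound the number of $x$ with $G_1\cap G_1^x\neq 1$ by summing the cardinalities from Proposition \ref{Z(S_1)} together with $|N_G(G_1)|=|G_1|$, and then verify that this total is less than $|G|$ (the paper checks the resulting inequality by splitting into the cases $t=3$ and $t\geq 5$, while you use the uniform estimate $\Sigma=O(s^5q^2)$ with $q\geq s^3$; the tiny slack in your constant $10$ versus the crudest bound is immaterial given how much room the final inequality has).
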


\begin{proof} We  have to show that the complement set of those elements $x\in G$ for
which $G_1\cap G_1^x\ne 1$ is nonempty. We use the results of the previous propositions.
We note that  $q=s^t$. We have to prove that
$s^{2t}(s^t-1)(s^{2t}+1)> (s-1)(s^2+1)^2(s^{2t}-s^2)+s^4(s^2+1)^2(s^t-s)/2+
s^4(s-1)^2(s-2r_1+1)^2(s^t-s+2(r-r_1))/4+s^4(s-1)^2(s+2r_1+1)^2(s^t-s-2(r-r_1))/4+|G_1|$.
Since $s\geq 8$, if $t=3$ then the first part of the right hand side is bigger than
than any other part on the right hand side. The left hand side is bigger than the first part of the 
right hand side  multiplied by $5$, so we are done. If $t\geq 5$ then
the  $s^4$ times of the first part of the right hand side is  bigger than any other part on the right hand side, so it is enough
to prove that
$s^{2t}(s^t-1)(s^{2t}+1)> 5s^4(s-1)(s^2+1)^2(s^{2t}-s^2)$ which is obviously true.
So we are done.
\end{proof}

\section {The ordinary depth   of subgroups of $Sz(q)$}

As a consequece of the results of the previous section we have the following: 

\begin{corollary}
The ordinary depth of all subgroups of $G=Sz(q)$ mentioned in 
Theorem \ref{main} 
is $3$ except for $d(N_G(F),G)$ which is $5$.
\end{corollary}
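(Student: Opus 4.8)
The plan is to combine the combinatorial-depth computations of Section~4 with the comparison results quoted in Section~1 and with the distance/intersection criteria for ordinary depth. First I would recall two general facts already stated in the excerpt: by \cite[Remark~4.5]{BDK} one always has $d(H,G)\le d_c(H,G)$, and by Theorem~\ref{inter} if $\operatorname{Core}_G(H)$ is the intersection of $m$ conjugates of $H$ then $d(H,G)\le 2m$, with $d(H,G)\le 2m-1$ when that core lies in $Z(G)$. Since $G=Sz(q)$ is simple, $Z(G)=1$, so for every subgroup $H$ with trivial core we get $d(H,G)\le 2\delta_*(H)-1$, where $\delta_*(H)$ is the minimal number of conjugates whose intersection is $\operatorname{Core}_G(H)=1$. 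Also, $d(H,G)\le 2$ exactly when $H\trianglelefteq G$, which never happens for the proper subgroups listed in Theorem~\ref{main}; hence $d(H,G)\ge 3$ for all of them. So the only work is to prove the matching upper bound $d(H,G)\le 3$ in every case except $N_G(F)$.

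Next I would run through the subgroups of Theorem~\ref{main} one family at a time. For the $TI$-subgroups — $F$ and its characteristic subgroups, subgroups of $H$, of $A_1$, of $A_2$, subgroups of order $2$, cyclic subgroups of order $4$, and $S_1\in Syl_2(Sz(s))$ — Remark~\ref{TI} (together with the propositions of Section~4) gives $\delta_*=2$, so Theorem~\ref{inter} immediately yields $d(H,G)\le 2\cdot 2-1=3$, hence $d(H,G)=3$. For the maximal subgroups $B_0=N_G(H)$, $B_1=N_G(A_1)$, $B_2=N_G(A_2)$ and the Suzuki subgroups $Sz(s)$, the Section~4 propositions show $\delta_*=2$ as well (the core is trivial and two suitably chosen conjugates already intersect trivially — this is exactly the first half of each of those proofs, where an element $y_1$ with $H\cap H^{y_1}=1$ is produced by a counting argument), so again $d(H,G)\le 3$, and combined with $d(H,G)\ge 3$ we get equality. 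For the Klein four subgroups $K_4$ and the subgroups $L\le Z(F)$ of order $2^{f-1}$ one cannot use $d_c$ directly since those combinatorial depths are $4$ and $2f-2$; instead I would note $\delta_*(K_4)=2$ and conclude $d(K_4,G)\le 3$, and likewise for $L$ one uses the character-distance criterion of Theorem~\ref{distance}(i): since $L$ is a $2$-group contained in the $TI$-set $Z(F)$ and its $G$-conjugates already separate characters at distance $1$ (any two irreducible characters of the abelian group $L$ become related through a single suitable $\chi\in\irr(G)$, using the structure of the character table of $Sz(q)$), we get $d(L,G)=3$.

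Finally, for $N_G(F)$ I would show $d(N_G(F),G)=5$. The upper bound $d(N_G(F),G)\le d_c(N_G(F),G)=5$ comes from \cite[Remark~4.5]{BDK} together with the proposition computing $d_c(N_G(F),G)=5$. For the lower bound $d(N_G(F),G)\ge 5$ — equivalently $d(N_G(F),G)>4$, i.e. by Theorem~\ref{distance}(ii) there must exist some $\chi\in\irr(G)$ with $m(\chi)\ge 2$ — I would argue at the level of the inclusion matrix / character distances: some irreducible character of $N_G(F)=FH$ (for instance a nontrivial linear character of the Frobenius complement $H$ inflated to $FH$, or one of the characters coming from $F$) is at distance exactly $2$ from the trivial character, because the characters of $G$ restrict to $FH$ in a way that no single $\chi\in\irr(G)$ simultaneously contains the relevant pair, but a length-two chain does; this reflects the fact that $\delta(N_G(F))=3$ while $\delta_*=3$ as well, and the core is trivial.

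The main obstacle is the lower bound $d(N_G(F),G)\ge 5$: unlike all the other cases, it is not handed to us by a $TI$/core argument, and it requires genuine information about $\irr(G)$ and $\irr(FH)$ and the fusion of characters, i.e. one must actually locate a pair of irreducible characters of $FH$ at character-distance $2$ (equivalently exhibit $\chi\in\irr(G)$ with $m(\chi)=2$). Everything else is a routine bookkeeping pass over Theorem~\ref{main} applying $d\le d_c$ and $d\le 2\delta_*-1$ together with $d\ge 3$ for non-normal subgroups.
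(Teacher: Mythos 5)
Your overall architecture matches the paper's: $d\ge 3$ for non-normal subgroups of the simple group $G$; upper bounds via $d\le d_c$ and via Theorem~\ref{inter} with $\delta_*=2$ and trivial core $\le Z(G)=1$; and $d(N_G(F),G)\le 5$ from $d\le d_c=5$. However, the one step that is not routine bookkeeping --- the lower bound $d(N_G(F),G)\ge 5$ --- is exactly the step you leave unproved. You correctly reduce it, via Theorem~\ref{distance}(ii), to exhibiting some $\chi\in\irr(G)$ with $m(\chi)\ge 2$, and you even name a plausible witness (a nontrivial character of $N_G(F)=FH$ with $F$ in its kernel), but you only assert, without justification, that no single $\chi\in\irr(G)$ relates it to $1_{FH}$. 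The paper closes this by taking $\chi=1_G$ (so that the only constituent of $\chi_{N_G(F)}$ is $1_{N_G(F)}$, and $m(1_G)=\max_{\alpha}d(\alpha,1_{N_G(F)})$) and invoking the explicit decompositions of the restrictions of $\irr(Sz(q))$ to $FH$ from Huppert--Blackburn, Ch.~XI, Thms.~5.9 and~5.10, to get $d(\psi,1_{N_G(F)})=2$ for a nontrivial $\psi$ with $F\le\ker\psi$, whence $m(1_G)\ge 2$ and $d(N_G(F),G)>4$. Without citing or reproving those character-theoretic facts, your argument establishes only $3\le d(N_G(F),G)\le 5$; as you yourself note, this is the main obstacle, and your proposal does not overcome it.

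A smaller point: for $L\le Z(F)$ of order $2^{f-1}$ you switch to an unsubstantiated character-distance argument (``any two irreducible characters of $L$ become related through a single suitable $\chi$''), which is both unnecessary and unproved as stated. The value $\delta_*(L)=f$ appearing in the proof of Proposition~\ref{F} is computed relative to conjugation inside $N_G(F)$; in $G$ itself $F$ is a $TI$-set, so $L\cap L^{x}=1$ for any $x\notin N_G(F)$, giving $\delta_*(L)=2$ in $G$ and hence $d(L,G)\le 3$ by Theorem~\ref{inter}, exactly as you argue for $K_4$. This is how the paper handles all the $2$-subgroups of part~c) uniformly, and you should do the same rather than appeal to the character table.
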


\begin{proof}
 Since $G$ is simple, the depth
of each nontrivial subgroup is at least $3$.
Since for subgroups $B_0,B_1,B_2$ and for proper Suzuki subgroups $Sz(s)$
there exist  conjugates, which intersect them trivially, these subgroups have
ordinary depth $3$ by Theorem \ref{inter}.
The subgroups in  part  b) of Theorem \ref{main}  have combinatorial depth $3$, so they also have ordinary depth $3$. 
Since $F$ is $TI$,  so by Theorem \ref{inter},
 the ordinary depth of each $2$-subgroup (including those in part c)) is  $3$. 
The maximal subgroup $N_G(F)$ has ordinary depth at most $5$ since $d(N_G(F),G)\leq d_c(N_G(F),G)$.
If $d(N_G(F),G)\leq 4$ then by Theorem \ref{distance}, $m(\chi)\leq 1$ for
each $\chi\in {\irr}(G)$. However $m(1_G)\geq 2$, since $1_{N_G(F)}$ is irreducible
and if $\psi\in {\irr}(N_G(F))$ containing $F$ in its kernel, then
$d(\psi,1_{N_G(F)})=2$.
 Here we used \cite[Thm 5.9, Thm 5.10 in Ch XI]{H-Bl_III}.
 Thus $d(N_G(F),G)=5$.

\end{proof}
\noindent

{\bf Acknowledgements}
Research supported by National Scientific Research Grant No. K77476 and 
T\'AMOP-4.2.2.B-10/1-2010-0009.
    The authors thank Thomas Breuer for several useful suggestions and comments.\\

%%%%%%%%%%%%%%%%%%%%%%%%%%%%%%%%%%%%%%%%%%%%%%%%%%%%%%%%%%%%%%%%%%%

\bibliographystyle{amsalpha}

%\begin{thebibliography}
%\bibliography{depth}
%\end{thebibliography}
\end{document}